\theoremstyle{plain}
\newtheorem{theorem}{Theorem}[section]
\newtheorem{lemma}{Lemma}[section]
\newtheorem{corollary}{Corollary}[section]
\newtheorem{proposition}{Proposition}[section]
\theoremstyle{remark}
\newtheorem{definition}{Definition}
\newtheorem{remark}{Remark}
\newcommand{\dint}{\displaystyle\int}
\newcommand{\suml}{\sum\limits}
\newcommand{\maxl}{\max\limits}
\newcommand{\minl}{\min\limits}
\newcommand{\supl}{\sup\limits}
\newcommand{\df}{\mathrm{d}}
\newcommand{\dt}{\mathrm{d}t}
\newcommand{\bs}{\boldsymbol}
\renewcommand{\(}{\left(}
\renewcommand{\)}{\right)}
\newcommand{\Ep}{{\mathbb{E}}}
\renewcommand{\Pr}{{\mathbb{P}}}
\newcommand{\RR}{\mathbb{R}}
\newcommand{\les}{\lesssim}
\newcommand*\supercite[1]{\textsuperscript{\cite{#1}}}
\newcommand{\rulex}{\hfill\rule{1mm}{3mm}}
\DeclareMathOperator{\Var}{Var}
\newcommand{\s}{\mathfrak{s}}
\begin{document}
	
	\begin{center}
		{\bf\Large Gaussian Multiplier Bootstrap Procedure for the $k$th Largest Coordinate of High-Dimensional Statistics}
		\vspace{3mm}
		
		Yixi Ding$^{1,2}$, Qizhai Li$^{1,2}$, Yuke Shi$^{3}$, Liuquan Sun$^{1,2}$, Luobin Zhang$^{1,2}$
		
		$^1$ State Key Laboratory of Mathematical Sciences, Academy of Mathematics and Systems Science, Chinese Academy of Sciences, Beijing 100190, China
		
		$^2$ School of Mathematical Science, University of Chinese Academy of Sciences, Beijing 101408, China
		
		$^3$ Department of Information and Computing Science, University of Science and Technology Beijing, Beijing, China
		
	\end{center}

	\author{Yixi Ding, Qizhai Li, Yuke Shi, Liuquan Sun, Luobin Zhang}

		\noindent{\textbf{Abstract}} ~ We consider the problem of Gaussian multiplier bootstrap procedures for the $k$th largest statistics and functions of the top $k$ order statistics, which are  commonly encountered in high-dimensional statistical inference.
		Such a problem  has been studied previously for $k=1$ (i.e., maxima). However, in many applications, a general $k$ ($k\geq 1$) is of great interest.
		We provide the upper bounds for the errors between Gaussian approximations and Gaussian multiplier approximations.
		The dimension $p$  is allowed to be larger than the sample size $n$. The effectiveness of the proposed methods is demonstrated  via the computer numerical results and a real-world data analysis.
		
		\noindent{\textbf{Keywords}} ~ Bootstrap, Gaussian multiplier, top $k$ coordinates.
	
	\section{Introduction}
	
	High dimensional hypothesis testing problems are commonly encountered in statistical theories and applications, for example,  testing whether the coefficients in linear regression\supercite{Belloni2014,Gao2022,Guo2022}   and generalized linear regression\supercite{Zhang2017, Ning2017, Li2024}
	are zero.
	The distributions or asymptotic distributions of test statistics are mainly derived based on the sparsity assumption of the covariance matrix.
	However, if this postulation is violated, obtaining their distributions or asymptotic distributions is extremely challenging.
	Gaussian multiplier bootstrap methods\supercite{GAR,Chen2020,Lopes2020} provide accurate approximations for the maximum of sums of high-dimensional independent random vectors.
	Whether this validity still holds for the $k$th largest order statistics and the functions of top $k$ order statistics is still unclear.
	We point out that taking the combination of top $k$ order statistics is very common in practice since only one signal in high dimensional problems is highly improbable.

	Let $\bs{X}_{1},\ldots,\bs{X}_n$ be $n$ independent random vectors in $\mathbb{R}^p$  with $\bs{X}_{i} = (X_{i1},\ldots,X_{ip})^{\rm T}$ for $i=1,\ldots,n$, where the superscript ${\rm T}$
	indicates the transpose of a matrix or a vector.
	Assume each unit in $\boldsymbol X_i $ is centered and has a finite variance, that is, $ \mathbb{E}[X_{ij}] = 0 $, and $ \mathbb{E}[X_{ij}^2] < \infty $ for $ i = 1,\ldots, n $ and $ j = 1, \ldots, p $, where $\mathbb E$ denotes the operator of expectation.
	Denote $\boldsymbol T = (T_1, \ldots, T_p)^{\rm T} = \frac{1}{\sqrt{n}} \sum_{i=1}^n \boldsymbol X_i$.
	We rank $T_1,\ldots,T_p$ and $|T_1|,\ldots,|T_p|$ separately in a decreasing order, and denote the order statistics by $T_{(1)},\ldots,T_{(p)}$ and $T_{[1]},\ldots,T_{[p]}$, respectively.
	Thus $T_{(1)}\ge\cdots\ge T_{(p)}$ and $T_{[1]}\geq\cdots\geq T_{[p]}$.
	Let $\psi$ be a $k$-dimensional measurable function defined in $\mathbb R^k$ for a  fixed positive integer $k$. We are interested in the asymptotic distribution of $T_{(k)}$, $T_{[k]}$ and $T_\psi$ with $T_\psi=\psi(T_{[1]},\ldots,T_{[k]})$, when both $n$ and $p$ go to infinity.
	
	When the independent and identically distributed ({\it iid}) observations are sampled from an exact distribution, Fisher and Tippett\supercite{Fisher1928} obtained the extreme value distribution of the maximal order statistic.
	Mu\supercite{Mu1966} gave the asymptotic normal distribution of the intermediate order statistic under some conditions.
	Watts, Rootzen, and Leadbetter\supercite{Watts1982} established the asymptotic normal distribution of an intermediate order statistic from a stationary dependent sequence.
	As for the asymptotic distribution,  Chernozhukov, Chetverikov, and Kato\supercite{GAR} gave the Gaussian multiplier bootstrap bounds for the maximum of sums of random vectors.
	Their subsequent work\supercite{CCK17,ICL}  extended these results.
	Lopes, Lin, and Müller\supercite{Lopes2020} studied the convergence rate of Gaussian multiplier bootstrap approximation by considering some extra conditions.
	Chen and Kato\supercite{Chen2020} applied Gaussian multiplier bootstrap to approximate the supremum of a nondegenerate U-process.
	
	We study Gaussian and Gaussian multiplier bootstrap approximations for $T_{(k)}$, $T_{[k]}$ and $T_\psi$ with fixed $k$.
	The upper bounds of the Kolmogorov-Smirnov distances between the asymptotic distributions of $T_{(k)}$, $T_{[k]}$, and $T_\psi$ and   their  Gaussian multiplier bootstrap analogs are given.
	The resulting bounds depend on $p$ only through logarithmic factors, allowing $p$ to be much larger than $n$ under the stated moment conditions.

	We list some notations. For  a  function $W(\cdot): \mathbb{R}^p\to \mathbb{R}$,  let $\partial_{j} W (\boldsymbol{x}) = \partial W(\boldsymbol{x})/ \partial x_{j}$ be the partial derivative of $W(\cdot)$ with respect to $x_j$, $j=1,\ldots,p$, where $\boldsymbol{x}= (x_{1},\ldots,x_{p})^{\top}\in{\mathbb R}^p$.
	For a function $w(t)$ defined on $\mathbb R$, let $w^{(j)}(t)=\partial^j w(t)=\partial^j w(t)/\partial t^j$ for $t\in \mathbb{R}$, where $j$ is a positive integer.
	We use ${\mathbb C}_{m}(\mathbb{R})$ to denote the class of functions $\tilde \psi$ satisfying $\sup_{t\in \mathbb{R}} | \partial^{j} \tilde \psi(t) | < \infty$ for $j=0,\ldots,m$.
	For two positive constants $a$ and $b$, we write $a \lesssim b$ if $a$ is smaller than or equal to $b$ up to a universal positive
	constant, and $a\gtrsim b$ vice versa. Denote $a \vee b = \max \{ a,b \}$, $a\wedge b=\min \{ a,b \} $, and $a_+=\max\{a,0\}=a\vee 0$.
	For a random variable $T$ and constant $\gamma\in(0,1)$, the $\gamma$th quantile of $T$ is defined as  $\inf\{t\in\mathbb R\colon  \mathbb P(T\leq t) \ge \gamma \}$, where $\mathbb P$ denotes the operator of taking probability.
	Let $\|T\|_{\rm e} = \inf\left\{\lambda > 0 : \mathbb{E} \left[\exp(|T|/\lambda)\right] \leq 2\right\},$	and  $\mathds{1}\{\cdot\}$ be an indicator function. For $\boldsymbol v=(v_1, \ldots, v_p)^{\rm T} \in \mathbb{R}^p$,
	let $\|\boldsymbol v\|_0 = \sum_{j=1}^p \mathds{1}\{v_j\neq 0\},$ $\|\boldsymbol v\|_2 = \big(\sum_{j=1}^p v_j^2\big)^{1/2}$, and $\|\boldsymbol v\|_\infty = \max_{j }  |v_j|$.
	We rank $v_1,\ldots,v_p$ in a decreasing order and denote them by $v_{(1)},\ldots, v_{(p)}$.
	For two vectors $\boldsymbol v_1,\boldsymbol v_2\in\mathbb R^p$ and $c\in\mathbb R$, we write $\boldsymbol v_1 \leq \boldsymbol v_2$ if each unit in $\boldsymbol v_1$  is not larger than the corresponding one in $ \boldsymbol v_2$, and  $\boldsymbol v_1+ c$  the vector obtained by adding $c$ to each unit of  $\boldsymbol v_1 $.


	\section{Smooth function}\label{S2-SAFSK}

	Assume $k\ll p/2$ and, for $\bs{x}=(x_1,\ldots,x_p)^{\rm T}\in\mathbb R^p$, define $\mathfrak{s}_k(\bs{x})=x_{(k)}$.
	The map $\mathfrak{s}_k$ and the indicator functions entering distribution functions are nonsmooth.
	The comparison arguments therefore use smooth approximations to both objects.
	Let $\beta>0$ be a smoothing parameter.
	For $k=1$, the standard log-sum-exp approximation\supercite{GAR} is $\mathfrak{f}_{p}(\bs{x})=\beta^{-1}\ln\{\sum_{j=1}^{p}\exp(\beta x_{j})\}$.
	For $k>1$, we use the smooth approximation of Liu and Xie\supercite{AEG}.
	Let $\mathscr{A}_k=\{\{s_1,\ldots,s_k\}:1\le s_1<\cdots<s_k\le p\}$ be the collection of all $k$-subsets of $\{1,\ldots,p\}$.
	Denote $\mathscr{A}_k(j)=\{A:j\in A,A\in\mathscr{A}_k\}$.
	For $A\in\mathscr{A}_k$, define
	\begin{align*}
		\mathfrak{q}_k(\bs{x})=\frac{1}{\beta}\ln \bigg( \sum_{A\in \mathscr{A}_k}\exp\big(\beta g_{k,A} (\bs{x})\big)\bigg),
	\end{align*}
	where $g_{k,A}(\bs{x})=-\beta^{-1}\ln\{ \sum_{s\in A}\exp(-\beta x_s)\} +\beta^{-1}\ln k$ and $\tilde{g}_k(\bs{x})=\sum_{A\in\mathscr{A}_k}\exp\{\beta g_{k,A}(\bs{x})\}$.
	Then
	\begin{equation} \label{eq: f properties}
		0\le \mathfrak{q}_k(\bs{x})-\mathfrak{s}_k(\bs{x})\le \frac{\ln k+\ln \binom{p}{k}}{\beta}\le \beta^{-1}k\ln p.
	\end{equation}
	Thus $\mathfrak{q}_k$ is a smooth upper approximation to $\mathfrak{s}_k$ with approximation error controlled by $\beta^{-1}k\ln p$.

	Since $	\mathbb P(T_{(k)}<t)=\mathbb E[\mathds{1}\{T_{(k)}<t\}]$, we  introduce a smooth function $g$ to approximate $\mathds{1}\{\cdot\}$.
	Let $g_0\colon\mathbb R\to\mathbb R$ be fifth order continuously differentiable and a decreasing function satisfying
	(i) $g_0(t)\geq 0$ for  $t\in\mathbb R$,
	(ii) $g_0(t) = 0$ for  $t\geq 1$, and
	(iii) $g_0(t) = 1$ for  $t\leq 0$.
	For $g_0$, there exists a constant $C_g>0$ such that
	$$
	\sup_{t\in\mathbb R}\Big(|g_0^{(1)}(t)|\vee|g_0^{(2)}(t)|\vee|g_0^{(3)}(t)|\vee|g_0^{(4)}(t)|\vee|g_0^{(5)}(t)|\Big) \leq C_g.
	$$
	Let $\phi \geq 1$,
	$	\beta = \phi\ln p$ and   $g(t) = g_0(k^{-1}\phi t)$ for  $t\in\mathbb R$.
	It follows that
	\begin{equation}
		g(t)=\begin{cases}
			1, & \text{if }t\leq0,\\
			0, & \text{if }t\geq k\phi^{-1}.
		\end{cases}\label{eq: g property}
	\end{equation}
	
	For fixed $\bs{y}\in\mathbb R^p$, we define the function  
	\begin{align}\label{def: m}
		\mathfrak{m}:=\mathfrak{m}(\bs{w},\bs{y}) = g\big(\mathfrak{q}_k(\bs{w} - \bs{y})\big), \ \text{for  }\bs{w}\in\mathbb R^p.
	\end{align}
	Then $\mathfrak{m}$ has up to the fifth order derivatives. For brevity of notation, we will use indices to denote these derivatives. For example, for $j_1,j_2,j_3,j_4,j_5=1,\ldots,p$, we write
	$$
	\mathfrak{m}_{j_1j_2j_3j_4j_5}(\bs{w},\bs y)=\frac{\partial^5	\mathfrak{m} (\bs{w},\bs y)}{\partial w_{j_1} \partial w_{j_2} \partial w_{j_3} \partial w_{j_4} \partial w_{j_5}}, \ \bs{w}\in\mathbb R^p.
	$$
	Let  the cardinality of $\mathscr{A}_k$ be $N=|\mathscr{A}_k|=\binom{p}{k}$, $\mathscr{A}_{k}=\{A_1,\ldots, A_N\},$
	and
	$\mathbf{H}:=\mathbf{H}(\bs{w},\bs y)=(g_{k,A_1}(\bs{w}-\bs{y}),\ldots,g_{k,A_N}(\bs{w}-\bs{y}))^{\rm T}.$
	The following properties corresponding to $\mathfrak{m}=g\circ \mathfrak{f}_{N}\circ \mathbf{H}$ are obtained.

	\begin{lemma}\label{Derivatives of m}
		For  $j_1,j_2,j_3,j_4,j_5=1,\ldots,p$, there exist functions $U_{j_1j_2} $, $U_{j_1j_2j_3} $, $U_{j_1j_2j_3j_4} $, and $U_{j_1j_2j_3j_4j_5} $ such that
		\vspace{2ex}
		
		(i) For  $\bs{w}\in\mathbb R^p$,
		\begin{equation*}\label{eq: property 1 - 2 and 3}
			|\mathfrak{m}_{j_1j_2}(\bs{w},\bs{y})| \leq U_{j_1j_2}(\bs{w},\bs{y}), \quad |\mathfrak{m}_{j_1j_2j_3}(\bs{w},\bs{y})| \leq U_{j_1j_2j_3}(\bs{w},\bs{y}),
		\end{equation*}
		\begin{equation*}\label{eq: property 1 - 3 and 4}
			|\mathfrak{m}_{j_1j_2j_3j_4}(\bs{w},\bs{y})| \leq U_{j_1j_2j_3j_4}(\bs{w},\bs{y}), ~\text{and} ~ |\mathfrak{m}_{j_1j_2j_3j_4j_5}(\bs{w},\bs{y})| \leq U_{j_1j_2j_3j_4j_5}(\bs{w},\bs{y});
		\end{equation*}
		
		(ii) For  $\bs{w}\in\mathbb R^p$,
		\begin{equation*}\label{eq: property 3 - 2 and 3}
			\sum_{j_1,j_2=1}^p U_{j_1j_2}(\bs{w},\bs{y}) \lesssim k^{-1}\phi^2\ln p, \quad \sum_{j_1,j_2,j_3=1}^p U_{j_1j_2j_3}(\bs{w},\bs{y}) \lesssim k^{-1}\phi^3\ln^2p,
		\end{equation*}
		\begin{equation*}\label{eq: property 3 - 3 and 4}
			\sum_{j_1,j_2,j_3,j_4=1}^p U_{j_1j_2j_3j_4}(\bs{w},\bs{y}) \lesssim k^{-1}\phi^4\ln^3p,   ~\text{and} ~ \sum_{j_1,j_2,j_3,j_4,j_5=1}^p U_{j_1j_2j_3j_4j_5}(\bs{w},\bs{y}) \lesssim k^{-1}\phi^5\ln^4p;
		\end{equation*}	
		
		(iii) For  $\bs{z}\in\mathbb R^p$ and $\bs{w}\in\mathbb R^p$ satisfying $\beta\|\bs{w}\|_{\infty}\leq 1$,
		\begin{equation*}\label{eq: property 2 - 4 and 5}
			U_{j_1j_2j_3j_4}(\bs{z}+\bs{w},\bs{y})\lesssim U_{j_1j_2j_3j_4}(\bs{z},\bs{y}), ~\text{and} ~ U_{j_1j_2j_3j_4j_5}(\bs{z}+\bs{w},\bs{y})\lesssim U_{j_1j_2j_3j_4j_5}(\bs{z},\bs{y}).
		\end{equation*}
		
	\end{lemma}
	The derivative bounds in Lemma \ref{Derivatives of m} are the smoothing inputs used in the comparison arguments below.

	\section{Theoretical preparations }
	The theoretical preparations consist of three ingredients for the Gaussian and multiplier-bootstrap approximations of $T_{(k)}$, $T_{[k]}$ and $T_\psi$.
	Subsection~\ref{sec: main arguments} gives a distributional comparison for smooth approximations of the $k$th order statistic.
	Subsection~\ref{sec:anti-concentration} states the anti-concentration bound for Gaussian order statistics.
	Subsection~\ref{sec: stein kernels} gives Stein-kernel comparisons used to control Gaussian and bootstrap laws.

	\subsection{ Distributional approximation}\label{sec: main arguments}
	
	Let $\bs{V}_1,\ldots,\bs{V}_n, \bs{Z}_1,\ldots,\bs{Z}_n$ be a sequence of $p$-dimensional independent random vectors with $\bs{V}_i$ and $\bs{Z}_i$ having coordinates $V_{i j}$ and $Z_{i j}$ in $\mathbb R$, respectively, such that $\mathbb E V_{i j} = \mathbb E Z_{i j} = 0$ for  $i = 1,\ldots,n$ and $j = 1,\ldots,p$. Let $B_n > 1$ be a sequence of constants that may diverge to infinity as $n\to\infty$.
	Assume the following conditions.
	
	\medskip
	\noindent
	{(L1)} {\it
		$
		\frac{1}{n}\sum_{i=1}^n\mathbb E \left[V_{i j}^4 + Z_{i j}^4\right] \lesssim  B_n^2, ~~ j = 1,\ldots,p,
		$
		and
		$$
		\mathbb E \Big[\|\bs{V}_i\|_{\infty}^8 + \|\bs{Z}_i\|_{\infty}^8\Big] \lesssim    B_n^8\ln^8(p n), ~~ i = 1,\ldots,n.
		$$
		
	}
	\noindent
	{(L2)} {\it There exists a constant $C_e\geq 1$ such that for  $i = 1,\ldots,n$,
		$$
		\mathbb P \Big(\|\bs{V}_i\|_{\infty}\vee\|\bs{Z}_i\|_{\infty} > C_e B_n\ln(p n)\Big) \leq 1/n^4.
		$$
		
	}
	\medskip
	
	\noindent
	{(L3)} {\it
		There exist constants $C_a > 0$ and $\delta\geq0$ such that for  $(x,t)\in\mathbb R \otimes (0,\infty)$,
		
		\begin{align*}
			&\mathbb P\Bigg(\s_k\Big( \frac{1}{\sqrt n}\sum_{i=1}^n \bs{Z}_i\Big)  \leq x + t \Bigg) - \mathbb P\Bigg(\s_k\Big( \frac{1}{\sqrt n}\sum_{i=1}^n \bs{Z}_i \Big) \leq x\Bigg)  \leq  C_a k \Big(t\sqrt{\ln p\vee \ln (p/t)}+\delta\Big).
		\end{align*}
	}
	\noindent

	Proposition~\ref{cor: max} gives the Kolmogorov-Smirnov bound used in the distributional comparison.
	
	\begin{proposition}[Distributional approximation]\label{cor: max}
		Assume that  Conditions (L1) to (L3) hold,
		\begin{equation*}
			\max_{1\leq j_1,j_2\leq p}\left| \frac{1}{\sqrt n}\sum_{i=1}^n\big(\Ep[V_{i j_1}V_{i j_2}] - \Ep[Z_{i j_1}Z_{i j_2}]\big) \right| \lesssim B_n\sqrt{\ln(p n)}
		\end{equation*}
		and
		\begin{equation*}
			\max_{1\leq j_1,j_2,j_3\leq p}\left| \frac{1}{\sqrt n}\sum_{i=1}^n \big(\Ep[V_{i j_1}V_{i j_2}V_{i j_3}] - \Ep[Z_{i j_1}Z_{i j_2}Z_{i j_3}]\big) \right| \lesssim B_n^2\sqrt{\ln^3(p n)}.
		\end{equation*}  Then we have
		\begin{align*}
			&\sup_{x\in\mathbb R}\bigg| \mathbb P\bigg(\s_k\Big( \frac{1}{\sqrt n}\sum_{i=1}^n \bs{V}_i \Big) \leq x\bigg) - \mathbb P\bigg(\s_k\Big( \frac{1}{\sqrt n}\sum_{i=1}^n \bs{Z}_i\Big)  \leq x\bigg) \bigg|
			\\
			&\quad \lesssim \bigg\{k^2\Big(\frac{B_n^2\ln^5(p n)}{n}\Big)^{\frac{1}{4}}+k\delta\bigg\}.
		\end{align*}
	\end{proposition}
	
	\subsection{Anti-concentration inequality }
	\label{sec:anti-concentration}
	The anti-concentration bound controls the smoothing error after the distributional comparison.
	We use the L\'{e}vy concentration function of \cite{ICL}.
	
	\begin{definition}[L\'{e}vy concentration function]  For $\epsilon>0$, the \textit{L\'{e}vy concentration function} of a real valued random variable $\xi$ is defined as
		\begin{align*}
			\mathcal{L}(\xi,\epsilon):=\sup\limits_{x \in \mathbb{R}}\mathbb P(|\xi-x|\le \epsilon).
		\end{align*}
	\end{definition}
	Proposition \ref{Proposition: anticoncentration} extends Theorem 3 of \cite{CAB} to the $k$th largest Gaussian coordinate.
	\begin{proposition}[Anti-concentration]\label{Proposition: anticoncentration}
		Let $(Y_1,\ldots, Y_p)^{\rm T}$ be a centered Gaussian random vector in $\RR^p$ with $\sigma_{j}^{2} = \mathbb E Y_{j}^{2}   > 0$ for $1 \le j \le p$, $\underline{\sigma} = \min\limits_{1 \le j \le p} \sigma_{j}$, $\bar{\sigma} = \max\limits_{1 \le j \le p} \sigma_{j}$, $a_p=\mathbb E \{\maxl_{1\le j \le p}(Y_j/\sigma_j)\}$, and $\bar{a}_p=\Ep\{\maxl_{1\le j \le p}|Y_j/\sigma_j|\}$. \\
		(i) If  $\underline{\sigma}=\overline{\sigma}=\sigma$, then for $\epsilon > 0$, $\mathcal{L}(Y_{(k)},\epsilon)\le 4k \epsilon (a_p+1)/\sigma.$\\
		(ii) If $\underline{\sigma}<\overline{\sigma}$, then for $\epsilon > 0$,
		
		$\mathcal{L}(Y_{(k)},\epsilon) \leq 8\sqrt{2}(\bar\sigma/\underline\sigma^2)k\epsilon\{\bar a_p+\sqrt{1\vee\ln(\underline\sigma/\epsilon)}\}.$
	\end{proposition}

	The following corollary provides the mean-uniform anti-concentration bound needed for the Stein-kernel comparison. It is proved below using the Gaussian anti-concentration inequality in Lemma J.3 of \cite{ICL}.
	
	\begin{corollary}[]\label{cor: anticoncentration}

		Let $\bs Z=(Z_1,\ldots,Z_p)^{\rm T}$ be a centered Gaussian random vector in $\RR^p$, where $p\geq2$, and suppose that $\min_{1\leq j\leq p}\Ep[Z_j^2]\geq\underline\sigma^2>0$. Then, for every $\bs\mu\in\RR^p$, $1\leq k\leq p$, and $\epsilon>0$,
		$$
		\sup_{x\in\RR}\Pr\{x<\mathfrak s_k(\bs Z+\bs\mu)\leq x+\epsilon\}
		\lesssim \frac{k\epsilon}{\underline\sigma}\sqrt{1\vee\ln p}.
		$$
		The implicit constant is universal. Consequently, $\mathcal L(\mathfrak s_k(\bs Z+\bs\mu),\epsilon)\lesssim k\epsilon\underline\sigma^{-1}\sqrt{1\vee\ln p}$.
	\end{corollary}

	\subsection{ Gaussian approximation  via Stein kernels}\label{sec: stein kernels}
	Stein kernels provide a direct comparison between a centered random vector and its Gaussian counterpart through covariance discrepancies.
	We recall the definition of a Stein kernel; see also \cite{ICL,K19b}.
	\begin{definition}
		Let $\bs{V}=(V_1,\ldots,V_p)^{\rm T}$ be a centered random vector in $\RR^p$. A $p\times p$ matrix-valued measurable function $\tau:\mathbb R^p\to\mathbb R^{p\times p}$
		is called a \textit{Stein kernel} for $\bs{V}$ if $\mathbb E [\tau_{j_1 j_2}(\bs{V})]<\infty$ for $1\le j_1,j_2\le p$ and
		$$
		\sum_{j=1}^p\mathbb E[\partial_j \varphi(\bs{V})V_j] = \sum_{j_1,j_2=1}^p\mathbb E [\partial_{j_1j_2}\varphi(\bs{V})\tau_{j_1j_2}(\bs{V})]
		$$
		for $\varphi\in {\mathbb C}_2(\mathbb R^p)$.
	\end{definition}
	The following proposition gives the corresponding Gaussian comparison bound.
	\begin{proposition}[Gaussian approximation via Stein kernels]\label{thm: stein kernel}
		Let $\bs{V}$ be a centered  random vector in $\mathbb R^p$ with  a  Stein kernel  $\tau$ and $ \bs{Z}$ be a centered Gaussian random vector in $\mathbb R^p$ with covariance matrix $\Delta=(\Delta_{j_1j_2})_{p\times p}$.
		Suppose that there exists a constant $C_\delta>0$ such that $\Delta_{jj}\geq C_\delta$ for  $j=1,\ldots,p$, then
		$$
		\sup_{x\in\mathbb R}\Big|\mathbb P (V_{(k)}\leq x) - \mathbb P(Z_{(k)}\leq x)\Big| \lesssim  k^{3/2} m_\delta^{1/2}\ln p,
		$$
		where $m_\delta = \mathbb E\left[\max_{1\leq j_1,j_2\leq p}|\tau_{j_1 j_2}(\bs{V}) - \Delta_{j_1 j_2}|\right]$.
	\end{proposition}

	The following corollary follows from Proposition \ref{thm: stein kernel}, Lemma 4.6 in \cite{K19b}, and the boundedness of the Stein kernel of a Beta random variable.
	\begin{corollary}[Multiplier bootstrap to Gaussian comparison]\label{coro:beta-comparison}
		Let $\bs{a}_1,\ldots,\bs{a}_n$ be vectors  in $\mathbb R^p$ with  $\bs a_i=(a_{i1},\ldots,a_{ip})^{\rm T}$ for $i=1,\ldots,n$, and
		$		\min_{1\leq j\leq p}\frac{1}{n}\sum_{i=1}^n a_{ij}^2 \geq C_b$ and $\max_{1\leq j\leq p}\frac{1}{n}\sum_{i=1}^na_{ij}^4 \leq C_B^2$		for some positive constants $C_b$ and $C_B$.
		Let $e_1,\ldots,e_n$ be {\it iid} from $N(0,1)$, and let $\varepsilon_1,\ldots,\varepsilon_n$ be {\it iid} from the standardized Beta$(\alpha,\beta)$ for some positive constants $\alpha,\beta>0$. Then
		$\mathbb E \varepsilon_i =0 , \mathbb E \varepsilon_i^2=1, i=1,\ldots,n.$
		Define $\bs{V} = \frac{1}{\sqrt n}\sum_{i=1}^n \varepsilon_i \bs{a}_i,$ $\bs{Z} = \frac{1}{\sqrt n}\sum_{i=1}^n e_i \bs{a}_i,$
		Then we have
		\begin{equation}\label{eq: beta distribution comparison}
			\begin{aligned}
				&\sup_{x\in\mathbb R}\Big|\Pr(\mathfrak s_k(\boldsymbol V)\leq x) - \Pr(\mathfrak s_k(\boldsymbol Z)\leq x)\Big|\\
				&\quad\lesssim k^{3/2}\left(\frac{C_B^2\ln^5(pn)}{n}\right)^{\frac{1}{4}}.
			\end{aligned}
		\end{equation}
	\end{corollary}
	
	\section{Kolmogorov-Smirnov distances for \texorpdfstring{$T_{(k)}$ and $T_{[k]}$}{T(k) and T[k]}}\label{sub:MR}

	We now apply the comparison tools to $T_{(k)}$ and $T_{[k]}$.
	For the Gaussian multiplier bootstrap, generate $e_1,\ldots,e_n$ independently from $N(0,1)$ and set $\bs{X}_i^*=e_i(\bs{X}_i-\overline{\boldsymbol X})$, where $\overline{\boldsymbol X}=n^{-1}\sum_{i=1}^n\bs{X}_i$.
	Define $\boldsymbol G=n^{-1/2}\sum_{i=1}^n\bs{X}_i^*$ and $G_{(k)}=\mathfrak{s}_k(\boldsymbol G)$.
	Let $\bs{Q}$ be a centered Gaussian vector in $\mathbb R^p$ with covariance matrix $\Delta_n=n^{-1}\sum_{i=1}^n\mathbb E(\bs{X}_i\bs{X}_i^{\rm T})$, and set $Q_{(k)}=\mathfrak{s}_k(\bs Q)$.
	Given $\bs X_1,\ldots,\bs X_n$, let $c_{1-\alpha}^G$ and $c_{1-\alpha}^Q$ denote the $(1-\alpha)$th quantiles of $G_{(k)}$ and $Q_{(k)}$, respectively.

	Assume $n\geq 3$ and $p\ge 3$. Let $b_1 > 0$ and $b_2 > 0$ be some constants such that $b_1\leq b_2$, and $B_n > 1$ be a sequence of constants. We allow   $B_n\to\infty$ as $n\to\infty$.
	We impose two conditions.
	
	\noindent
	{(KS1)} {\it For   $i=1,\ldots,n$ and $j = 1,\ldots,p$, $\mathbb E [\exp(|X_{i j}|/B_n)]\leq 2.$
		
	}

	\noindent
	{(KS2)} {\it For  $j=1,\ldots,p$, $  \frac{1}{n}\sum_{i=1}^n \mathbb E X_{i j}^2 \geq b_1^2 \quad\text{and}\quad \frac{1}{n}\sum_{i=1}^n \mathbb E  X_{i j}^4 \leq B_n^2 b_2^2.$
	}

	\begin{remark}\label{Comm3.1}
		
		Condition (KS1) coincides with  Condition (E.1) in \cite{CCK17} and (E.1) in \cite{DZ17}, which implies that   $X_{i j}$s are sub-exponential
		with the Orlicz norm $\|\cdot\|_{\rm e}$ bounded by $B_n$;
		see \cite{V11} for details. The first part of Condition (KS2), which was called variance lower bound condition by \cite{ICL}, requires the proper scaling of each component of  $\bs{X}_i$. This condition is essential for the application of anti-concentration inequalities (cf.  Corollary \ref{cor: anticoncentration} ).
		The second part of Condition (KS2) holds if  $X_{i j}$ are bounded  by $B_n$ for  $i$ and $j$, and $n^{-1}\sum_{i=1}^n\mathbb E X_{ij}^2 \leq b_2^2$ for  $j=1,\ldots,p$.
		Importantly, note that none of these conditions impose restrictions on the correlation matrices of $\bs{X}_i$, and therefore, our results cannot be directly derived from classical results in empirical process theory.
	\end{remark}
	
	The first theorem gives the Gaussian approximation bound for $T_{(k)}$.
	
	\begin{theorem}[Gaussian approximation for $T_{(k)}$]\label{thm: gaussian approximation main}
		Assume that Conditions (KS1) and (KS2) hold. Then
		\begin{equation*}\label{eq: gaussian bound main}
			\left|\mathbb P \left(T_{(k)} > c^Q_{1-\alpha}\right) - \alpha\right| \lesssim k^2\left(\frac{B_n^2\ln^5(p n)}{n}\right)^{\frac{1}{4}}.
		\end{equation*}
	\end{theorem}

	The following Gaussian-to-Gaussian comparison inequality is used to pass from the ideal Gaussian approximation to the multiplier-bootstrap approximation.

	\begin{proposition}[Gaussian-to-Gaussian comparison]\label{coro:g-g-comparison}
		Let $\bs{Z}_1$ and $\bs{Z}_2$ be centered Gaussian random vectors in $\mathbb R^p$ with covariance matrices $\Delta_1=(\Delta_{1,j_1j_2})_{p\times p}$ and $\Delta_2=(\Delta_{2,j_1j_2})_{p\times p}$, respectively.
		Define $M_\delta = \max_{1\leq j_1,j_2\leq p}|\Delta_{1,j_1 j_2} - \Delta_{2,j_1 j_2}|$ .
		If  $\Delta_2$ satisfies $\Delta_{2,jj}\geq c_\delta$ for  $j=1,\ldots,p$,  and $c_\delta>0$ is
		a positive constant, then
		$$
		\sup_{x\in\mathbb R}\Big|\mathbb P\big(\mathfrak{s}_k(\bs{Z}_1)  \leq x\big) - \Pr\big(\mathfrak{s}_k(\bs{Z}_2) \leq x\big)\Big|\lesssim  k^{3/2}M_\delta^{1/2}\ln p.
		$$
	\end{proposition}

	The next result gives the corresponding multiplier-bootstrap approximation for $T_{(k)}$.
	\begin{theorem}[Gaussian multiplier bootstrap approximation for $T_{(k)}$]\label{cor: rejection probabilities}
		Assume  that Conditions (KS1) and (KS2) hold. Then
		$$\left|\Pr\left(T_{(k)} > c_{1-\alpha}^G\right) - \alpha\right| \lesssim   k^2\left(\frac{B_n^2\ln^5(p n)}{n}\right)^{\frac{1}{4}}.$$
	\end{theorem}
	
	The same arguments extend to $T_{[k]}$.
	Let $\bs{Y}_i$ be the $2p$-dimensional vector obtained by concatenating $\bs{X}_i$ and $-\bs{X}_i$.
	Then $T_{[k]}=\mathfrak{s}_k(n^{-1/2}\sum_{i=1}^n\boldsymbol Y_i)$. Denote the $(1-\alpha)$th Gaussian and multiplier-bootstrap quantiles of $T_{[k]}$ by $c_{1-\alpha}^{[Q]}$ and $c_{1-\alpha}^{[G]}$, respectively.

	\begin{theorem}[Gaussian approximation for $T_{[k]}$]\label{thm2: gaussian approximation main}
		Assume that Conditions (KS1) and (KS2) hold. Then
		\begin{equation*}\label{eq: gaussian bound abs}
			\left|\mathbb P \left(T_{[k]} > c^{[Q]}_{1-\alpha}\right) - \alpha\right| \lesssim k^2\left(\frac{B_n^2\ln^5(p n)}{n}\right)^{\frac{1}{4}}.
		\end{equation*}
	\end{theorem}

	\begin{theorem}[Gaussian multiplier bootstrap approximation for $T_{[k]}$]\label{cor2: rejection probabilities}
		Assume  that Conditions (KS1) and (KS2) hold. Then
		$$\left|\Pr\left(T_{[k]} > c_{1-\alpha}^{[G]}\right) - \alpha\right| \lesssim   k^2\left(\frac{B_n^2\ln^5(p n)}{n}\right)^{\frac{1}{4}}.$$
	\end{theorem}

	\section{Kolmogorov-Smirnov distances for \texorpdfstring{$T_\psi$}{T psi}}
	Using the notation above, define the centered Gaussian multiplier sum
	$$\boldsymbol G = (G_1,\ldots, G_p)^{\rm T} = \frac{1}{\sqrt{n}} \sum_{i=1}^n \bs{X}_i^*.$$
	Rank $|G_{1}|,\ldots,|G_{p}|$ in decreasing order and denote the ordered values by $G_{[1]},\ldots,G_{[p]}$.
	Let $T_\psi^{\boldsymbol{G}}=\psi(G_{[1]},\ldots,G_{[k]})$ and define
	$$\rho_1=\sup\limits_{t\in \mathbb{R}}\big| \mathbb{P}\big(T_\psi \leq t\big) - \mathbb{P}\big(T_\psi^{\boldsymbol{G}}  \leq t \mid \boldsymbol X_1, \ldots, \boldsymbol X_n \big) \big|.$$

	Similarly, let $\bs{Q}=(Q_1,\ldots,Q_p)^{\rm T}$ be a centered Gaussian vector with covariance matrix $\Delta_n=n^{-1}\sum_{i=1}^n\mathbb E(\bs{X}_i\bs{X}_i^{\rm T})$.
	Rank $|Q_1|,\ldots,|Q_p|$ in decreasing order and denote the ordered values by $Q_{[1]},\ldots,Q_{[p]}$.
	Let $T_\psi^{\boldsymbol{Q}}=\psi(Q_{[1]},\ldots,Q_{[k]})$ and define
	$$\rho_2=\sup\limits_{t\in \mathbb{R}}\big| \mathbb{P}\big(T_\psi \leq t\big) - \mathbb{P}\big(T_\psi^{\boldsymbol{Q}}  \leq t  \big) \big|.$$
	We impose the following conditions.

	\noindent{(KS3)}  {\it $  \frac{1}{n} \sum_{i=1}^n \mathbb{E}[(\boldsymbol v^{\rm T} \boldsymbol X_i)^2] \geq b_1^2$ for $\boldsymbol v \in \mathbb{R}^{p}$ with $\|\boldsymbol v\|_0 \leq k$ and $\|\boldsymbol v\|_2 = 1$.}
	
	\noindent{(KS4)} {\it $\psi\big(u_1,\ldots,u_k\big)\geq \psi\big(v_1,\ldots,v_k\big)$ when $u_l\geq v_l\geq0$ for $l = 1,\ldots,k$.}
	
	\noindent{(KS5)} {\it $\psi\big(u_{(1)},\ldots,u_{(k)}\big) \geq \psi\big(u_1,\ldots,u_k\big)$ when $u_l\geq0$ for $l = 1,\ldots,k$.}
	
	\noindent{(KS6)} {\it $\big\{(u_1, \ldots, u_k)\in \mathbb{R}^k \mid \psi(u_1,\ldots, u_k)\leq t\big\}$ is closed and convex for $t\in \mathbb{R}$.}
	
	\begin{remark}
		
		Condition (KS3) coincides with Condition (M.1$^{\prime \prime}$) in \cite{CCK17}. 
		Conditions (KS4) and (KS5) imply that $\psi$ is monotone, whereas Condition (KS6) ensures that $\psi$ is Borel measurable and quasi-convex.
		
	\end{remark}

	The following two theorems give the Gaussian and multiplier-bootstrap bounds for $T_{\psi}$.
	
	\begin{theorem}[Gaussian approximation for $T_{\psi}$]\label{thm5.1} Suppose that Conditions (KS1)-(KS6) are satisfied. When $n$ is sufficiently large, we have
		
		$$ {\rho_2} \lesssim k^{\frac{7}{2}} \Big( \frac{B_n^2 \ln^5(\gamma k^{\frac{1}{4}}pn^{\frac{13}{8}}\ln^{\frac{1}{2}}(2pn^{\frac{5}{4}})\ln[k^{\frac{1}{2}}n^{\frac{5}{4}}\ln(2pn^{\frac{5}{4}})])}{n} \Big)^{\frac{1}{4}}, $$
		where $\gamma$ is a constant.
		
	\end{theorem}

	\begin{theorem}[Gaussian multiplier bootstrap approximation for $T_{\psi}$]\label{thm5.2} Suppose that Conditions (KS1)-(KS6) are satisfied. When $n$ is sufficiently large, we have
		$$ {\rho_1} \lesssim k^{\frac{7}{2}} \Big( \frac{B_n^2 \ln^5(\gamma k^{\frac{1}{4}}pn^{\frac{13}{8}}\ln^{\frac{1}{2}}(2pn^{\frac{5}{4}})\ln[k^{\frac{1}{2}}n^{\frac{5}{4}}\ln(2pn^{\frac{5}{4}})])}{n} \Big)^{\frac{1}{4}}, $$
		where $\gamma$ is a constant.
	\end{theorem}

	\begin{corollary}[Gaussian approximation and Gaussian multiplier bootstrap approximation for $T_{\psi}$]\label{cor5.1} Assume that Conditions (KS1)-(KS3) hold. When $n$ is sufficiently large, we have the following results.
		
		\noindent(i) If $\psi\big(u_1, \ldots, u_k\big)=\sum_{l=1}^{k}a_lu_l$, where $a_1 \geq \ldots \geq a_k \geq 0$ are constants,
		$$\rho_1\vee \rho_2 \lesssim k \Big( \frac{B_n^2 \ln^5(2^kp^kn)}{n} \Big)^{\frac{1}{4}};$$
		
		\noindent(ii)  If $\psi\big(u_1, \ldots, u_k\big)=\sum_{l=1}^{k}a_l\exp(u_l)$, where $a_1 \geq \ldots \geq a_k \geq 0$ are constants,
		$$\rho_1\vee \rho_2 \lesssim k^{\frac{7}{2}} \Big( \frac{B_n^2 \ln^5(\gamma k^{\frac{1}{4}}pn^{\frac{13}{8}}\ln^{\frac{1}{2}}(2pn^{\frac{5}{4}})\ln[k^{\frac{1}{2}}n^{\frac{5}{4}}\ln(2pn^{\frac{5}{4}})])}{n} \Big)^{\frac{1}{4}};$$
		
		\noindent(iii) If $\psi\big(u_1, \ldots, u_k\big)=\sum_{l=1}^{k}a_lu_l^2$, where $a_1 \geq \ldots \geq a_k \geq 0$ are constants,
		$$\rho_1\vee \rho_2 \lesssim k^{\frac{7}{2}} \Big( \frac{B_n^2 \ln^5(\gamma k^{\frac{1}{4}}pn^{\frac{13}{8}}\ln^{\frac{1}{2}}(2pn^{\frac{5}{4}})\ln[k^{\frac{1}{2}}n^{\frac{5}{4}}\ln(2pn^{\frac{5}{4}})])}{n} \Big)^{\frac{1}{4}},$$
		in which $\gamma$ is a constant.
	\end{corollary}
	
	 In view of Theorems 5.1 and 5.2 and Corollary 5.1,  the dimension $p$ is allowed to grow as fast as  $\ln p = o(n^{1/5})$ to ensure convergence, assuming $B_n = O(1)$.

	\section{Combining several sets of top order statistics}
	Denote by $\psi_l$ an $l$-dimensional function on $(u_1,\ldots,u_l)^{\rm T} \in\mathbb R^l$ for $l = 1,\ldots,k$ which satisfies Conditions (KS4)-(KS6).
	We propose to combine some large order  statistics as
	$$T_F=\min\big\{1- F_{l_1}\big( T_{\psi_{l_1}}\big),\ldots,1- F_{l_s}\big(T_{\psi_{l_s}}\big)\big\}, $$
	where  $ F_l(t)$  is the cumulative distribution function  of $T_{\psi_l}^{\bs G} $ given $\boldsymbol X_1, \ldots, \boldsymbol X_n$
	for $l=l_1, \ldots, l_s$ with $1\leq l_1 < \ldots < l_s=k$.
	Based on Theorem 5.1, given $\bs{X}_{1},\ldots,\bs{X}_n$, a conventional approach  approximating the tail probability  of $T_F$ is to employ a two-layer resampling procedure, where the first layer is for the approximation of $ F_{l_1},\ldots, F_{l_s}$
	and the second one is for the tail probability.
	This nested procedure is computationally intensive.
	To alleviate this burden, we propose a more efficient one-layer Gaussian multiplier bootstrap procedure  described below.
	
	\vspace{2ex}

	\noindent{\bf One-layer Gaussian multiplier bootstrap procedure for $T_F$.} \\
	{\bf\textit{Step 1.}} (Generate Gaussian multiplier samples of $\bs T$). Set a sufficiently large $B>0$. Given $\boldsymbol X_1, \ldots, \boldsymbol X_n$, calculate $\overline{\boldsymbol X} = n^{-1}\sum_{i=1}^n \boldsymbol X_i$ and generate $B$ sets of Gaussian multiplier bootstrap samples   as
	$$
	\boldsymbol G^{(r)} = (G_1^{(r)},\ldots, G_p^{(r)})^{\rm T}
	= \frac{1}{\sqrt{n}} \sum_{i=1}^n (\boldsymbol X_i - \overline{\boldsymbol X}) e_i^{(r)}, r=1,\ldots,B,
	$$
	where  $e_1^{(1)},\ldots,e_n^{(1)},\ldots, e_1^{(B)},\ldots,e_n^{(B)}$ are {\it iid} from $N(0,1)$.
	For $r=1,\ldots,B$, rank  $|G^{(r)}_1|,\ldots,|G^{(r)}_p|$ in a decreasing order and denote them by
	$G_{[1]}^{(r)},\ldots,G_{[p]}^{(r)}$.
	Then  calculate $T_{\psi_{l}}^{\bs G^{(r)}}=\psi_l\bigl(G_{[1]}^{(r)},\ldots,G_{[l]}^{(r)}\bigr)$ for $l=l_1, \ldots, l_s$ and $r=1,\ldots,B$.  \\
	{\bf\textit{Step 2.}} (Calculate the observed value of $T_F$). Calculate the observed value of $T_F$ as
	$$t_0=\min\Big\{ \frac{1}{B}\sum_{r=1}^{B}\mathds{1}\big\{T_{\psi_{l_1}}^{\bs G^{(r)}} > T_{\psi_{l_1}}\big\},\ldots, \frac{1}{B}\sum_{r=1}^{B}\mathds{1}\big\{T_{\psi_{l_s}}^{\bs G^{(r)}} > T_{\psi_{l_s}}\big\}\Big\}. $$ \\
	{\bf\textit{Step 3.}} (Calculate the empirical values of $T_F$). For $r=1,\ldots,B$, calculate
	$$t_r
	=\min\Big\{\frac{1}{B-1}
	\sum_{\substack{h=1 \\ h\neq r}}^B\mathds{1}\big\{T_{\psi_{l_1}}^{\bs G^{(h)}} > T_{\psi_{l_1}}^{\bs G^{(r)}}\big\},\ldots,\frac{1}{B-1}\sum_{\substack{h=1 \\ h\neq r}}^B\mathds{1}\big\{T_{\psi_{l_s}}^{\bs G^{(h)}} > T_{\psi_{l_s}}^{\bs G^{(r)}}\big\}
	\Big\}. $$    \\
	{\bf\textit{Step 4.}} (Calculate the tail probability). The tail probability of $T_F$ is approximated by
	$$\frac{1}{B}\sum_{r=1}^B \mathds{1}\{t_r< t_0\}. $$ \\
	
	In what follows, we analyze the consistency of the proposed one-layer bootstrap procedure.
	
	\begin{theorem}[Gaussian multiplier bootstrap approximation for $T_{F}$]\label{thm6.1} Suppose that Conditions (KS1)-(KS3) are satisfied. When $n$ is sufficiently large, we have
		$$\lim_{B\rightarrow \infty }\Big|\frac{1}{B}\sum_{r=1}^B\mathds{1}{\{t_r<t\}}-\mathbb{P}(T_F< t)\Big|\lesssim k^{\frac{7}{2}} \Big( \frac{B_n^2 \ln^5(\gamma k^{\frac{1}{4}}pn^{\frac{13}{8}}\ln^{\frac{1}{2}}(2pn^{\frac{5}{4}})\ln[k^{\frac{1}{2}}n^{\frac{5}{4}}\ln(2pn^{\frac{5}{4}})])}{n} \Big)^{\frac{1}{4}} $$
		almost surely for $t\in \mathbb{R}$, where $\gamma$ is a constant.
	\end{theorem}

	\section{Numerical Studies}

	\subsection{Simulation studies} \label{simu}

	We evaluate the accuracy of Gaussian multiplier bootstrap in approximating the tail probabilities of  $T_{(k)}$, $T_{\psi_k}$, and $T_F$.
	The results for $T_{[k]}$ are qualitatively similar to those for $T_{(k)}$ and are therefore omitted for brevity.
	We generate independent $\bs X_1,\ldots,\bs X_{1000}$ from a $300$-dimensional $t$-distribution
	$t_{10}({\bf 0}_{300}, \boldsymbol\Theta)$ with $10$ degrees of freedom and covariance matrix
	$\boldsymbol\Theta = (\theta_{ij})_{300\times 300}$, where $\theta_{ij} = \rho^{|i-j|}$ with $\rho=0.2$ or $0.8$.
	We generate $2000$ Gaussian multiplier bootstrap samples. 
	Based on $2000$ Monte Carlo replicates, Q--Q plots are employed to compare the empirical tail probabilities with their theoretical counterparts.
	For the order statistic $T_{(k)}$, we consider $k=1,\ldots,6$.
	For $k=1,\ldots,5$, we consider the sum-of-squares function and construct  $T_{\psi_k} = \sum_{l=1}^k T_{[l]}^2$.
	Moreover, we select  the indices 1,3, and 5, and construct $T_F=\min\big\{1- F_{1}\big( T_{\psi_1}\big),1- F_{3}\big(T_{\psi_{3}}\big),1- F_{5}\big(T_{\psi_{5}}\big)\big\}. $
	The  Gaussian multiplier bootstrap procedure to calculate the tail probability for $T_{(k)}$ is detailed below. The procedure for $T_{\psi_k}$ follows similarly and is therefore omitted, while the procedure for $T_F$ is presented in Section 6.
	
	\noindent{\bf Gaussian multiplier bootstrap procedure for $T_{(k)}$  } \\
	{\bf\textit{Step 1.}} (Calculate the observed value $T_{(k)}$). Denote $\boldsymbol T = (T_1, \ldots, T_{300})^{\rm T} = \frac{1}{\sqrt{1000}} \sum_{i=1}^{1000} \boldsymbol X_i$.
	Rank $T_1,\ldots,T_{300}$ in a decreasing order and denote them by $T_{(1)},\ldots,T_{(300)}$.\\ 
	{\bf\textit{Step 2.}} (Generate Gaussian multiplier samples of $T_{(k)}$). Given $\boldsymbol X_1, \ldots, \boldsymbol X_{1000}$, calculate $\overline{\boldsymbol X} = \frac{1}{1000}\sum_{i=1}^{1000} \boldsymbol X_i$, and generate $2000$ Gaussian multiplier bootstrap samples as
	\[
	\boldsymbol G^{(r)} = (G_1^{(r)},\ldots, G_{300}^{(r)})^{\rm T}
	= \frac{1}{\sqrt{1000}} \sum_{i=1}^{1000} (\boldsymbol X_i - \overline{\boldsymbol X}) e_i^{(r)}, r=1,\ldots,2000,
	\]
	where  $e_1^{(1)},\ldots,e_{1000}^{(1)},\ldots, e_1^{(2000)},\ldots,e_{1000}^{(2000)}$ are {\it iid} from $N(0,1)$.
	For $r=1,\ldots,2000$, rank $G_1^{(r)},\ldots, G_{300}^{(r)}$ in a decreasing order and denote them by
	$G_{(1)}^{(r)},\ldots,G_{(300)}^{(r)}$.\\ 
	{\bf\textit{Step 3.}} (Calculate the tail probability of $T_{(k)}$). For $k=1,\ldots, 6$, the tail probability is approximated by
	$$\frac{1}{2000}\sum_{r=1}^{2000} \mathds{1}\{G_{(k)}^{(r)}> T_{(k)}\}. $$ \\
	
	Figures \ref{tab:fig1} to \ref{tab:fig3} display the theoretical tail probabilities against the sample tail probabilities based on the  Gaussian multiplier bootstrap procedure  in the  considered scenarios.
	They show that the tail probabilities derived from the Gaussian multiplier bootstrap procedure asymptotically follow the uniform distribution.
	The plots indicate that the quantiles of $T_{(k)}$, $T_{\psi_k}$ and $T_F$ are well approximated by the Gaussian multiplier bootstrap across the two correlation settings.

	\begin{figure}
		\begin{center}
			\includegraphics[scale=0.6]{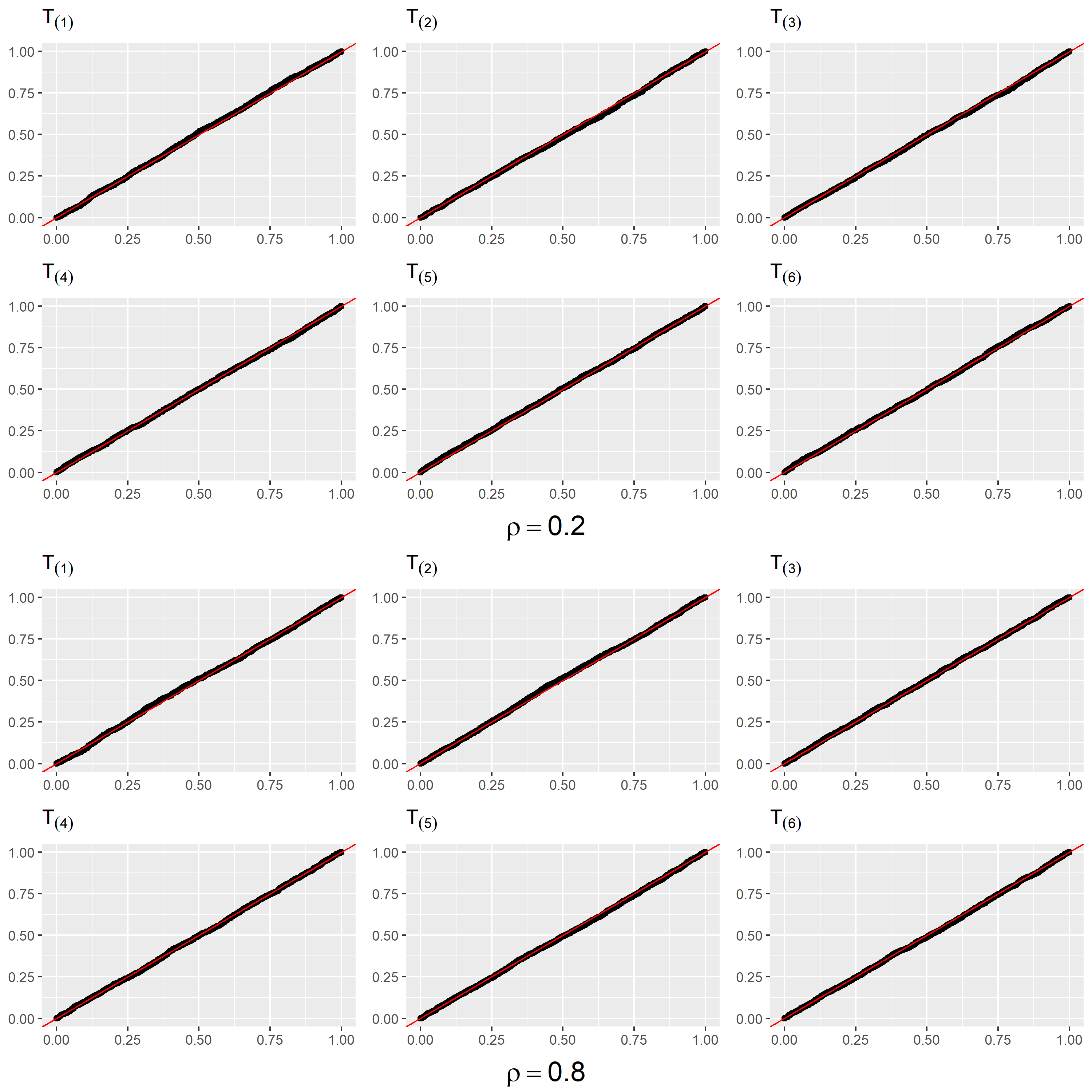}
		\end{center}\vspace{-0.5cm}
		\caption{The theoretical tail probabilities against the sample  tail probabilities based on the Gaussian multiplier bootstrap procedure for $T_{(k)}$ when $k= 1,\ldots,6 $ with $\rho=0.2$ or $0.8$.}
		\label{tab:fig1}
	\end{figure}
	
	\begin{figure}
		\begin{center}
			\includegraphics[scale=0.6]{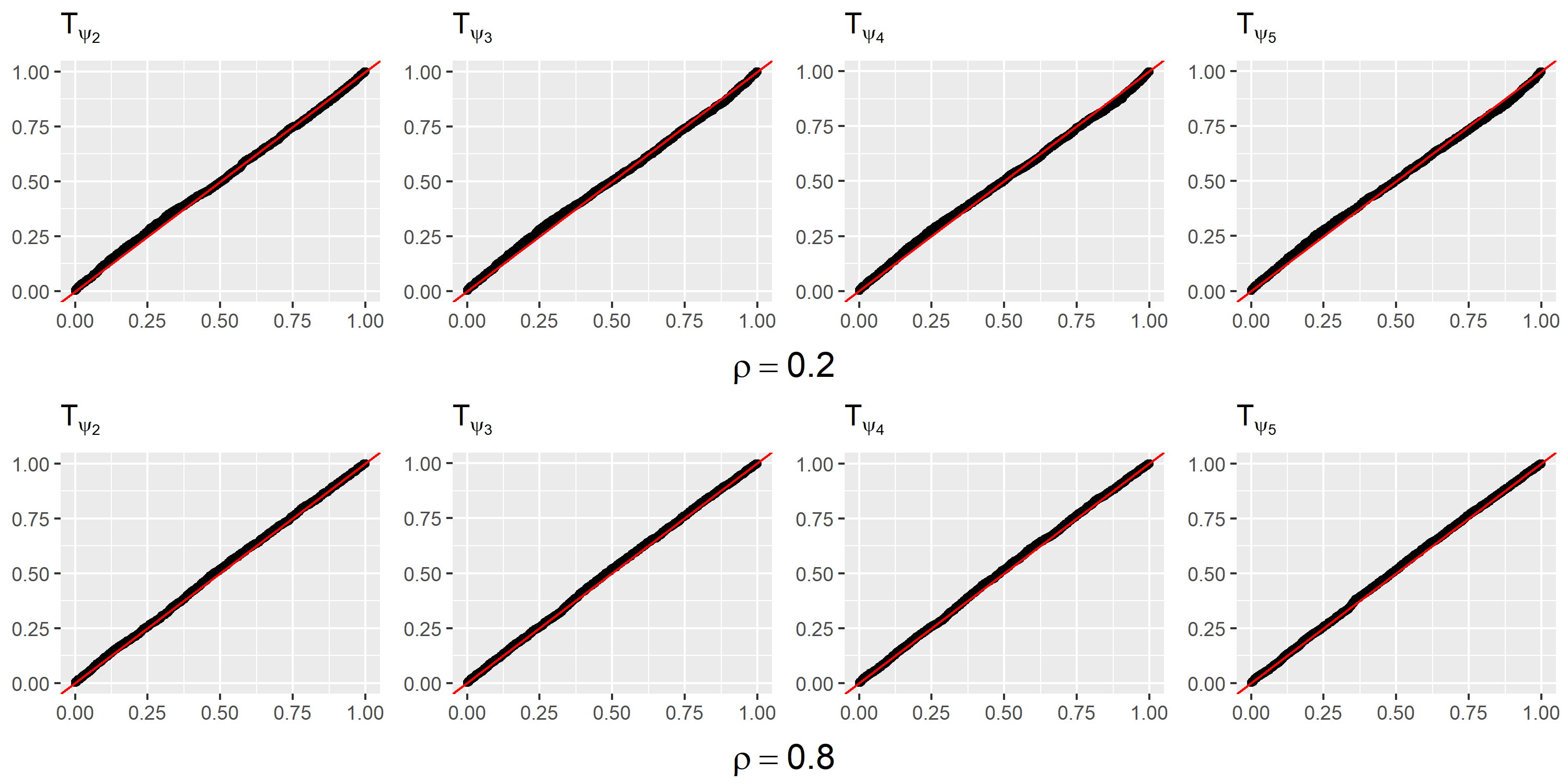}
		\end{center}\vspace{-0.5cm}
		\caption{ The theoretical tail probabilities against the sample tail probabilities based on the Gaussian multiplier bootstrap procedure for $T_{\psi_{k}}$ when $k= 2,\ldots,5 $ with $\rho=0.2$ or $0.8$.}
		\label{tab:fig2}
	\end{figure}

	\begin{figure}
		\begin{center}
			\includegraphics[scale=0.4]{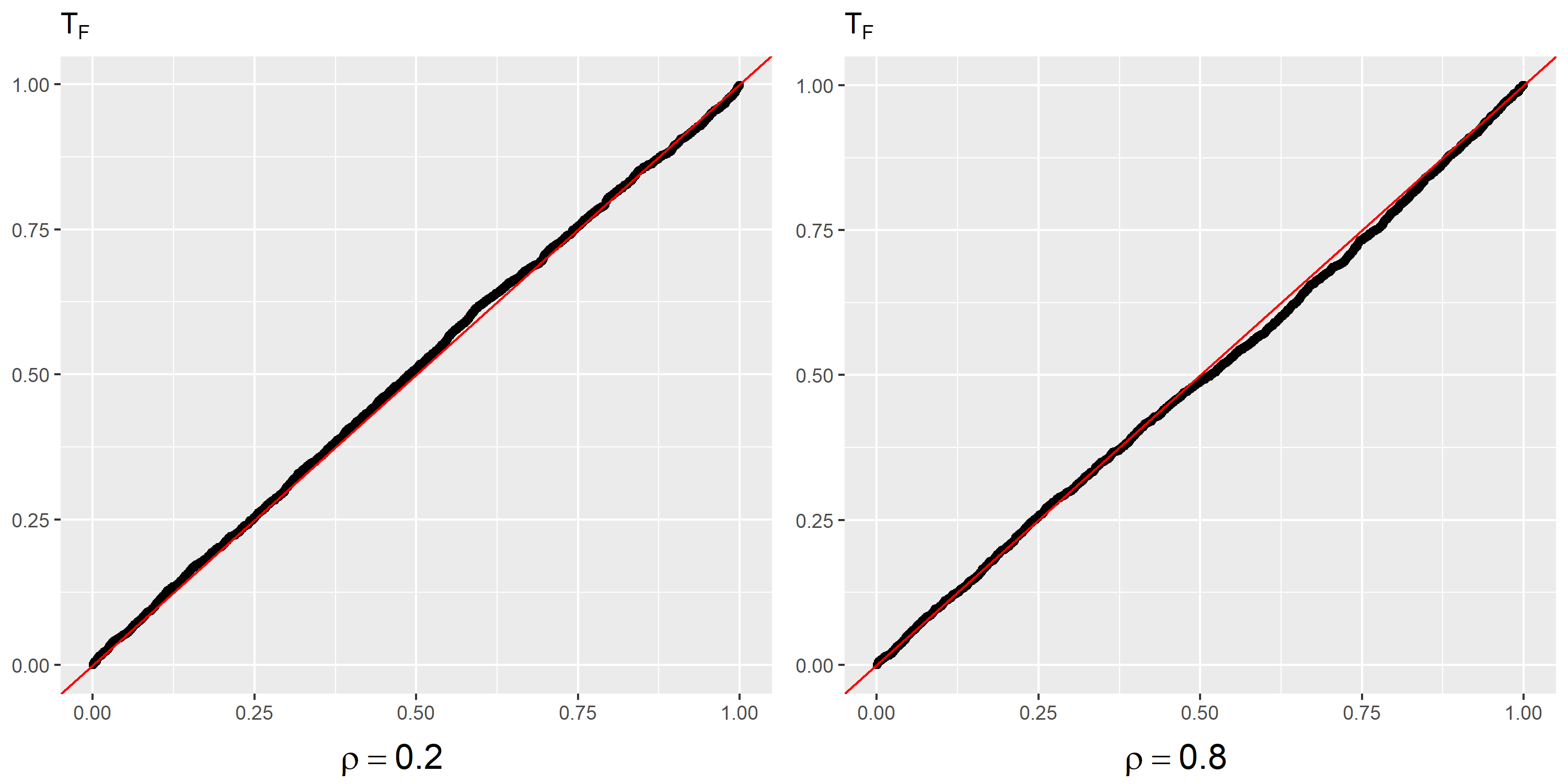}
		\end{center}\vspace{-0.5cm}
		\caption{ The theoretical tail probabilities against the sample tail probabilities based on the Gaussian multiplier bootstrap procedure for $T_F$  with $\rho=0.2$ or $0.8$.}
		\label{tab:fig3}
	\end{figure}

	\subsection{ Application to the urban traffic speed data}
	
	We apply the proposed Gaussian multiplier bootstrap procedures to  the  dataset (\url{https://github.com/sysuits}) collected in Guangzhou, China,   comprising urban traffic speed data from 214 anonymous road segments, including urban expressways and arterial roads, over a two-month period (August 1, 2016, to September 30, 2016) at 10-minute intervals.
	Based on its inherent spatial and temporal attributes, the data can be organized into a third-order tensor with dimensions corresponding to road segments, days, and time windows.
	
	The dataset initially exhibits a missing data rate of 1.29\%; these gaps are filled using mean imputation.
	For our analysis, we utilize traffic speed data recorded at 8:00 A.M. and 18:00 P.M. spanning 61 days from 214 road segments.
	Our objective is to assess the null hypothesis $H_0$  that the variation in urban traffic speed among different locations is negligible.
	This comparison provides evidence on spatial differences in congestion across Guangzhou between the two time points.
	We consider $T_{(k)}$, $T_{\psi_k} = \sum_{l=1}^k T_{[l]}^2$, and $T_F=\min\big\{1- F_{1}\big( T_{\psi_1}\big),1- F_{3}\big(T_{\psi_{3}}\big),1- F_{5}\big(T_{\psi_{5}}\big)\big\}, $
	where $F_k$ denotes the conditional CDF of $T^{\bs G}_{\psi_k}$ given the data for $k=1,\ldots,5$. All tests reject $H_0$ with p-values smaller than $0.001$.
	This finding indicates that the urban traffic speed significantly differs between 8:00 A.M. and 18:00 P.M. across the analyzed locations.
	
	\section{A Concluding Remark}
	We studied Gaussian and Gaussian multiplier bootstrap approximations for the $k$th largest coordinate and for functions of the top $k$ ordered coordinates of sums of independent random vectors.
	The results allow the dimension to grow exponentially with the sample size under the stated moment conditions and show that multiplier-bootstrap critical values attain the same order of approximation error as their Gaussian counterparts.
	Future work could explore extensions to dependent data, relax moment constraints via truncation, or adapt the methodology to sparse high-dimensional models.

	\section{Technique details}
	
	\subsection{Proof of Lemma \ref{Derivatives of m}}
	By applying the chain rule and  Lemma A.2 of \cite{GAR},
	we establish the following auxiliary lemma for the proof of Lemma  \ref{Derivatives of m}.

	\begin{lemma}\label{chain rule}
		Let  $\bs{g}: \mathbb{R}^{d} \to \mathbb{R}^{q},\boldsymbol{g}(\bs{x})=\big(g_{1}(\bs{x}), g_{2}(\bs{x}), \ldots, g_{q}(\bs{x})\big)^{\rm T}$  be a function satisfying
		$$\sum_{j=1}^{d}\left|\partial_{j} g_{i}\right| \leq C_{g,1}, \sum_{j_1, j_2=1}^{d}\left|\partial_{j_2} \partial_{j_1} g_{i}\right| \leq C_{g,2}, \sum_{j_1,j_2, j_3=1}^{d}\left|\partial_{j_3} \partial_{j_2} \partial_{j_1} g_{i}\right| \leq C_{g,3},$$
		$$
		\sum_{j_1, j_2, j_3,j_4=1}^{d}\left|\partial_{j_4}\partial_{j_3} \partial_{j_2} \partial_{j_1} g_i\right| \leq C_{g,4}, ~\text{and} ~
		\sum_{j_1, j_2, j_3,j_4,j_5=1}^{d}\left|\partial_{j_5} \partial_{j_4}\partial_{j_3} \partial_{j_2} \partial_{j_1} g_i\right| \leq C_{g,5},
		$$
		uniformly for  $\bs{x}\in\RR^d$ and $i=1,  \ldots, q$, where integers  $d, q \geq 1$  and  $C_{g,1}, C_{g,2}, C_{g,3}, C_{g,4},$ and $ C_{g,5}$  are positive constants. Let  $f: \mathbb{R}^{q} \to \mathbb{R}$  be a five-times continuously differentiable function satisfying
		$$\sum_{j=1}^{q}\left|\partial_{j} f\right| \leq   C_{f,1}, \sum_{j_1, j_2=1}^{q}\left|\partial_{j_2} \partial_{j_1} f\right| \leq C_{f,2},
		\sum_{j_1, j_2, j_3=1}^{q}\left|\partial_{j_3} \partial_{j_2} \partial_{j_1} f\right| \leq C_{f,3}, $$
		$$
		\sum_{j_1, j_2, j_3,j_4=1}^{q}\left|\partial_{j_4}\partial_{j_3} \partial_{j_2} \partial_{j_1} f\right| \leq C_{f,4}, ~\text{and} ~
		\sum_{j_1, j_2, j_3,j_4,j_5=1}^{q}\left|\partial_{j_5} \partial_{j_4}\partial_{j_3} \partial_{j_2} \partial_{j_1} f\right| \leq C_{f,5},
		$$
		where  $C_{f,1}, C_{f,2}, C_{f,3}, C_{f,4},$  and $ C_{f,5} $ are positive constants. Then we have
		$$\sum_{j=1}^{d}\left|\partial_{j}(f \circ \bs{g})\right| \leq C_{g,1} C_{f,1}, \quad \sum_{j_1, j_2=1}^{d}\left|\partial_{j_2} \partial_{j_1}(f \circ \bs{g})\right| \leq C_{g,1}^{2} C_{f,2}+C_{g,2} C_{f,1},$$
		$$\sum_{j_1, j_2, j_3,j_4=1}^{d}\left|\partial_{j_3} \partial_{j_2} \partial_{j_1}(f \circ \bs{g})\right| \leq C_{g,1}^{3} C_{f,3}+3 C_{g,1} C_{g,2} C_{f,2}+C_{g,3} C_{f,1},$$
		$$\sum_{j_1, j_2, j_3=1}^{d}\left|\partial_{j_4}\partial_{j_3} \partial_{j_2} \partial_{j_1}(f \circ \bs{g})\right| \leq C_{g,1}^4C_{f,4}+3C_{g,1}^2C_{g,2}C_{f,3}+3C_{g,1}^2C_{g,2}C_{f,2}+4C_{g,1}C_{g,3}C_{f,2}+3C_{g,2}^2C_{f,2}+C_{g,4}C_{f,1},$$
		and
		\begin{align*}
			\sum_{j_1, j_2, j_3, j_4,j_5=1}^{d}\left|\partial_{j_5}\partial_{j_4}\partial_{j_3} \partial_{j_2} \partial_{j_1}(f \circ \bs{g})\right| &\leq C_{g,1}^5C_{f,5}+7C_{g,1}^3C_{g,2}C_{f,4}+3C_{g,1}^3C_{g,2}C_{f,3}+9C_{g,1}C_{g,2}^2C_{f,3}+7C_{g,1}^2C_{g,3}C_{f,3}\\
			&+6C_{g,1}C_{g,2}^2C_{f,2}+3C_{g,1}^2C_{g,3}C_{f,2}+5C_{g,1}C_{g,4}C_{f,2}+10C_{g,2}C_{g,3}C_{f,2}+C_{g,5}C_{f,1}.
		\end{align*}
	\end{lemma}

	\noindent{ \bf Proof of Lemma \ref{Derivatives of m}} 
	We first bound the derivatives of $g$, $\mathfrak{f}_{N}$, and $\mathbf{H}$.
	Then, we apply  the chain rule to obtain claims (i) and (ii).
	Finally, we prove the claim (iii) by properties of the smooth max function.	
	
	Denote $d_r= (k^{-1}\phi)^r C_g, r=1,\ldots, 5$, we have
	\begin{align}\label{M_t}
		\supl_{x\in \RR} |\partial^{r}g(x)|\le d_r ~~\text{and}~~ d_{r_1}\beta^{r-r_1}\le d_1\beta^{r-1}= C_g k^{-1}\phi^r\ln^{r-1} p, \ r_1=1,\ldots,r.
	\end{align}
	Based on the chain rule, we obtain
	$$\partial_j \mathfrak{f}_N\ge 0,\quad \sum_{j=1}^{p}\partial_j \mathfrak{f}_N=1,\quad \sum_{j_1,j_2=1}^{p}|\partial_{j_1} \partial_{j_2}\mathfrak{f}_N| \le 2\beta,~\text{and} ~\sum_{j_1,j_2,j_3=1}^{p}|\partial_{j_1} \partial_{j_2}\partial_{j_3}\mathfrak{f}_N|\le 6\beta^2, $$
	for $1 \le j_1, j_2, j_3 \le  d$ (see also Lemma A.2 of \cite{GAR}).
	Based on some algebras, we have
	\begin{align*}
		\sum_{j=1}^{p}|\partial_j g_{k,A}| =1, \quad \sum_{j_1,j_2=1}^{p}|\partial_{j_2}\partial_{j_1} g_{k,A}| \les \beta, \quad \sum_{j_1,j_2,j_3=1}^{p}|\partial_{j_3} \partial_{j_2} \partial_{j_1}g_{k,A}| \les \beta^2,\\
		\sum_{j_1,j_2,j_3,j_4=1}^{p}| \partial_{j_4}\partial_{j_3} \partial_{j_2} \partial_{j_1} g_{k,A}| \les \beta^3,  ~\text{and} ~ \sum_{j_1,j_2,j_3,j_4,j_4=1}^{p}|\partial_{j_5} \partial_{j_4}\partial_{j_3} \partial_{j_2} \partial_{j_1} g_{k,A}| \les \beta^4,
	\end{align*}
	for  $A\in \mathscr{A}_k$.
	Recalling \eqref{M_t} and  $\beta=\phi\ln p\ge 1$, and applying Lemma \ref{chain rule} to $g\circ \mathfrak{f}_{N}$,  we have
	\begin{align*}
		&\qquad	\qquad	\sum_{j=1}^{N}|\partial_j (g\circ \mathfrak{f}_{N})|\les d_1, \quad \sum_{j_1,j_2=1}^{N}|\partial_{j_2}\partial_{j_1} (g\circ \mathfrak{f}_{N})| \les d_2+d_1\beta\les d_1\beta,  \\
		&\sum_{j_1,j_2,j_3=1}^{N}|\partial_{j_3}\partial_{j_2}\partial_{j_1} (g\circ \mathfrak{f}_{N})| \les d_3+d_2\beta+d_1\beta^2\les d_1\beta^2, \sum_{j_1,j_2,j_3,j_4=1}^{N}| \partial_{j_4}\partial_{j_3} \partial_{j_2} \partial_{j_1} (g\circ \mathfrak{f}_{N})| \les d_1\beta^3,
	\end{align*}
	and
	\begin{align*}	
		\sum_{j_1,j_2,j_3,j_4,j_5=1}^{N}|\partial_{j_5} \partial_{j_4}\partial_{j_3} \partial_{j_2} \partial_{j_1} (g\circ \mathfrak{f}_{N})| \les d_1\beta^4.
	\end{align*}
	Applying Lemma \ref{chain rule} to $(g\circ \mathfrak{f}_{N})\circ \mathbf{H}$ once again, we obtain claims (i) and (ii).

	Taking the partial derivatives of $\mathfrak{q}_k$ and $g_{k,A}$ with respect to $x_j$, we have
	\begin{align*}
		\pi_j:=\partial_j \mathfrak{q}_k=\frac{\partial_j\tilde{g}_k}{\beta \tilde{g}_k}=\frac{1}{\tilde{g}_k}\sum_{A\in \mathscr{A}_k(j)}\exp(\beta g_{k, A})\partial_jg_{k,A},
	\end{align*}
	and
	\begin{align*}
		\sigma_{j,A}(\bs{x}):=\partial_jg_{k,A}(\bs{x})=\frac{\exp(-\beta x_j )}{\sum_{s\in A}\exp( -\beta x_s) }\mathds{1}\{j\in A\}.
	\end{align*}

	For claim (iii), by the properties of smooth max function (see Lemma A.2 and Lemma A.6 in \cite{GAR}), we have $\sigma_{j,A}(\bs{z}+\bs{w})\le {\rm e}^2\sigma_{j,A}(\bs{w})$ for  $\bs{z}\in\mathbb R^p$ and $\bs{w}\in\mathbb R^p$, where ${\rm e}$ is the base of the natural logarithm. Moreover,
	\begin{align*}
		\frac{\exp(\beta g_{k,A}(\bs{z}+\bs{w}))}{\exp(\beta g_{k,A}(\bs{z}))}
		&=\exp\left\lbrace \beta\left( g_{k,A}(\bs{z}+\bs{w})-g_{k,A}(\bs{z})\right)  \right\rbrace  \\
		&=\exp\left( \beta\cdot\frac{1}{\beta}\ln\left( \frac{\sum_{s\in A}\exp(-\beta z_s)}{\sum_{s\in A} \exp(-\beta z_s-\beta w_s)}\right) \right) \\
		&=\frac{\sum_{s\in A}\exp(-\beta z_s)}{\sum_{s\in A} \exp(-\beta z_s-\beta w_s)}\in[{\rm e}^{-1},{\rm e}].
	\end{align*}
	Therefore, $\pi_j(\bs{z}+\bs{w})\les \pi_j(\bs{z})$. It can be easy to  show that $U_{j_1j_2},\ldots, U_{j_1j_2j_3j_4j_5}$ are finite sums of  products of terms such as $\exp(\beta g_A), \sigma_{j,A}, \pi_j$, and the claim (iii) follows.  \rulex
	
	\vspace{2ex}

	
	
	\subsection{Proof of Proposition \ref{cor: max}}
	
	Using the iterative randomized Lindeberg method developed by \cite{ICL}, we  establish  Lemma \ref{lem: main}, Lemma \ref{lem: closing}, and Corollary \ref{cor: main}, which collectively imply Proposition \ref{cor: max}.
	
	We first introduce some definitions.
	Note that 	  $\bs{V}_1,\ldots,\bs{V}_n, \bs{Z}_1,\ldots,\bs{Z}_n$
	are independent $p$-dimensional  random vectors with components satisfying $\mathbb E V_{i j} = \mathbb E Z_{i j} = 0$ for  $i = 1,\ldots,n$ and $j = 1,\ldots,p$.
	For   $\bs{\epsilon}=(\epsilon_1,\ldots,\epsilon_n)^{\rm T}\in\{0,1\}^n= \{0,1\}\times \cdots \times \{0,1\}$, where  $\times$ denotes the  Cartesian product, define
	\begin{equation*}\label{eq: rho eps definition}
		\rho_{\bs\epsilon} = \sup_{x\in\mathbb R}
		\big|\Pr \big(\s_k ( \bs{S}_{n,\bs\epsilon}^V )  \leq x\big) - \Pr \big(\s_k(\bs{S}_n^Z) \leq x \big) \big|,
	\end{equation*}
	where
	$$
	\bs{S}_{n,\bs\epsilon}^V =\frac{1}{\sqrt n}\sum_{i=1}^n \big(\epsilon_i\bs{V}_i + (1 - \epsilon_i)\bs{Z}_i\big) \quad\text{and}\quad \bs{S}_n^Z =\frac{1}{\sqrt n}\sum_{i=1}^n \bs{Z}_i.
	$$
	We replace $\bs{\epsilon}$ with a certain sequence of random vectors $\bs{\epsilon}^0,\ldots,\bs{\epsilon}^D \in \{ 0,1 \}^n$,  independent of $\bs{V}_1,\ldots,\bs{V}_n,\bs{Z}_1,\ldots,\bs{Z}_n$, and obtain the  recursive bounds for $\rho_{\bs{\epsilon}^d},$ $d=0,\ldots,D$.
	The construction and properties of $\bs{\epsilon}^0,\ldots,\bs{\epsilon}^D \in \{ 0,1 \}^n$ were demonstrated in Section 2 in \cite{ICL}.
	Let $C_{n,1,d}>0$ and $C_{n,2,d}>0$ be some constants for  $n\geq 1$ and $d = 0,\ldots,D$, and define the event $\mathcal A_d$ by
	\begin{align*}
		\mathcal A_d&=
		\bigg\{\max_{1\leq j_1,j_2\leq p}\left| \frac{1}{\sqrt n}\sum_{i=1}^n \epsilon^d_i\big(\Ep[V_{ij_1}V_{ij_2}]- \Ep[Z_{ij_1 }Z_{ij_2}]   \big) \right| \leq C_{n,1,d}\bigg\} \\
		&\quad \bigcap
		\bigg\{ \max_{1\leq j_1,j_2,j_3\leq p}\left| \frac{1}{\sqrt n}\sum_{i=1}^n \epsilon_i^d\big(\Ep[V_{ij_1}V_{ij_2}V_{ij_3}]- \Ep[Z_{ij_1}Z_{ij_2} Z_{ij_3}]  \big)     \right| \leq C_{n,2,d}\bigg\},
	\end{align*}
	where   $\bs{\epsilon}^d=(\epsilon_1^d,\ldots,\epsilon_n^d)^{\rm T}$, $d=0,\ldots,D$.

	Lemma \ref{lem: main} is stated below.
	\begin{lemma}\label{lem: main}
		Assume Conditions (L1) to (L3) hold. Then, for $d = 0,\ldots,D-1$ and a constant $\phi>0$ with
		\begin{equation}\label{eq: phi restriction}
			C_eB_n\phi \ln^2(p n)\leq \sqrt n,
		\end{equation}
		we have
		\begin{align*}
			\rho_{\bs\epsilon^d} &\lesssim k^2\frac{\sqrt{\ln (p(1\vee\phi))}}{\phi} + k\delta + \frac{B_n^2\phi^4\ln^5(p n)}{n^2} +  \left( \Ep[\rho_{\bs\epsilon^{d+1}}\mid \bs{\epsilon}^d] + k^2\frac{\sqrt{\ln (p(1\vee\phi))}}{\phi}+\delta \right)\\
			&\quad \times \left( \frac{C_{n,1,d}\phi^2\ln p}{\sqrt n} + \frac{C_{n,2,d}\phi^3\ln^2p}{n} + \frac{B_n^2\phi^4\ln^3(p n)}{n} \right).
		\end{align*}
	\end{lemma}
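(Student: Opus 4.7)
The plan is to apply an iterative randomized Lindeberg swap comparing $\bs{S}^V_{n,\epsilon^d}$ with $\bs{S}^V_{n,\epsilon^{d+1}}$, using $\bs{S}^Z_n$ as an anchor and the smooth surrogate $m^{\bs{y}}$ of Section \ref{S2-SAFSK} together with the derivative estimates of Lemma \ref{derivatives of m}.

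\textbf{Step 1: Smoothing reduction.} First I would replace the indicator $\mathds{1}\{\s_\kappa(\bs{w})\le x\}$ by $m^{\bs{y}}(\bs{w})$ with an appropriate translation $\bs{y}$. Combining the sandwich $\s_\kappa(\bs{w})\le F_\kappa(\bs{w})\le \s_\kappa(\bs{w})+\kappa\phi^{-1}$ from \eqref{eq: f properties} with the plateau structure of $g$, and invoking the anti-concentration Condition $E_4$ at slack $t=\kappa\phi^{-1}$, the Kolmogorov distance reduces to a supremum over $\bs{y}$ of a smooth functional gap:
\[
\rho_{\epsilon^d} \lesssim \sup_{\bs{y}}\bigl|\Ep[m^{\bs{y}}(\bs{S}^V_{n,\epsilon^d})-m^{\bs{y}}(\bs{S}^Z_n)]\bigr| + \kappa^2\frac{\sqrt{\ln(p(1\vee\phi))}}{\phi} + \delta.
\]

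\textbf{Step 2: Iterative Lindeberg swap.} The identity $\bs{S}^V_{n,\epsilon^d}-\bs{S}^V_{n,\epsilon^{d+1}}=n^{-1/2}\sum_i(\epsilon^d_i-\epsilon^{d+1}_i)\bs{V}_i$ isolates the indices over which to swap one summand at a time. For each such $i$, I would Taylor-expand $m^{\bs{y}}$ to fifth order around the partial sum excluding $\bs{V}_i$, replace $\bs{V}_i$ by an independent copy of $\bs{Z}_i$, and exploit: the first-order terms vanish in expectation; the second- and third-order terms collapse to $\mathcal E^V_{i,jk}-\mathcal E^Z_{i,jk}$ and $\mathcal E^V_{i,jkl}-\mathcal E^Z_{i,jkl}$, which are controlled by $\mathcal B_{n,1,d}$ and $\mathcal B_{n,2,d}$ on $\mathcal A_d$; and the fourth- and fifth-order remainders are bounded using the derivative estimates of Lemma \ref{derivatives of m} together with the moment Conditions $E_1$ and $E_3$, producing the $B_n^2\phi^4\ln^5(pn)/n^2$ term.

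\textbf{Step 3: Truncation and cross term.} The hypothesis \eqref{eq: phi restriction} is calibrated so that, on the high-probability event of Condition $E_2$, $\beta\|n^{-1/2}\bs{V}_i\|_\infty\le 1$, which permits shifting the base point inside the derivative envelopes $U^{\bs{y}}_{jklr},U^{\bs{y}}_{jklrh}$ via part (iii) of Lemma \ref{derivatives of m}; the complementary event contributes $O(1/n^4)$. The product structure in the conclusion arises because, once the moment bounds on $\mathcal A_d$ are substituted, what multiplies the derivative-weighted factor
\[
\frac{\mathcal B_{n,1,d}\phi^2\ln p}{\sqrt n}+\frac{\mathcal B_{n,2,d}\phi^3\ln^2 p}{n}+\frac{B_n^2\phi^4\ln^3(pn)}{n}
\]
is a Kolmogorov distance at the next resolution level; re-applying Step 1 to this residual object yields $\Ep[\rho_{\epsilon^{d+1}}\mid\epsilon^d] + \kappa^2\sqrt{\ln(p(1\vee\phi))}/\phi + \delta$.

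\textbf{Main obstacle.} The delicate step is engineering the telescoping so that the Taylor remainder factors exactly into the derivative-weighted coefficient times an honest Kolmogorov distance at the next scale, rather than a looser supremum: this product form is precisely what drives the backward induction of Proposition \ref{cor: max}. Everything else is careful but mechanical bookkeeping of the fifth-order chain rule through $m^{\bs{y}}=g\circ\mathscr F_N\circ g^{\bs{y}}$ via Lemmas \ref{chain rule} and \ref{derivatives of m}, which explains why the authors defer the detailed verification to the supplementary material.
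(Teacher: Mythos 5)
Your Step 1 (smoothing reduction via $m^{\bs{y}}$, the sandwich \eqref{eq: f properties}, the plateau of $g$, and Condition $E_4$) is exactly the paper's Step 2, so that part is fine. The gap is in the Lindeberg swap, both at the algebraic and at the conceptual level.

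First, the algebraic identity you write is wrong: since $\bs{S}_{n,\epsilon}^V=n^{-1/2}\sum_i(\epsilon_i\bs{V}_i+(1-\epsilon_i)\bs{Z}_i)$, the difference is
$\bs{S}^V_{n,\epsilon^d}-\bs{S}^V_{n,\epsilon^{d+1}}=n^{-1/2}\sum_i(\epsilon^d_i-\epsilon^{d+1}_i)(\bs{V}_i-\bs{Z}_i)$,
not $n^{-1/2}\sum_i(\epsilon^d_i-\epsilon^{d+1}_i)\bs{V}_i$. Second, and more seriously, you propose to telescope directly from $\epsilon^d$ to $\epsilon^{d+1}$. That strategy can only give an \emph{additive} recursion of the form $\rho_{\epsilon^d}\lesssim(\text{swap error})+\rho_{\epsilon^{d+1}}$, which would be summed $D$ times and cannot reproduce the \emph{multiplicative} structure $\rho_{\epsilon^d}\lesssim(\cdots)+\big(\Ep[\rho_{\epsilon^{d+1}}\mid\epsilon^d]+\cdots\big)\times\big(\tfrac{\mathcal B_{n,1,d}\phi^2\ln p}{\sqrt n}+\cdots\big)$ that drives the backward induction in Proposition \ref{cor: max}. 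Indeed your Step 1 already fixes $\mathcal I^{\bs{y}}=m^{\bs{y}}(\bs{S}^V_{n,\epsilon^d})-m^{\bs{y}}(\bs{S}^Z_n)$, so the swap has to go all the way from $\epsilon^d$ down to the pure-$Z$ sum $\bs{S}^Z_n$, not to $\epsilon^{d+1}$; in Step 2 you quietly switch comparison targets.

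What actually produces the product factor in the paper is the \emph{randomized} interpolation: one draws a uniformly random bijection $\sigma$ of $I_d=\{i:\epsilon^d_i=1\}$, telescopes $\mathcal I^x=\sum_{i=1}^{|I_d|}\bigl(m^x(\bs{W}^\sigma_i+\bs{V}_{\sigma(i)}/\sqrt n)-m^x(\bs{W}^\sigma_i+\bs{Z}_{\sigma(i)}/\sqrt n)\bigr)$, and then invokes the identity from \cite{ICL}, Lemma I.2: for each $i\in I_d$, $\Ep\bigl[\tfrac{\sigma^{-1}(i)}{|I_d|+1}m(\bs{W}^\sigma_{\sigma^{-1}(i)}+\bs{V}_i/\sqrt n)+(1-\tfrac{\sigma^{-1}(i)}{|I_d|+1})m(\bs{W}^\sigma_{\sigma^{-1}(i)}+\bs{Z}_i/\sqrt n)\bigr]=\Ep[m(\bs{W})]$ with $\bs{W}=\bs{S}^V_{n,\epsilon^{d+1}}$. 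It is precisely this identity that converts second- and third-order Taylor coefficients, summed over $i$ with their random positions, into expectations of derivative envelopes evaluated at the \emph{next-level} sum $\bs{W}$; bounding those expectations via Lemma \ref{derivatives of m} together with anti-concentration is what multiplies $\Ep[\rho_{\epsilon^{d+1}}\mid\epsilon^d]+\kappa^2\phi^{-1}\sqrt{\ln(p(1\vee\phi))}+\delta$ by $\tfrac{\mathcal B_{n,1,d}\phi^2\ln p}{\sqrt n}+\tfrac{\mathcal B_{n,2,d}\phi^3\ln^2 p}{n}+\tfrac{B_n^2\phi^4\ln^3(pn)}{n}$. Your Step 3 narrative about ``re-applying Step 1 to the residual object'' is not a substitute for this mechanism, and without it the proof of the recursive bound does not go through.
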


	\noindent{\bf Proof of Lemma \ref{lem: main}} 
	We only consider the case of $\phi\geq 1$ since other case is trivial. This, together with \eqref{eq: phi restriction}, implies that
	\begin{equation*}\label{eq: lazy condition}
		C_e B_n \ln^2(p n)\leq \sqrt n.
	\end{equation*}
	
	Fix $d = 0,\ldots,D-1$ and $\bs e^d\in\{0,1\}^n$ such that $\mathcal A_d$ holds if $\bs\epsilon^d = \bs e^d$. All arguments in this proof will be conditional on $\bs\epsilon^d = \bs e^d$. For brevity of notation, we make this conditioning implicit and write $\Pr(\cdot)$ and $\Ep[\cdot]$ instead of $\Pr(\cdot\mid \bs\epsilon^d = \bs e^d)$ and $\Ep[\cdot\mid \bs\epsilon^d = \bs e^d]$, respectively.
	
	
	Recall the definition of $\mathfrak{m}$, see \eqref{def: m}.
	Further, for  $\bs{y}\in\mathbb R^p$, define
	\begin{equation}\label{eq: iy}
		\mathfrak{I} (\bs{y}) =\mathfrak{m}( \bs{S}_{n,\bs \epsilon^d}^V,\bs{y}) -\mathfrak{m}( \bs{S}_n^Z ,\bs{y})
	\end{equation}
	and
	\begin{equation*}\label{eq: h definition}
		\mathfrak{h} (\bs{Y},\bs{y}, \tilde{x}) = \mathds{1}\left\{ -\tilde{x} < \s_k(\bs{Y}-\bs{y})\leq \tilde{x} \right\},\ \text{for  }\tilde{x}\geq 0\text{ and }\bs{Y}\in\mathbb R^p.
	\end{equation*}
	For brevity, if the vector $\bs{y}\in\mathbb R^p$ has the form $\bs{y}=x\bs{1}=(x,\ldots,x)^{\rm T}, x\in\RR$, we simply write $x$ to denote the superscript.
	Also, denote
	\begin{equation*}\label{eq: W definition}
		\bs{W} = \frac{1}{\sqrt n}\sum_{i=1}^n\Big(\epsilon_i^{d + 1}\bs{V}_i + (1-\epsilon_i^{d+1})\bs{Z}_i\Big).
	\end{equation*}
	
	In Step 1 below, we will show that
	\begin{align}
		\sup_{x\in\mathbb R} |\Ep[	\mathfrak{I} (x)]| &\lesssim \frac{B_n^2\phi^4\ln^{5}(p n)}{n^2} +  \left( \Ep[\rho_{\bs\epsilon^{d+1}}] + k\frac{\sqrt{\ln (p(1\vee\phi))}}{\phi}+\delta \right)\nonumber\\
		&\quad \times \left( \frac{C_{n,1,d}\phi^2\ln p}{\sqrt n} + \frac{C_{n,2,d}\phi^3\ln^2p}{n} + \frac{B_n^2\phi^4\ln^3(p n)}{n} \right).\label{eq: EIy bound}
	\end{align}
	In Step 2, we will show that
	\begin{equation}\label{eq: anticoncentration application}
		\rho_{\bs\epsilon^d} \lesssim k^2\frac{\sqrt{\ln (p(1\vee\phi))}}{\phi} +k\delta+ \sup_{x\in\mathbb R}|\Ep[  \mathfrak{I}(x)]|.
	\end{equation}
	The asserted claim follows from combining these two steps. The remaining step is to carry out some auxiliary calculations, which are similar to Steps 3-5 in the
	proof of Lemma 3.1 in \cite{ICL}. Hence we omit the details and only give the procedures of the first two steps here.

	\medskip
	\noindent
	{\bf Step 1.} We adopt the proof strategy from Lemma 3.1 in \cite{ICL} to establish the bound \eqref{eq: EIy bound}. Recall that $I_d = \{i=1,\ldots,n\colon \epsilon_i^d = 1\}$, let $\mathcal S_n$ denote the set of all bijective functions $\{1,\ldots,|I_d|\}$ to $I_d$, and let $\sigma$ be a random function uniformly distributed on $\mathcal S_n$, independent of $\bs{V}_1,\ldots,\bs{V}_n$, $\bs{Z}_1,\ldots,\bs{Z}_n$, and $\bs\epsilon^{d+1}$.
	
	Denote
	$$
	\bs{W}_i^{\sigma} = \frac{1}{\sqrt n}\sum_{j=1}^{i-1}\bs{V}_{\sigma(j)} + \frac{1}{\sqrt n}\sum_{j = i + 1}^{|I_d|} \bs{Z}_{\sigma(j)} + \frac{1}{\sqrt n}\sum_{j\notin I_d}\bs{Z}_j,\ \text{for  }i=1,\ldots,|I_d|.
	$$
	for function $m\colon \mathbb R^p\to\mathbb R$ and   $i\in I_d$, Lemma I.2 in \cite{ICL} implies that
	$$
	\Ep\left[ \frac{\sigma^{-1}(i)}{|I_d|+1}m\Big(\bs{W}^{\sigma}_{\sigma^{-1}(i)} + \frac{\bs{V}_i}{\sqrt n}\Big) + \Big(1 - \frac{\sigma^{-1}(i)}{|I_d|+1}\Big)m\Big(\bs{W}^{\sigma}_{\sigma^{-1}(i)} +\frac{ \bs{Z}_i}{\sqrt n}\Big) \right]
	$$
	is equal to $\Ep[m(\bs{W})]$. Here, Lemma I.2 in \cite{ICL} is applied to the first two terms of $\bs{W}_i^\sigma$ conditional on the set $I_d$ and $\{\bs{Z}_j : j \notin I_d\}$. This property will be used extensively below without further mention.
	
	Fix $x\in\mathbb R$ and observe that
	$$
	\mathfrak{I}(x) = \sum_{i=1}^{|I_d|}\left[ \mathfrak{m} \left(\bs{W}_i^{\sigma} + \frac{\bs{V}_{\sigma(i)}}{\sqrt n},x\right) - \mathfrak{m} \left(\bs{W}_i^{\sigma} + \frac{\bs{Z}_{\sigma(i)}}{\sqrt n},x\right)   \right].
	$$
	Hence, define $f\colon[0,1]\to\mathbb R$ by
	$$
	f(t) = \sum_{i=1}^{|I_d|}\Ep\left[   \mathfrak{m}  \left(\bs{W}_i^{\sigma} + \frac{t \bs{V}_{\sigma(i)}}{\sqrt n},x\right) -   \mathfrak{m} \left(\bs{W}_i^{\sigma} + \frac{t \bs{Z}_{\sigma(i)}}{\sqrt n},x\right) \right],\ \text{for  }t\in[0,1].
	$$
	Then, $\Ep[  \mathfrak{I}(x)] = f(1)$, and by Taylor's expansion,
	$$
	f(1) = f(0) + f^{(1)}(0) + \frac{f^{(2)}(0)}{2} + \frac{f^{(3)}(0)}{6} + \frac{f^{(4)}(\tilde t)}{24},\ \text{where }\tilde t\in(0,1).
	$$
	Here, $f(0) = 0$ by construction and $f^{(1)}(0) = 0$ because $\Ep[V_{i j}] = \Ep[Z_{i j}] = 0$ for  $i\in I_d$ and $j=1,\ldots,p$. Thus, we need to bound $|f^{(2)}(0)|$, $|f^{(3)}(0)|$, and $|f^{(4)}(\tilde t)|$. Using the same reasoning as Steps 3, 4, and 5 in the proof of Lemma 3.1 in \cite{ICL}, we can obtain that
	\begin{align*}
		|f^{(2)}(0)| & \lesssim \frac{B_n^{2}\phi^4\ln^{5}(p n)}{n^{2}} \nonumber\\
		&\quad + \Big(\Ep[\rho_{\bs\epsilon^{d+1}}] + k\frac{\sqrt{\ln (p(1\vee\phi))}}{\phi}+\delta\Big) \Big(\frac{C_{n,1,d}\phi^2\ln p}{\sqrt n} + \frac{B_n^2\phi^4\ln^3(p n)}{n}\Big),
	\end{align*}
	\begin{align*}
		|f^{(3)}(0)| &\lesssim \frac{B_n^{2}\phi^4 \ln^{5}(p n)}{n^{2}} \nonumber\\
		&\quad + \Big(\Ep[\rho_{\bs\epsilon^{d+1}}] + k\frac{\sqrt{\ln (p(1\vee\phi))}}{\phi}+\delta\Big)\Big(\frac{C_{n,2,d}\phi^3\ln^2p}{n} + \frac{B_n^3\phi^5\ln^5(pn)}{n^{3/2}}\Big),
	\end{align*}
	and
	\begin{equation*}
		|f^{(4)}(\tilde t)| \lesssim \frac{B_n^2\phi^4\ln^3 p}{n^2} + \left( \Ep[\rho_{\bs\epsilon^{d+1}}] + k^2\frac{\sqrt{\ln (p(1\vee\phi))}}{\phi}+\delta \right) \frac{B_n^2\phi^4\ln^3 p}{n},
	\end{equation*}
	respectively. Combining these inequalities yields \eqref{eq: EIy bound} and completes Step 1.
	
	\medskip
	\noindent
	{\bf Step 2.} Now we prove \eqref{eq: anticoncentration application}. Fix $x\in\mathbb R$, and observe that
	\begin{align*}
		&\Pr(\s_k(\bs{S}_{n,\bs\epsilon^d}^V) \leq x) \leq\Pr( \mathfrak{q}_k( \bs{S}_{n,\bs\epsilon^d}^V - x - k\phi^{-1}) \leq 0)
		\leq \Ep[\mathfrak{m}( \bs{S}_{n,\bs\epsilon^d}^V,x + k\phi^{-1}) ]\\
		&\quad \leq \Ep[ \mathfrak{m} ( \bs{S}_n^{Z},\bs{x} + k\phi^{-1})] + |\Ep[\mathfrak I(\bs{x}+k\phi^{-1})]|
		\leq \Pr( \s_k(\bs{S}_n^{Z}) \leq x+ 2k\phi^{-1}) + |\Ep[\mathfrak I(\bs{x}+k\phi^{-1})]|\\
		&\quad \leq \Pr( \s_k(\bs{S}_n^{Z} )\leq x) + 2C_ak^2\phi^{-1}\sqrt{\ln p\vee\ln (k^{-1}p\phi/2)}+C_ak\delta + |\Ep[ \mathfrak I(\bs{x}+k\phi^{-1})]|\\
		&\quad \leq \Pr( \s_k(\bs{S}_n^{Z} )\leq x) + 2C_ak^2\phi^{-1}\sqrt{\ln (p(1\vee\phi))}+C_ak\delta+ |\Ep[\mathfrak I(\bs{x}+k\phi^{-1}) ]|.
	\end{align*}
	Here, the first inequality follows from \eqref{eq: f properties}, the second from $\mathfrak{m}(\cdot, \bs{y} + \phi^{-1} ) = g(\mathfrak{q}_k(\cdot - \bs{y} - \phi^{-1}))$ and \eqref{eq: g property}, the third from \eqref{eq: iy}, the fourth from \eqref{eq: g property} and \eqref{eq: f properties}, and the fifth from Condition (L3). The opposite direction follows similarly. Combining these bounds yields \eqref{eq: anticoncentration application} and completes Step 2.
\rulex
	
	\vspace{2ex}
	
	Arguments similar to \cite{ICL} lead to the following Corollary \ref{cor: main} and Lemma \ref{lem: closing}, and hence we omit the relevant proofs.
	
	\begin{corollary}\label{cor: main}
		Assume that all assumptions of Lemma \ref{lem: main} are satisfied. Then there exists a constant $K>0$  such that for  $d = 0,\ldots,D-1$, if $C_{n,1,d+1} \geq C_{n,1,d} + KB_n\ln^{1/2}(p n)$ and $C_{n,2,d+1} \geq C_{n,2,d} + KB_n^2\ln^{3/2}(pn)$, then for   constant $\phi>0$ satisfying \eqref{eq: phi restriction}, it holds that
		\begin{align*}
			\Ep[\rho_{\bs\epsilon^d}\mathds{1}\{\mathcal A_d\}] &\lesssim k^2\frac{\sqrt{\ln p(1\vee \phi)}}{\phi} + k\delta+ \frac{B_n^2\phi^{4}\ln^5(p n)}{n^2} \nonumber\\
			&+ \left( \Ep[\rho_{\bs\epsilon^{d+1}} \mathds{1}\{\mathcal A_{d+1}\}] + k^2\frac{\sqrt{\ln p(1\vee \phi)}}{\phi} +\delta\right)  \nonumber\\
			&\times \left( \frac{C_{n,1,d}\phi^2\ln p}{\sqrt n} + \frac{C_{n,2,d}\phi^3\ln^2p}{n} + \frac{B_n^2\phi^4\ln^3(p n)}{n} \right). \label{eq: max induction}
		\end{align*}
	\end{corollary}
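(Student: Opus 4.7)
The plan is to derive Corollary \ref{cor: main} from Lemma \ref{lem: main} by multiplying the latter's pointwise bound (valid on $\mathcal{A}_d$) by $\mathds{1}\{\mathcal{A}_d\}$ and passing to full expectations. The only delicate point is replacing the conditional expectation $\Ep[\rho_{\epsilon^{d+1}}\mid \epsilon^d]$ appearing in Lemma \ref{lem: main} by the full expectation $\Ep[\rho_{\epsilon^{d+1}}\mathds{1}\{\mathcal{A}_{d+1}\}]$ appearing in the corollary. The growth prescriptions $\mathcal{B}_{n,1,d+1}\ge\mathcal{B}_{n,1,d}+KB_n\ln^{1/2}(pn)$ and $\mathcal{B}_{n,2,d+1}\ge\mathcal{B}_{n,2,d}+KB_n^2\ln^{3/2}(pn)$ are calibrated precisely so that this replacement can be executed through a Bernstein-type estimate on the increment $\epsilon^{d+1}-\epsilon^d$.

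First I would observe that each event $\mathcal{A}_d$ is measurable with respect to $\epsilon^d$ alone, since $\mathcal{E}^V_{i,jk},\mathcal{E}^Z_{i,jk},\mathcal{E}^V_{i,jkl},\mathcal{E}^Z_{i,jkl}$ are deterministic constants. The tower property therefore gives $\Ep[\mathds{1}\{\mathcal{A}_d\}\Ep[\rho_{\epsilon^{d+1}}\mid \epsilon^d]]=\Ep[\rho_{\epsilon^{d+1}}\mathds{1}\{\mathcal{A}_d\}]$. Splitting $\mathds{1}\{\mathcal{A}_d\}=\mathds{1}\{\mathcal{A}_d\cap\mathcal{A}_{d+1}\}+\mathds{1}\{\mathcal{A}_d\setminus\mathcal{A}_{d+1}\}$ and using $\rho_{\epsilon^{d+1}}\le 1$ yields
\[
\Ep[\rho_{\epsilon^{d+1}}\mathds{1}\{\mathcal{A}_d\}]\le \Ep[\rho_{\epsilon^{d+1}}\mathds{1}\{\mathcal{A}_{d+1}\}]+\Pr(\mathcal{A}_d\setminus\mathcal{A}_{d+1}).
\]
The remaining (deterministic) error terms in Lemma \ref{lem: main} are reproduced with a factor $\Pr(\mathcal{A}_d)\le 1$ and are absorbed into the implicit constant. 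The whole corollary therefore reduces to showing that $\Pr(\mathcal{A}_d\setminus\mathcal{A}_{d+1})$ is dominated by the error terms already present.

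Next I would bound $\Pr(\mathcal{A}_d\setminus\mathcal{A}_{d+1})$ via Bernstein. For each $(j,k)$, decompose
\[
\frac{1}{\sqrt n}\sum_{i=1}^n\epsilon_i^{d+1}a_{i,jk} = \frac{1}{\sqrt n}\sum_{i=1}^n\epsilon_i^d a_{i,jk} + \frac{1}{\sqrt n}\sum_{i=1}^n(\epsilon_i^{d+1}-\epsilon_i^d)a_{i,jk}
\]
with $a_{i,jk}=\mathcal{E}^V_{i,jk}-\mathcal{E}^Z_{i,jk}$. Cauchy--Schwarz and Condition $\bs{E}_1$ give $|a_{i,jk}|\lesssim B_n$; the independence and boundedness properties of the ICL increments $\epsilon_i^{d+1}-\epsilon_i^d$ then allow Bernstein's inequality followed by a union bound over the $O(p^2)$ index pairs, producing a tail probability $\lesssim (pn)^{-cK^2}$ that the incremental term exceeds $KB_n\ln^{1/2}(pn)$. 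Thus on $\mathcal{A}_d$ the first defining constraint of $\mathcal{A}_{d+1}$ holds except on an event of that probability. An analogous argument for the cubic constraint, using $|\mathcal{E}^V_{i,jkl}-\mathcal{E}^Z_{i,jkl}|\lesssim B_n^{3/2}$ (with a further truncation at the scale $B_n\ln(pn)$ provided by Condition $\bs{E}_2$, which accounts for the extra $\ln(pn)$ factor in $\mathcal{B}_{n,2,d+1}-\mathcal{B}_{n,2,d}$) together with a union bound over $O(p^3)$ triples, handles the second constraint. Choosing $K$ sufficiently large depending only on $C_v,C_p,C_b$ makes the total residual $\lesssim n^{-2}$, which is absorbed into the $B_n^2\phi^4\ln^5(pn)/n^2$ term of Lemma \ref{lem: main}.

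The main obstacle is the Bernstein step: it requires the specific conditional-independence structure of the iterative randomized Lindeberg construction of $\epsilon^0,\ldots,\epsilon^D$ from \cite{ICL}, namely that the coordinates of $\epsilon^{d+1}-\epsilon^d$ behave, after appropriate conditioning, as bounded centered summands with a variance proxy that is controllable in terms of the prescribed growth of $\mathcal{B}_{n,1,d}$ and $\mathcal{B}_{n,2,d}$. The calibration of $K$ must then be arranged so that the Bernstein exponent dominates both the $\ln p$ penalty from the union bound and the desired $\lesssim n^{-2}$ error, while leaving the dependence of the final implicit constant only on $C_v,C_p,C_b,C_a$. Once this is in place, the remainder of the argument is routine bookkeeping: multiply the bound of Lemma \ref{lem: main} by $\mathds{1}\{\mathcal{A}_d\}$, take expectations term by term, and identify the surviving quantities with the right-hand side of Corollary \ref{cor: main}.
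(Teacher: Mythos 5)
Your skeleton --- multiply the bound of Lemma \ref{lem: main} by $\mathds{1}\{\mathcal{A}_d\}$, take expectations, use the tower property and the $\epsilon^d$-measurability of $\mathcal{A}_d$, split $\mathcal{A}_d$ along $\mathcal{A}_{d+1}$ via $\rho_{\epsilon^{d+1}}\le 1$, and control $\Pr(\mathcal{A}_d\setminus\mathcal{A}_{d+1})$ by a concentration estimate --- is the correct plan (the paper itself omits the proof and defers to \cite{ICL}, and that argument has exactly this shape). The concentration step, however, contains a genuine gap. Cauchy--Schwarz together with Condition $E_1$ does \emph{not} give a pointwise bound $|a_{i,jk}|\lesssim B_n$: $E_1$ is an \emph{averaged} moment condition $\tfrac1n\sum_i\Ep[V_{ij}^4+Z_{ij}^4]\le C_vB_n^2$, and the only uniform-in-$i$ bound available (via $E_3$) is $|a_{i,jk}|\le\Ep[\|\bs V_i\|_\infty^2]+\Ep[\|\bs Z_i\|_\infty^2]\lesssim B_n^2\ln^2(pn)$. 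With this correct max-summand size, Bernstein applied to $\tfrac1{\sqrt n}\sum_i\epsilon^d_i(\eta^d_i-\tfrac12)a_{i,jk}$ at the threshold $t=KB_n\sqrt{\ln(pn)}$ lands in the linear regime ($bt\gtrsim B_n^2\sqrt{\ln(pn)}\gg\sigma^2$, even after invoking \eqref{eq: phi restriction}) and yields only a tail $\lesssim\exp\bigl(-cK\sqrt{\ln(pn)}\bigr)$, which does not beat the $p^2$ (resp.\ $p^3$) union bound for any $K$ that is constant in $(n,p)$. Similarly, the claim $|\mathcal E^V_{i,jkl}-\mathcal E^Z_{i,jkl}|\lesssim B_n^{3/2}$ ``with a truncation at scale $B_n\ln(pn)$ via $E_2$'' is not available: $E_2$ is a tail bound on the random vectors, not on the deterministic mixed moments.

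The argument is salvaged by replacing Bernstein with Hoeffding, which needs only the sum of squared ranges and no pointwise bound. Conditional on $\epsilon^d$, the summands $\epsilon^d_i\eta^d_ia_{i,jk}/\sqrt n$ take values in intervals whose squared lengths sum to $\tfrac1n\sum_i\epsilon^d_ia_{i,jk}^2\le\tfrac1n\sum_ia_{i,jk}^2\le 2C_vB_n^2$ by $E_1$, so Hoeffding gives
\[
\Pr\Bigl(\bigl|\tfrac1{\sqrt n}\textstyle\sum_i\epsilon^{d+1}_ia_{i,jk}-\Ep[\cdot\mid\epsilon^d]\bigr|>KB_n\sqrt{\ln(pn)}\;\Big|\;\epsilon^d\Bigr)\le 2(pn)^{-K^2/C_v},
\]
and since on $\mathcal{A}_d$ the conditional mean has modulus at most $\tfrac12\mathcal B_{n,1,d}$, choosing $K$ large (depending only on $C_v,C_p,C_b$) and taking a union bound over $p^2$ pairs gives $\Pr(\mathcal{A}_d\setminus\mathcal{A}_{d+1})\lesssim n^{-4}$, which is absorbed into the $B_n^2\phi^4\ln^5(pn)/n^2$ term. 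For the cubic constraint, the same Hoeffding route works once one bounds $\tfrac1n\sum_ia_{i,jkl}^2\lesssim B_n^4\ln^2(pn)$ by combining the $E_1$ averaged fourth moments with the $E_3$ uniform bound $\Ep[V_{ij}^2]\lesssim B_n^2\ln^2(pn)$, which is where the extra $\ln(pn)$ factor in $\mathcal B_{n,2,d+1}-\mathcal B_{n,2,d}$ genuinely originates --- not from an $E_2$ truncation.
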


	\begin{lemma}\label{lem: closing}
		For   constant $\phi > 0$ such that \eqref{eq: phi restriction} holds,it follows that $\Ep[\rho_{\bs\epsilon^D}\mathds{1}\{\mathcal A_D\}] \leq 1/n$.
	\end{lemma}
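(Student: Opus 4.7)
The plan is to exploit the fact that in the construction of \cite{ICL}, Section 2, the terminal vector $\bs{\epsilon}^D$ equals $\bs{0}$ with probability at least $1 - 1/n$. Concretely, that construction sets $\bs{\epsilon}^0 \equiv \bs{1}$ and obtains each $\bs{\epsilon}^{d+1}$ from $\bs{\epsilon}^d$ by an independent Bernoulli thinning of the remaining $1$-coordinates with constant success probability, so that $\Pr(\epsilon_i^D = 1) \leq 2^{-D}$ for each $i$, and a union bound yields $\Pr(\bs{\epsilon}^D \neq \bs{0}) \leq n \cdot 2^{-D} \leq 1/n$ once $D$ is taken of order $\log_2 n^2$.

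The first step would be to observe that on the event $\{\bs{\epsilon}^D = \bs{0}\}$, the mixed sum
$\bs{S}_{n,\epsilon^D}^V = \frac{1}{\sqrt n}\sum_{i=1}^n(\epsilon_i^D \bs{V}_i + (1-\epsilon_i^D)\bs{Z}_i)$
collapses pointwise to $\bs{S}_n^Z$, so the two distributions appearing in the definition of $\rho_{\epsilon^D}$ coincide and $\rho_{\epsilon^D} = 0$ on this event. The second step is to combine this with the trivial bound $\rho_\epsilon \leq 1$, valid for every deterministic $\bs{\epsilon}\in\{0,1\}^n$ since $\rho_\epsilon$ is a Kolmogorov distance, together with $\mathds{1}\{\mathcal A_D\} \leq 1$, to conclude
\begin{align*}
\Ep\bigl[\rho_{\epsilon^D}\mathds{1}\{\mathcal A_D\}\bigr] \leq \Ep\bigl[\rho_{\epsilon^D}\mathds{1}\{\bs{\epsilon}^D \neq \bs{0}\}\bigr] \leq \Pr(\bs{\epsilon}^D \neq \bs{0}) \leq \frac{1}{n}.
\end{align*}

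There is no real obstacle at this step: the combinatorial bookkeeping has already been carried out in \cite{ICL}, and condition \eqref{eq: phi restriction} is not actually invoked here, since it enters only in Lemma \ref{lem: main} to justify the Taylor and derivative estimates underlying the recursive inequality. The only substantive point is to verify that the thinning construction from \cite{ICL}, Section 2 can be transported verbatim to the present setting without modification, so that the terminal bound $\Pr(\bs{\epsilon}^D \neq \bs{0}) \leq 1/n$ continues to hold for the coupling used here between $\bs{V}_i$ and $\bs{Z}_i$; since neither the dependence of the statistic on $\kappa$ nor the choice of $\phi$ interacts with the thinning, this transportation is routine.
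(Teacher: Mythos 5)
Your proposal is correct and follows exactly the route the paper intends: the paper explicitly delegates this lemma to ``arguments similar to \cite{ICL},'' and the argument in \cite{ICL} is precisely the one you give --- on $\{\bs{\epsilon}^D = \bs{0}\}$ the two distributions in the definition of $\rho_{\epsilon^D}$ coincide so $\rho_{\epsilon^D}=0$, while the trivial bound $\rho_{\epsilon^D}\le 1$ and the thinning construction's guarantee $\Pr(\bs{\epsilon}^D\neq\bs{0})\le 1/n$ (with $D$ of order $\log_2 n^2$) give the claim. You are also right that condition \eqref{eq: phi restriction} is vacuous here and is carried in the statement only for uniformity with Lemma \ref{lem: main} and Corollary \ref{cor: main}.
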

	
	\vspace{2ex}
	
	\noindent{\bf Proof of Proposition \ref{cor: max}} 	
	The proof of Proposition \ref{cor: max} proceeds as follows. In Lemma \ref{lem: main} and Corollary \ref{cor: main}, we derive a recursive inequality for $\Ep[\rho_{\bs\epsilon^d}\mathds{1}\{\mathcal A_d\}]$, $d=0,\ldots,D$. Next, in Lemma \ref{lem: closing}, we prove  that $\Ep[\rho_{\bs\epsilon^D}\mathds{1}\{\mathcal A_D\}]$ is bounded by $1/n$. Then, we use a backward induction argument  to establish a bound for $\Ep[\rho_{\bs\epsilon^0}\mathds{1}\{\mathcal A_0\}]$. Noting that $\epsilon^0_i=1$, this leads to the desired claim once the constants $C_{n,1,d}$ and $C_{n,2,d}$ are appropriately chosen. 

	Assume that
	\begin{equation}\label{eq: natural bound}
		C_e^4B_n^2\ln^5(p n)\leq n
	\end{equation}
	since otherwise the conclusion of the proposition is trivial.
	Suppose that there exits a constant $C_m>0$ such that
	\begin{equation}\label{eq: bn bounds 1}
		\max_{1\leq j_1,j_2\leq p}\left| \frac{1}{\sqrt n}\sum_{i=1}^n\big(\Ep[V_{i j_1}V_{i j_2}] - \Ep[Z_{i j_1}Z_{i j_2}]\big) \right| \leq C_m B_n\sqrt{\ln(p n)}
	\end{equation}
	and
	\begin{equation}\label{eq: bn bounds 2}
		\max_{1\leq j_1,j_2,j_3\leq p}\left| \frac{1}{\sqrt n}\sum_{i=1}^n \big(\Ep[V_{i j_1}V_{i j_2}V_{i j_3}] - \Ep[Z_{i j_1}Z_{i j_2}Z_{i j_3}]\big) \right| \leq C_m B_n^2\sqrt{\ln^3(p n)}.
	\end{equation}
	
	Let $K$ be the constant from Corollary \ref{cor: main} and for  $d = 0,\ldots,D$, define
	\begin{equation*}\label{eq: fancy b bounds}
		C_{n,1,d}= (C_m + K)(d+1)B_n\ln^{1/2}(p n) \quad \text{and} \quad C_{n,2,d}= (C_m + K)(d+1)B_n^2\ln^{3/2}(p n).
	\end{equation*}
	This ensures that $\mathcal A_0$ holds by \eqref{eq: bn bounds 1} and \eqref{eq: bn bounds 2}, and the requirements of Corollary \ref{cor: main} on $C_{n,1,d}$ and $C_{n,2,d}$ are also satisfied.
	
	Now, for  $d = 0,\ldots,D$, define
	$$
	f_d = \inf\left\{x\geq 1\colon \Ep[\rho_{\bs\epsilon^d}\mathds{1}\{\mathcal A_d\}] \leq
	x\left\{k^2\left(\frac{B_n^2\ln^5(p n)}{n}\right)^{\frac{1}{4}}+k\delta\right\}\right\}.
	$$
	Note that $f_d<\infty$ since $\rho_{\bs\epsilon^d}\leq1$. Then, for  $d = 0,\ldots,D-1$, apply Corollary \ref{cor: main} with
	$$
	\phi = \phi_d = \frac{n^{\frac{1}{4}}}{B_n^{1/2}\ln^{3/4}(p n)((d+1)f_{d+1})^{1/3}},
	$$
	which satisfies the required condition \eqref{eq: phi restriction} by the assumption \eqref{eq: natural bound}. Since
	\begin{align*}
		\frac{B_n^2\phi_d^4\ln^{5}(pn)}{n^2}
		& \leq\frac{\ln^{2}(p n)}{n}
		\leq\frac{\ln^{\frac{1}{4}}(p n)}{n^{\frac{1}{4}}}
		\leq C_e((d+1)f_{d+1})^{1/3}\left(\frac{B_n^2\ln^5(p n)}{n}\right)^{\frac{1}{4}}, \\
		\frac{C_e\sqrt{\ln (p\phi_d)}}{\phi_d} &\leq C_e((d+1)f_{d+1})^{1/3}\left(\frac{B_n^2\ln^5(p n)}{n}\right)^{\frac{1}{4}}, \\
		\frac{C_{n,1,d}\phi_d^2\ln p}{\sqrt n} &\leq \frac{(C_m + K)(d+1)}{((d+1)f_{d+1})^{2/3}},
	\end{align*}
	and
	$$	\frac{C_{n,2,d}\phi^3_d\ln^2p}{n}  \bigvee \frac{B_n^2\phi_d^4\ln^3(p n)}{n} \leq \frac{(C_m + K)\vee 1}{f_{d+1}},$$
	by  Corollary \ref{cor: main}, we have
	\begin{align*}
		&\Ep[\rho_{\bs\epsilon^d}\mathds{1}\{\mathcal A_d\}]\\
		&\quad\lesssim\Big(f_{d+1}^{2/3} + (d+1)^{2/3} + 1\Big)\left\{k^2\left(\frac{B_n^2\ln^5(p n)}{n}\right)^{\frac{1}{4}}+k\delta\right\}.
	\end{align*}
	Consequently,
	$$
	f_d \lesssim   f_{d+1}^{2/3} + (d+1)^{2/3} + 1,\quad\text{for  }d = 0,\ldots,D-1.
	$$
	Applying Lemma \ref{lem: closing} we have $f_D = 1$ since $B_n\geq 1$ by assumption. Therefore, using a straightforward induction argument, we conclude that 
	$f_d \lesssim  d$, for $d = 0,\ldots,D$.
	In particular, this implies
	$$
	\rho_{\bs\epsilon^0}\mathds{1}\{\mathcal A_0\} = \Ep[\rho_{\bs\epsilon^0}\mathds{1}\{\mathcal A_0\}] \lesssim
	k^2\left(\frac{B_n^2\ln^5(p n)}{n}\right)^{\frac{1}{4}}+k\delta.
	$$
	Since $\mathcal A_0$ holds by construction, we have $\mathds{1}\{\mathcal A_0\} = 1$. Combining this inequality with the definition of $\rho_{\bs\epsilon^0}$ yields the desired bound. \rulex
	
	\vspace{2ex}

	\subsection{Proof of  Proposition \ref{Proposition: anticoncentration}}
	Throughout	the proof,
	we use ``$J_1 \Rightarrow J_2$'' to denote that $J_1$ is a sufficient condition for $J_2$, and ``$J_1 \Leftrightarrow J_2$'' to indicate that $J_1$ and $J_2$ are logically equivalent.

	In order to obtain the anti-concentration bound for
	$ Y_{(k)} $ in \cite{DAB}, an auxiliary random variable is defined in \cite{DAB}, as provided below.
	For a non-empty subset $A\subseteq \Omega_p:=\{1,\ldots,p\}$, let $\iota(A)$ be a randomly chosen, uniformly distributed, element of $A$. In addition, it is assumed to be independent of $\bs{Y}$ and $\iota(A')$ for other $A'\subseteq \Omega_p$. Let $A^*\in \underset{A\subseteq \Omega_p,|A|=k }{\operatorname{argmax}}\suml_{j\in A}Y_j$ be an element chosen uniformly at random from the argmax set if it is not a singleton.  Abbreviate $\iota(A^*)$ as $\iota^*$.
	Thus $Y_{\iota^*}$ is a randomized relative of $Y_{(k)}$   and $\Pr(Y_{\iota^*}=Y_{(k)} )\ge 1/k$. For each Borel measurable subset $B\subseteq\RR$, it can be obtained that
	\begin{equation}\label{coupling inequality}
		\begin{aligned}
			\Pr(Y_{\iota^*}\in B)
			\ge \Pr(Y_{\iota^*}=Y_{(k)},Y_{(k)}\in B)
			=\Pr(Y_{\iota^*}=Y_{(k)})\Pr(Y_{(k)}\in B)
			\ge \frac{1}{k}\Pr(Y_{(k)}\in B).
		\end{aligned}
	\end{equation}
	
	Such a randomization forms the basis for the decomposition of the corresponding density, that is $\tilde{f}_{k}(w)=\phi(w)\tilde{G}_{k}(w)$, where $\tilde{G}_{k}(w)$ is non-decreasing. This recovers the applicability of the methods in \cite{CAB} so as to provide an anti-concentration bound for $Y_{\iota^*}$ and even $Y_{(k)}$. We state the decomposition formally in the following lemma.
	
	\begin{lemma}[]\label{Lemma:density formula}
		Let $\bs{W}=(W_1,\ldots , W_p)^{\rm T}$ be a (not necessarily centered) Gaussian random vector in $\mathbb{R}^p$ with $\Var(W_j) = 1$ for $1 \le j \le p$. Then the distribution of  $\widetilde{W}_{(k)}:=W_{\iota^*}$ is absolutely continuous with respect to the Lebesgue measure and a version of the density is given by
		\begin{align}\label{eq:density factorization}
			\tilde{f}_{k}(w)=\phi(w)\tilde{G}_{k}(w),
		\end{align}
		where
		\begin{align*}
			\tilde{G}_{k}(w)=\frac{1}{k}\suml_{j=1}^p{\rm e}^{\Ep[W_j]w-\Ep[W_j])^2/2 }\cdot \Pr\left(j \in A^{*} \mid W_{j}=w\right).
		\end{align*}
		Moreover, suppose that $\Ep[W_j]\ge 0$ for $1 \le j \le p$, then the map $w\mapsto \tilde{G}_{k}(w)$ is non-decreasing on $\RR$.
	\end{lemma}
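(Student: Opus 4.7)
The plan is first to derive the density identity \eqref{eq:density factorization} from the distributional definition of $\widetilde{W}_\kappa$, and then to prove monotonicity by showing that each summand in $\tilde G_\kappa$ is separately non-decreasing.

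For the density formula, I would first note that conditional on $\bs{W}$ the index $\iota^{*}$ is uniformly distributed on $A^{*}$, so after averaging over the uniform choice of $A^{*}$ in the event that the argmax is not unique, $\Pr(\iota^{*}=j\mid\bs{W})=\mathds{1}\{j\in A^{*}\}/\kappa$. Consequently,
\begin{align*}
\Pr(\widetilde{W}_\kappa\le w)
&=\frac{1}{\kappa}\sum_{j=1}^{p}\Pr(j\in A^{*},\,W_{j}\le w)\\
&=\frac{1}{\kappa}\sum_{j=1}^{p}\int_{-\infty}^{w}\phi(u-\Ep[W_{j}])\,\Pr(j\in A^{*}\mid W_{j}=u)\,du,
\end{align*}
where the second equality uses $W_j\sim N(\Ep[W_j],1)$ together with a regular conditional version of $\Pr(j\in A^{*}\mid W_j)$. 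Applying the Gaussian tilting identity $\phi(u-\mu)=\phi(u)e^{\mu u-\mu^{2}/2}$ rewrites the integrand as $\phi(u)$ times the $j$th summand in $\kappa\tilde G_\kappa$, and differentiating in $w$ yields $\tilde f_\kappa(w)=\phi(w)\tilde G_\kappa(w)$.

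For the monotonicity claim, the factor $e^{\Ep[W_j]w-\Ep[W_j]^{2}/2}$ is non-negative and non-decreasing in $w$ whenever $\Ep[W_j]\ge 0$. Since a product of two non-negative non-decreasing functions is non-decreasing, and a sum of such functions is non-decreasing, it suffices to show that $w\mapsto\Pr(j\in A^{*}\mid W_{j}=w)$ is non-decreasing for each fixed $j$. Up to a set of ties of measure zero, $\{j\in A^{*}\}$ coincides with $\{N_j\le\kappa-1\}$, where $N_j:=|\{k\ne j:W_k>W_j\}|$. Conditional on $W_j=w$, the standard Gaussian decomposition writes
\begin{equation*}
W_k=\Ep[W_k]+c_{jk}(w-\Ep[W_j])+\sqrt{1-c_{jk}^{2}}\,Y_k,\qquad k\ne j,
\end{equation*}
with $c_{jk}=\mathrm{Cov}(W_j,W_k)\in[-1,1]$ and $\bs{Y}_{-j}=(Y_k)_{k\ne j}$ a centered Gaussian vector whose joint law does not depend on $w$. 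The event $\{W_k>w\}$ then becomes $\{Y_k>t_k(w)\}$ with threshold $t_k(w)=((1-c_{jk})w-\Ep[W_k]+c_{jk}\Ep[W_j])/\sqrt{1-c_{jk}^{2}}$, whose slope $\sqrt{(1-c_{jk})/(1+c_{jk})}$ in $w$ is non-negative. Using a common realization of $\bs{Y}_{-j}$ as a coupling, the random set $\{k\ne j:W_k>w\}$ is pointwise non-increasing in $w$, so $N_j$ is non-increasing and $\Pr(N_j\le\kappa-1\mid W_j=w)$ is non-decreasing.

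The main technical obstacle will be handling the degenerate cases $|c_{jk}|=1$, where $\sqrt{1-c_{jk}^{2}}$ vanishes and the decomposition above is not directly meaningful. In such cases, $W_k=\Ep[W_k]+c_{jk}(w-\Ep[W_j])$ almost surely given $W_j=w$, and a direct check shows that $\mathds{1}\{W_k>w\mid W_j=w\}$ is either constant in $w$ or is a single step function of the form $\mathds{1}\{w<\text{const}\}$, hence still non-increasing, so the coupling argument extends by partitioning the indices according to whether $|c_{jk}|<1$ or $|c_{jk}|=1$. Atoms of ties at the $\kappa$th order statistic that may arise when the covariance is singular occur only on a Lebesgue-null set of values of $w$, and therefore do not affect the almost-everywhere identification of the density; alternatively, a small independent Gaussian perturbation followed by a limiting argument removes the degeneracy entirely.
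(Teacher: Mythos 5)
Your proof is correct and follows essentially the same route as the paper. The density derivation via $\Pr(\iota^{*}=j\mid\bs{W})=\Pr(j\in A^{*}\mid\bs{W})/\kappa$ and Gaussian tilting $\phi(u-\mu)=\phi(u)e^{\mu u-\mu^2/2}$ is identical, and your monotonicity argument — project out $W_j$, represent the remaining coordinates as an affine function of $w$ plus a $w$-free residual, and observe that the threshold for $\{W_k>w\}$ has non-negative slope — is the same idea as the paper's, which works with the unnormalized residual $V_{jl}=Z_l-\Ep[Z_jZ_l]Z_j$ and the equivalent slope condition $\Ep[Z_jZ_l]-1\le 0$. Your characterization via $N_j\le\kappa-1$ and the paper's via $\min_{l\in A}W_l\le W_j$ over all $\kappa$-subsets $A$ are equivalent. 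The one genuine addition in your write-up is the explicit treatment of the degenerate case $|c_{jk}|=1$, which the paper's unnormalized formulation avoids automatically (since the slope $\Ep[Z_jZ_l]-1\le 0$ remains well-defined without dividing by $\sqrt{1-c_{jk}^2}$); your partition-or-perturbation workaround is sound, but it is effort the paper's parameterization makes unnecessary.
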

	
	\noindent{\bf Proof of Lemma \ref{Lemma:density formula}. }
	For each Borel measurable subset $B$ of $\RR$, the absolute continuity of the distribution of $\widetilde{W}_{(k)}$ follows from the fact that $\Pr(\widetilde{W}_{(k)} \in B) \le \suml_{j=1}^p\Pr(W_j\in B)$.  Following the proof of Lemma 5 in \cite{CAB} and Lemma 1 in \cite{DAB}, the following expression
	$$\tilde{f}_{k}(w)=\phi(w)\suml_{j=1}^p{\rm e}^{\Ep[W_j]w-(\Ep[W_j])^2/2 }\cdot\Pr(j=\iota^*|W_j=w)$$
	is well-defined.
	
	Moreover, note that $\{w<W_{\iota^*}\le w+\epsilon\}=\bigcup\limits_{j=1}^p\{\iota^*=j\text{ and }w<W_j\le w+\epsilon\}$ and the events in the union are disjoint. Hence we have
	\begin{align*}
		\Pr\left(j=\iota^{*} \mid W_{j}=w\right)&=\Pr\left(j \in A^{*}, j=\iota^{*} \mid W_{j}=w\right)\\
		&=\Pr(j=   \left.\iota^{*} \mid j \in A^{*}, W_{j}=w\right) \Pr\left(j \in A^{*} \mid W_{j}=w\right)\\
		&=\frac{1}{k} \Pr\left(j \in A^{*} \mid W_{j}=w\right),
	\end{align*}
	and
	\begin{align*}
		\Pr(w<W_{\iota^*}\le w+\epsilon)&=\suml_{j=1}^p\Pr(\iota^*=j\text{ and }w<W_j\le w+\epsilon)\\
		&=\suml_{j=1}^p\dint_{w}^{w+\epsilon}\Pr(j=\iota^*|W_j=u)\phi(u-\mu_j)\df u.
	\end{align*}
	Therefore,
	\begin{align*}
		\tilde{f}_{k}(w)&=\suml_{j=1}^p\Pr(j=\iota^*|W_j=w)\phi(w-\mu_j)
		\\
		&=\phi(w)\suml_{j=1}^p{\rm e}^{\mu_j w-\mu_j^2/2}\cdot\Pr(j=\iota^*|W_j=w)\\
		&=\frac{1}{k}\phi(w)\suml_{j=1}^p{\rm e}^{\mu_j w-\mu_j^2/2}\cdot \Pr\left(j \in A^{*} \mid W_{j}=w\right)\\
		&=\phi(w)\tilde{G}_{k}(w).
	\end{align*}
	
	Now we show the second claim. Since $\mu_j=\Ep[W_j]\ge 0$, the map $w\mapsto \exp\{\Ep[W_j]w-(\Ep[W_j])^2/2\} $ is non-decreasing. Thus it suffices to show that the map
	\begin{align}\label{non-decreasing map}
		w\mapsto \Pr\left(j \in A^{*} \mid W_{j}=w\right)
	\end{align}
	is non-decreasing. Note that
	\begin{align*}
		\Pr\left(j \in A^{*} \mid W_{j}=w\right)=\Pr\big(\minl _{l \in A}W_{l} \le W_{j}  \text{ for  } |A|=k,  A \subseteq\{1,\ldots,p\}\,\big| W_{j}=w \big).
	\end{align*}
	Let $\bar{W}_j=W_j-\Ep(W_j)=W_j-\mu_j$, so that $\bar{W}_j, j=1,\ldots,p$ are standard Gaussian random variables. Let $W^o_{j_1 j_2}=\bar{W}_{j_2}-\Ep(\bar{W}_{j_1}\bar{W}_{j_2})\bar{W}_{j_1}$ be the residual from the orthogonal projection of $\bar{W}_{j_2}$ on $\bar{W}_{j_1}$, for $1\le j_1,j_2\le p$. It is  independent of $\bar{W}_j$ or $W_j$ since ${\bs W}$ is jointly Gaussian. Furthermore, for given set  $A\subseteq \{1,\ldots,p\}$ and $1\le j_1 \le p$, based on condition that $W_{j_1}=w$,
	\begin{align*}
		\minl _{j_2 \in A}W_{j_2} \le W_{j_1}&\Leftrightarrow \minl _{j_2 \in A}\{\bar{W}_{j_2}+\mu_{j_2}\} \le w\\
		&\Leftrightarrow\minl _{j_2 \in A}\{W^o_{j_1 j_2} +\Ep[\bar{W}_{j_1}\bar{W}_{j_2}]\bar{W}_{j_1}+\mu_{j_2} \} \le w\\
		&\Leftrightarrow \minl _{j_2 \in A}\{W^o_{j_1 j_2} +\Ep[\bar{W}_{j_1}\bar{W}_{j_2}](w-\mu_{j_1})+\mu_{j_2} \} \le w\\
		&\Leftrightarrow  \minl _{j_2 \in A}\{(\Ep[\bar{W}_{j_1}\bar{W}_{j_2}]-1)w+W^o_{j_1 j_2}-\Ep[\bar{W}_{j_1}\bar{W}_{j_2}]\mu_{j_1}+\mu_{j_2} \} \le 0.
	\end{align*}
	By the independence of  $\left\{W^o_{j_1 j_2}:1\le  j_2\le p\right\}$  from  $W_{j_1} $ and the inequality $\Ep\left[\bar{W}_{j_2} \bar{W}_{j_1}\right]-1\le 0$,  the map (\ref{non-decreasing map})  is   non-decreasing. \rulex
	
	\vspace{2ex}

	To show Proposition \ref{Proposition: anticoncentration}, we also need the Lemma \ref{GCI}; see Theorem 7.1 in \cite{Ledoux2001} for its proof.
	
	\begin{lemma}[Gaussian Concentration Inequality]\label{GCI}
		Let $(Y_1,\ldots, Y_p)^{\rm T}$ be a centered Gaussian random vector in $\RR^p$ with $\maxl_{1 \le j \le p}\Ep[Y_j^2]\le \sigma^2$ for some $\sigma^2>0$. Then for $r > 0$,
		\begin{equation*}
			\Pr\big (\max_{1 \le j \le p} Y_{j}\ge \Ep \big[ \max_{1 \le j \le p} Y_{j}\big] +r\big )\le {\rm e}^{-r^2/(2\sigma^2)}.
		\end{equation*}	
	\end{lemma}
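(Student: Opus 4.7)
The plan is to reduce the statement to the standard Gaussian concentration inequality for Lipschitz functions applied to a standard normal vector. I would begin by writing $\bs{Y}=A\bs{Z}$, where $\bs{Z}\sim N(0,I_m)$ for some $m\ge 1$ and $A\in\mathbb{R}^{p\times m}$ is a factor of the covariance matrix $\Sigma=\Ep[\bs{Y}\bs{Y}^\top]=AA^\top$ (which exists because $\Sigma$ is positive semidefinite; one may take $A=\Sigma^{1/2}$ with $m=p$). Denoting the $j$th row of $A$ by $a_j\in\mathbb{R}^m$, the statistic of interest becomes
\begin{equation*}
F(\bs{z}):=\max_{1\le j\le p}(A\bs{z})_j=\max_{1\le j\le p}\langle a_j,\bs{z}\rangle,
\end{equation*}
so that $\max_{1\le j\le p}Y_j\stackrel{d}{=}F(\bs{Z})$.

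Next, I would check that $F$ is Lipschitz with constant at most $\sigma$. For any $\bs{z},\bs{z}'\in\mathbb{R}^m$, the standard inequality $|\max_j u_j-\max_j v_j|\le \max_j|u_j-v_j|$ combined with Cauchy--Schwarz gives
\begin{equation*}
|F(\bs{z})-F(\bs{z}')|\le \max_{1\le j\le p}|\langle a_j,\bs{z}-\bs{z}'\rangle|\le \Big(\max_{1\le j\le p}\|a_j\|_2\Big)\|\bs{z}-\bs{z}'\|_2.
\end{equation*}
Since $\|a_j\|_2^2=\Sigma_{jj}=\Ep[Y_j^2]\le\sigma^2$ by hypothesis, the Lipschitz constant of $F$ is bounded by $\sigma$.

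The final step is to apply the Gaussian concentration inequality for Lipschitz functions of a standard Gaussian vector (Theorem 7.1 in \cite{Ledoux2001}), which states that for any $L$-Lipschitz $F:\mathbb{R}^m\to\mathbb{R}$ and $\bs{Z}\sim N(0,I_m)$,
\begin{equation*}
\Pr\bigl(F(\bs{Z})\ge \Ep[F(\bs{Z})]+r\bigr)\le \exp\bigl(-r^2/(2L^2)\bigr),\quad r>0.
\end{equation*}
Substituting $L\le\sigma$ and noting $\Ep[F(\bs{Z})]=\Ep[\max_{1\le j\le p}Y_j]$ yields the claimed bound.

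The main obstacle is the underlying concentration inequality for Lipschitz functions itself, which is a deep fact. It is classically established either via Gaussian isoperimetry (symmetrization and the Ehrhard inequality) or, more analytically, by Herbst's argument applied to the Gaussian log-Sobolev inequality: one controls the moment generating function of $F(\bs{Z})-\Ep[F(\bs{Z})]$ by bounding its entropy using $\|\nabla F\|_2\le L$ almost everywhere (valid for Lipschitz $F$), then integrates a differential inequality and applies Markov. Since this ingredient is precisely the content of Theorem 7.1 in \cite{Ledoux2001}, the present lemma is really a direct consequence of it once the reduction to a Lipschitz function of a standard Gaussian is made explicit; no further quantitative work is needed beyond the elementary Lipschitz estimate above.
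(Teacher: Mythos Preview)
Your proposal is correct and aligns with the paper's treatment: the paper does not supply a proof but simply cites Theorem~7.1 in \cite{Ledoux2001}, and your argument is precisely the standard reduction (writing $\bs{Y}=A\bs{Z}$ and checking that $\bs{z}\mapsto\max_j\langle a_j,\bs{z}\rangle$ is $\sigma$-Lipschitz) that makes that citation explicit. There is nothing to add beyond noting that you have spelled out what the paper leaves implicit.
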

	Observe that when $r\le 0$, $r_+=0$, then the right-hand side equals 1. Hence we have a simple consequence of this inequality: for
	$r\in \RR$ and $1\le k\le p$,
	\begin{equation}\label{eq:GCI-k}
		\begin{aligned}
			\Pr \big( Y_{(k)}\ge \Ep[\|\bs{Y}\|_\infty]+r \big)&\le \Pr \big( Y_{(1)}\ge \Ep[\|\bs{Y}\|_\infty]+r \big)\\
			&\le\Pr\left ( Y_{(1)}\ge \Ep \left [  Y_{(1)}\right ] +r\right )\le {\rm e}^{-r_+^2/(2\sigma^2)}.
		\end{aligned}
	\end{equation}
	
	\vspace{2ex}
	
	We are now in position to prove Proposition \ref{Proposition: anticoncentration}.
	
	\noindent{\bf   Proof of Proposition \ref{Proposition: anticoncentration}}
	Following the proof of Theorem 3 in \cite{CAB}, we also show Proposition \ref{Proposition: anticoncentration} using three steps.
	
	{\bf Step 1:} This step reduces the reasoning to the unit variance case. Choose a $y\ge 0$ and define $W_j:=(Y_j-y)/\sigma_j+y/\uline{\sigma}$. It is clear that $\mu_j:=\Ep[W_j]=y/\underline{\sigma}-y/\sigma_j\ge 0$ and $\Var(W_j)=1$. Let $\{i_1,\ldots,i_p\}$ be a permutation of $\{1,\ldots,p\}$ such that $Y_{i_1}\ge Y_{i_2}\ge\ldots\ge Y_{i_k}\ge\ldots \ge Y_{i_p}$, i.e., $Y_{(k)}=Y_{i_k}$. It is easy to obtain that $\{Y_{(k)}-y|\le \epsilon\}\subseteq \{W_{(k)}-y/\underline{\sigma}|\le \frac{\epsilon}{\underline{\sigma}}\}$. Note that for $k\le j\le p$,
	\begin{align*}
		Y_{i_k}\le y+\epsilon\Rightarrow Y_{i_j}\le y+\epsilon\le y+\epsilon\sigma_{i_j}/\underline{\sigma}  \Leftrightarrow\dfrac{Y_{i_j}-y}{\sigma_{i_j}}\le \dfrac{\epsilon}{\underline{\sigma}}\Rightarrow W_{(k)}-y/\underline{\sigma}\le \dfrac{\epsilon}{\underline{\sigma}};
	\end{align*}
	and for $1\le j\le k$,
	\begin{align*}
		Y_{i_k}\ge y-\epsilon\Rightarrow Y_{i_j}\ge y-\epsilon\ge y-\epsilon\sigma_{i_j}/\underline{\sigma} \Leftrightarrow\dfrac{Y_{i_j}-y}{\sigma_{i_j}}\ge \dfrac{-\epsilon}{\underline{\sigma}}\Rightarrow W_{(k)}-y/\underline{\sigma}\ge \dfrac{-\epsilon}{\underline{\sigma}}.
	\end{align*}
	Combing these two inequalities lead to
	\begin{align}\label{eq:Step 1}
		\Pr(|Y_{(k)}-y|\le \epsilon)\le \Pr\left(|W_{(k)}-y/\underline{\sigma}|\le \frac{\epsilon}{\underline{\sigma}}\right)\le\sup\limits_{x \in \RR}\Pr\left(|W_{(k)}-x|\le \frac{\epsilon}{\underline{\sigma}}\right).
	\end{align}
	
	{\bf Step 2}: This step bounds the density of $\widetilde{W}_{(k)}$, which is the auxiliary random variable defined in \cite{DAB} as a randomized relative of $W_{(k)}$.
	Note that $\Var(W_j)=1$ for $1\le j\le p$. According to Lemma \ref{Lemma:density formula}, $\widetilde{W}_{(k)}$ has density of the form
	\begin{align*}
		\tilde{f}_{k}(w)=\phi(w)\tilde{G}_{k}(w),
	\end{align*}
	where the map $w\mapsto \tilde{G}_{k}(w)$  is non-decreasing.
	Define $\bar{w}:=(1/\underline{\sigma}-1/\overline{\sigma} )|y|\ge 0$, so that $\mu_j\le \bar{w}$ for $1 \le j \le p$. Moreover, define $\bar{W}:=\maxl_{1 \le j \le p}(W_j-\mu_j)$ and $\bar{a}_p:=\Ep\big[ \maxl_{1 \le j \le p} |Y_j/\sigma_j|\big]=\Ep\big[ \maxl_{1 \le j \le p} |W_j-\mu_j|\big]$. Then we have
	\begin{align*}
		\int_{w}^{\infty}\phi(t)\tilde{G}_{k}(w)\dt&\le	\int_{w}^{\infty}\phi(t)\tilde{G}_{k}(t)\dt=\Pr(\widetilde{W}_{(k)}>w)\le \Pr(W_{(1)}>w)\\
		&\le\Pr(\bar{W}\ge w-\bar{w})=\Pr(\bar{W}\ge\bar{a}_p+w-\bar{w}-\bar{a}_p)\\
		&\le\exp\left\{-\frac{(w-\bar{w}-\bar{a}_p)_+^2}{2}\right\},
	\end{align*}
	where the second inequality is due to $\widetilde{W}_{(k)}\le W_{(1)}$, the last inequality is due to (\ref{eq:GCI-k}) by setting $r=w-\bar{w}-\bar{a}_p$. Therefore, for $w\in\RR$,
	\begin{align}\label{eq:bound on Gk}
		\tilde{G}_{k}(w)\le \frac{1}{1-\Phi(w)}\exp\left\{-\frac{(w-\bar{w}-\bar{a}_p)_+^2}{2}\right\}.
	\end{align}
	For $w>0$, the Mill's inequality implies the univariate Mills ratio $$M(w)=\phi(w)\(\int_{w}^{\infty}\phi(t)\dt\)^{-1}=\dfrac{\phi(w)}{1-\Phi(w)}\in\left[w,w\dfrac{1+w^2}{w^2}\right]. $$
	When $w > 1$, $(1+w^2)/w^2\le 2$. On the other hand, $\phi(w)/(1-\Phi(w))\le\phi(1)/(1-\Phi(1))<1.53<2$ for $w<1$. Therefore,
	\begin{align*}
		\phi(w)/(1-\Phi(w))\le 2(w\vee 1), \,  w\in \RR.
	\end{align*}
	Combining the preceding inequality, (\ref{eq:density factorization}), and (\ref{eq:bound on Gk}) yields
	\begin{align*}
		\tilde{f}_{k}(w)\le 2(w\vee 1)\exp\left\{-\frac{(w-\bar{w}-\bar{a}_p)_+^2}{2}\right\},\,  w\in \RR.
	\end{align*}
	{\bf Step 3}: By Step 2, for $x\in\RR$ and $t > 0$, we obtain
	\begin{align*}
		\Pr(|\widetilde{W}_{(k)}-x|\le t)=\int_{x-t}^{x+t}\tilde{f}_{k}(w)\df w\le 2t\max_{|w-x|\le t}\tilde{f}_{k}(w)\le 4t(\bar{w}+\bar{a}_p+1),
	\end{align*}
	where the last inequality due to that the map $w\mapsto w{\rm e}^{-(w-a)^2/2}$ (with $a \ge 0$) is non-increasing on $[a+1,\infty)$. Choose $a=\bar{w}+\bar{a}_p\ge 0$. Then we have the following inequalities:
	\begin{align*}
		&w\ge a+1\Rightarrow w{\rm e}^{-(w-a)^2/2}\le (a+1){\rm e}^{-1/2}<a+1,\\
		&a\le w\le a+1\Rightarrow w{\rm e}^{-(w-a)^2/2}\le w\le a+1,\\
		&w\le a\Rightarrow \tilde{f}_{k}(w)\le 2(w\vee 1)\le 2(a\vee 1)<2(a+1),\\
		&w\ge a\vee1\Rightarrow\tilde{f}_{k}(w)\le 2w{\rm e}^{-(w-a)^2/2}\le 2(a+1),
	\end{align*}
	and
	$$ a\le w\le 1\Rightarrow \tilde{f}_{k}(w)\le 2{\rm e}^{-(w-a)^2/2}\le 2\le 2(a+1).$$
	
	Combining these inequalities yields $\tilde{f}_{k}(w)\le 2(a+1)$. This, along with Step 1 and inequality (\ref{coupling inequality}) leads to
	\begin{equation}\label{eq:Step 3}
		\begin{aligned}
			\Pr(|Y_{(k)}-y|\le \epsilon)&\le\sup\limits_{x \in \RR}\Pr\left(|W_{(k)}-x|\le \frac{\epsilon}{\underline{\sigma}}\right)\\
			&\le k\sup\limits_{x \in \RR}\Pr\left(|\widetilde{W}_{(k)}-x|\le \frac{\epsilon}{\underline{\sigma}}\right)\\
			&\le 4k\epsilon\{(1/\uline{\sigma}-1/\overline{\sigma})|y|+\bar{a}_p+1\}/\uline{\sigma},
		\end{aligned}
	\end{equation}
	for $y\ge 0$ and $\epsilon > 0$.
	
	For the case of $y< 0$, the proof is similar by making a slight modification. Let $W_j:=(Y_j-y)/\sigma_j+y/\overline{\sigma},\mu_j:=\Ep[W_j]=y/\overline{\sigma}-y/\sigma_j,0\le \mu_j\le \bar{w},\Var(W_j)=1$. By the similar argument, (\ref{eq:Step 1}) becomes
	\begin{align*}
		\Pr(|Y_{(k)}-y|\le \epsilon)\le \Pr\left(|W_{(k)}-y/\overline{\sigma}|\le \frac{\epsilon}{\underline{\sigma}}\right)\le\sup\limits_{x\in \RR}\Pr\left(|W_{(k)}-x|\le \frac{\epsilon}{\underline{\sigma}}\right).
	\end{align*}
	It is easy to see that  (\ref{eq:Step 3}) also holds for $y < 0$, so that  (\ref{eq:Step 3}) holds for $y\in\RR$. The above assertions are true by replacing $\bar{a}_p$ with $a_p$. If $\uline{\sigma}=\overline{\sigma}=\sigma$, then for $\epsilon>0$, we have
	\begin{align*}
		\Pr \left( \big|Y_{(k)} - y\big|  \le   \epsilon \right) \le 4k \epsilon (a_p+1)/\sigma,\,  y\in\RR,
	\end{align*}
	which implies the first assertion of Proposition \ref{Proposition: anticoncentration}.
	
	We next consider the case $\uline{\sigma}<\overline{\sigma}$.  Suppose  that $0<\epsilon\le\uline{\sigma}$. By the symmetry of $\{Y_j:1\le j\le p\}$ and the assumption $k\le \lfloor\frac{p+1}{2}\rfloor, Y_{[p-k+1]}\le Y_{(k)}$, we have $\Pr(Y_{(k)}\ge a)=\Pr(Y_{[p-k+1]}\le -a)\ge \Pr(Y_{(k)}\le -a)$ for $a\in \RR$. Further by (\ref{eq:GCI-k}), we have $\Pr(|Y_{(k)}-y|\le \epsilon)\le \Pr(Y_{(k)}\ge y-\epsilon)=\Pr(Y_{(k)}\ge |y|-\epsilon)$ for $y\ge \epsilon+\overline{\sigma}(\bar{a}_p+\sqrt{2\ln(\uline{\sigma}/\epsilon)})$. Similarly, if $y\le -\epsilon-\overline{\sigma}(\bar{a}_p+\sqrt{2\ln(\uline{\sigma}/\epsilon)})$, then $\Pr(|Y_{(k)}-y|\le \epsilon)\le \Pr(Y_{(k)}\le y+\epsilon)\le \Pr(Y_{(k)}\ge -y-\epsilon)=\Pr(Y_{(k)}\ge |y|-\epsilon)$. Moreover $\bar{a}_p=\Ep\big[\maxl_{1\le j\le p}|Y_j/\sigma_j|\big]\ge \Ep\big[\maxl_{1 \le j \le p}|Y_j|/\overline{\sigma}]$. Combining these results  yields
	\begin{equation}\label{eq: |y| larger}
		\begin{aligned}
			\Pr(|Y_{(k)}-y|\le \epsilon)&\le\Pr(Y_{(k)}\ge |y|-\epsilon)\le \Pr(Y_{(k)}\ge \overline{\sigma}(\bar{a}_p+\sqrt{2\ln(\uline{\sigma}/\epsilon)}))\\
			&\le\Pr(Y_{(k)}\ge \Ep\big[\maxl_{1 \le j \le p}|Y_j|\big]+\overline{\sigma}\sqrt{2\ln(\uline{\sigma}/\epsilon)})\le \epsilon/\uline{\sigma}.
		\end{aligned}
	\end{equation}
	For $|y|\le \epsilon+\overline{\sigma}(\bar{a}_p+\sqrt{2\ln(\uline{\sigma}/\epsilon)})$, by (\ref{eq:Step 3}) and the condition $\epsilon\le\uline{\sigma}$, it can be obtained that
	\begin{equation}\label{eq: |y|smaller}
		\begin{aligned}
			\Pr(|Y_{(k)}-y|\le \epsilon)&\le 4k\epsilon\{(\overline{\sigma}/\uline{\sigma})\bar{a}_p+(\overline{\sigma}/\uline{\sigma}-1)\sqrt{2\ln(\uline{\sigma}/\epsilon)}+2-\uline{\sigma}/\overline{\sigma}\}/\uline{\sigma}\\
			&\le 4k\epsilon\cdot\sqrt{2}(\overline{\sigma}/\uline{\sigma}^2)\{\bar{a}_p+\sqrt{\ln(\uline{\sigma}/\epsilon)}+1\}\\
			&\le 8\sqrt{2}(\overline{\sigma}/\uline{\sigma}^2)k\epsilon\{\bar{a}_p+\sqrt{1\vee \ln(\uline{\sigma}/\epsilon)}\}.
		\end{aligned}
		\end{equation}
		Combining (\ref{eq: |y| larger}) and (\ref{eq: |y|smaller}), we derive the inequality in (ii).
		If $\epsilon>\underline\sigma$, the right-hand side in (ii) is larger than one, and hence the same inequality follows trivially.
	
	\vspace{2ex}

	\subsection{Proof of Corollary \ref{cor: anticoncentration}}
	
	Set $\bs Y=\bs Z+\bs\mu$ and $N_t=\sum_{j=1}^p\mathds 1\{Y_j>t\}$.
	For $k=1$, the asserted inequality follows directly from Lemma J.3 of \cite{ICL}, after scaling by $\underline\sigma$, because
	$$
	\{\mathfrak s_1(\bs Z+\bs\mu)\leq x\}=\{\bs Z\leq x\bs 1-\bs\mu\}.
	$$
	Suppose that $k\geq2$, let $\xi_1,\ldots,\xi_p$ be independent Bernoulli$(\pi)$ random variables independent of $\bs Y$, and set $\pi=1/k$.
	Define
	$$
	M_\pi=\max_{j:\xi_j=1}Y_j,
	$$
	where the maximum over the empty set is $-\infty$, and let $H_\pi(t)=\Pr(M_\pi\leq t)$.
	Conditioning on $\bs Y$ and then taking expectation, we have
	\begin{equation}\label{eq: thinning cdf}
	H_\pi(t)=\Ep[(1-\pi)^{N_t}].
	\end{equation}
	
	Observe that
	$$
	x<\mathfrak s_k(\bs Y)\leq x+\epsilon
	\quad\Longleftrightarrow\quad
	N_x\geq k\ \text{and}\ N_{x+\epsilon}\leq k-1.
	$$
	Since $N_{x+\epsilon}\leq N_x$ for every realization of $\bs Y$ and $r\mapsto(1-\pi)^r$ is non-increasing, the random variable
	$$
	D_x=(1-\pi)^{N_{x+\epsilon}}-(1-\pi)^{N_x}
	$$
	is nonnegative.
	
	Moreover, whenever these equivalent conditions hold, $N_{x+\epsilon}\leq k-1$ and $N_x\geq k$, and hence
	$$
	D_x
	\geq (1-\pi)^{k-1}-(1-\pi)^k
	=\pi(1-\pi)^{k-1}.
	$$
	Consequently, \eqref{eq: thinning cdf} gives
	\begin{align*}
	H_\pi(x+\epsilon)-H_\pi(x)
	&=\Ep[D_x]\\
	&\geq\Ep\left[D_x\mathds 1\{x<\mathfrak s_k(\bs Y)\leq x+\epsilon\}\right]\\
	&\geq\pi(1-\pi)^{k-1}\Pr\{x<\mathfrak s_k(\bs Y)\leq x+\epsilon\}.
	\end{align*}
	For $\pi=1/k$, the elementary inequality $(1-1/k)^{k-1}\geq \mathrm e^{-1}$ implies that
	$$
	\pi(1-\pi)^{k-1}
	=\frac{1}{k}\left(1-\frac{1}{k}\right)^{k-1}
	\geq\frac{1}{\mathrm e k}.
	$$
	Rearranging the preceding inequality yields
	\begin{equation}\label{eq: thinning reduction}
	\Pr\{x<\mathfrak s_k(\bs Y)\leq x+\epsilon\}
	\leq \mathrm e k\{H_\pi(x+\epsilon)-H_\pi(x)\}.
	\end{equation}
	Conditional on $\bs\xi=(\xi_1,\ldots,\xi_p)^{\rm T}$, write $S_{\bs\xi}=\{j:\xi_j=1\}$.
	For every nonempty $S_{\bs\xi}$, the selected Gaussian subvector retains the variance lower bound
	$\min_{j\in S_{\bs\xi}}\mathbb E[Z_j^2]\geq\underline\sigma^2$.
	If $|S_{\bs\xi}|\geq2$, Lemma J.3 of \cite{ICL}, applied to $\bs Z_{S_{\bs\xi}}/\underline\sigma$, gives
	\begin{align*}
	&\Pr\left\{\max_{j\in S_{\bs\xi}}(Z_j+\mu_j)\leq x+\epsilon\right\}
	-\Pr\left\{\max_{j\in S_{\bs\xi}}(Z_j+\mu_j)\leq x\right\}\\
	&\qquad\lesssim \frac{\epsilon}{\underline\sigma}\sqrt{1\vee\ln |S_{\bs\xi}|}
	\leq \frac{\epsilon}{\underline\sigma}\sqrt{1\vee\ln p}.
	\end{align*}
	If $S_{\bs\xi}=\{j\}$, then $M_\pi=Z_j+\mu_j$ conditionally on $\bs\xi$.
	Writing $\sigma_j^2=\Ep[Z_j^2]$, the density $f_j$ of $Z_j+\mu_j$ satisfies
	$$
	\sup_{u\in\RR}f_j(u)=\frac{1}{\sqrt{2\pi}\sigma_j}
	\leq\frac{1}{\sqrt{2\pi}\underline\sigma}.
	$$
	Therefore,
	\begin{align*}
	&\Pr(M_\pi\leq x+\epsilon\mid\bs\xi)-\Pr(M_\pi\leq x\mid\bs\xi)\\
	&\qquad=\int_x^{x+\epsilon}f_j(u)\df u
	\leq\frac{\epsilon}{\sqrt{2\pi}\underline\sigma}
	\lesssim\frac{\epsilon}{\underline\sigma}\sqrt{1\vee\ln p}.
	\end{align*}
	If $S_{\bs\xi}$ is empty, then $M_\pi=-\infty$ by convention, so both conditional distribution functions equal one and their increment is zero.
	Thus, for every realization of $\bs\xi$,
	\begin{align*}
	&\Pr(M_\pi\leq x+\epsilon\mid\bs\xi)-\Pr(M_\pi\leq x\mid\bs\xi)\\
	&\qquad\lesssim\frac{\epsilon}{\underline\sigma}\sqrt{1\vee\ln p}.
	\end{align*}
	Taking expectation with respect to $\bs\xi$ and using the law of total probability, we obtain
	\begin{align*}
	H_\pi(x+\epsilon)-H_\pi(x)
	&=\Ep_{\bs\xi}\left[\Pr(M_\pi\leq x+\epsilon\mid\bs\xi)-\Pr(M_\pi\leq x\mid\bs\xi)\right]\\
	&\lesssim\frac{\epsilon}{\underline\sigma}\sqrt{1\vee\ln p}.
	\end{align*}
	Combining this inequality with \eqref{eq: thinning reduction} gives
	$$
	\sup_{x\in\RR}\Pr\{x<\mathfrak s_k(\bs Z+\bs\mu)\leq x+\epsilon\}
	\lesssim\frac{k\epsilon}{\underline\sigma}\sqrt{1\vee\ln p},
	$$
	which proves the one-sided bound.
	Finally, for every $a\in\RR$,
	$$
	\{\mathfrak s_k(\bs Z+\bs\mu)=a\}
	\subseteq\bigcup_{j=1}^p\{Z_j+\mu_j=a\}.
	$$
	Each coordinate $Z_j+\mu_j$ is a Gaussian random variable with positive variance, and hence the probability of the event on the right-hand side is zero.
	Thus $\mathfrak s_k(\bs Z+\bs\mu)$ has no atoms.
	For every $x\in\RR$, it follows that
	\begin{align*}
	\Pr\{|\mathfrak s_k(\bs Z+\bs\mu)-x|\leq\epsilon\}
	&=\Pr\{x-\epsilon<\mathfrak s_k(\bs Z+\bs\mu)\leq x+\epsilon\}\\
	&\lesssim\frac{2k\epsilon}{\underline\sigma}\sqrt{1\vee\ln p}.
	\end{align*}
	Taking the supremum over $x$ and absorbing the factor two into the implicit constant proves the asserted bound for the L\'{e}vy concentration function.
	\rulex
	
	\vspace{2ex}

	\subsection{Proof of Proposition \ref{thm: stein kernel} }

	We adopt the same notations as in the proof of Lemma \ref{lem: main}, such as the constant $\phi>0$, the functions $\mathfrak{m}$ and $\mathfrak{h} $, and the partial derivatives $\mathfrak{m}_{j_1 j_2} $. Here and in what follows, without loss of generality, we will assume that $\bs{V}$ and $\bs{Z}$ are independent.

	Denote
	$$
	\mathcal I(\bs y)=\mathfrak m(\bs V,\bs y)-\mathfrak m(\bs Z,\bs y),\qquad \bs y\in\RR^p.
	$$
	The smoothing argument used in Step 2 of the proof of Lemma \ref{lem: main}, together with Corollary \ref{cor: anticoncentration}, gives, for every $\phi\geq1$,
	\begin{equation}\label{eq: stein new smoothing}
	\sup_{x\in\RR}\left|\Pr(V_{(k)}\leq x)-\Pr(Z_{(k)}\leq x)\right|
	\lesssim k^2\phi^{-1}\sqrt{1\vee\ln p}+\sup_{\bs y\in\RR^p}|\Ep[\mathcal I(\bs y)]|.
	\end{equation}
	Fix $\bs y\in\RR^p$, let $\bs W_t=\sqrt t\bs V+\sqrt{1-t}\bs Z$, and define
	$$
	\Psi(t)=\Ep[\mathfrak m(\bs W_t,\bs y)],\qquad t\in[0,1].
	$$
	The Stein identity and Gaussian integration by parts give
	$$
	|\Psi'(t)|\lesssim k^{-1}\phi^2(\ln p)
	\Ep\left[\mathfrak h\left(\bs y(\bs V,t),\bs Z,\frac{k}{\phi\sqrt{1-t}}\right)
	\max_{1\leq j_1,j_2\leq p}|\tau_{j_1j_2}(\bs V)-\Delta_{j_1j_2}|\right],
	$$
	where $\bs y(\bs V,t)=(1-t)^{-1/2}(\bs y-\sqrt t\bs V)$.
	Conditional on $\bs V$, Corollary \ref{cor: anticoncentration} applies uniformly to the mean shift $-\bs y(\bs V,t)$ and yields
	$$
	\Ep\left[\mathfrak h\left(\bs y(\bs V,t),\bs Z,\frac{k}{\phi\sqrt{1-t}}\right)\bigg|\bs V\right]
	\lesssim \frac{k^2}{\phi\sqrt{1-t}}\sqrt{1\vee\ln p}.
	$$
	Consequently,
	$$
	|\Psi'(t)|\lesssim \frac{k\phi m_\delta\ln^{3/2}p}{\sqrt{1-t}}.
	$$
	Since $\int_0^1(1-t)^{-1/2}\dt=2$, we obtain
	\begin{equation}\label{eq: stein new interpolation}
	\sup_{\bs y\in\RR^p}|\Ep[\mathcal I(\bs y)]|
	\lesssim k\phi m_\delta\ln^{3/2}p.
	\end{equation}
	Combining \eqref{eq: stein new smoothing} and \eqref{eq: stein new interpolation} gives
	$$
	\sup_{x\in\RR}\left|\Pr(V_{(k)}\leq x)-\Pr(Z_{(k)}\leq x)\right|
	\lesssim k^2\phi^{-1}\sqrt{\ln p}+k\phi m_\delta\ln^{3/2}p.
	$$
	If $m_\delta=0$, letting $\phi\to\infty$ proves the claim.
	Otherwise, take $\phi^2=k/(m_\delta\ln p)$.
	When this choice is smaller than one, the asserted bound is trivial because its right-hand side is larger than one.
	Substitution in the remaining case proves the proposition.
	\rulex
	
	\vspace{2ex}

	\subsection{Proof of Proposition \ref{coro:g-g-comparison} }
	
	If $\bs V=\bs Z_1$ and $\bs Z=\bs Z_2$, then the multivariate Stein identity gives $\tau\equiv\Delta_1$ and hence $m_\delta=M_\delta$. Proposition \ref{thm: stein kernel} therefore yields the conclusion directly. \rulex
	
	\vspace{2ex}
	
	\subsection{Proof of Corollary \ref{coro:beta-comparison}}
	
	Let $\tau^\varepsilon$ be the univariate Stein kernel of a standardized Beta$(\alpha,\beta)$ random variable.
	The explicit Beta Stein kernel is bounded, and hence  $\sup_{x\in\operatorname{supp}(\varepsilon_i)}|\tau^\varepsilon(x)|\leq C_{\alpha,\beta}$ and $\Ep[\tau^\varepsilon(\varepsilon_i)]=1$, where $C_{\alpha,\beta}>0$ is a constant depending only on $\alpha$ and $\beta$.
	Below, $c_{\alpha,\beta}>0$ denotes another constant depending only on $\alpha$ and $\beta$.
	By Lemma 4.6 in \cite{K19b}, $\bs V$ admits a Stein kernel satisfying
	$$
	\tau^V_{j_1j_2}(\bs V)=\Ep\left[\frac{1}{n}\sum_{i=1}^n\tau^\varepsilon(\varepsilon_i)a_{ij_1}a_{ij_2}\,\bigg|\,\bs V\right].
	$$
	Define
	$$
	S_{j_1j_2}=\frac{1}{n}\sum_{i=1}^n\{\tau^\varepsilon(\varepsilon_i)-1\}a_{ij_1}a_{ij_2},
	\qquad
	\widetilde M=\max_{1\leq j_1,j_2\leq p}|S_{j_1j_2}|.
	$$
	Since $\Ep[\tau^\varepsilon(\varepsilon_i)]=1$ and
	$\Delta_{j_1j_2}=n^{-1}\sum_{i=1}^na_{ij_1}a_{ij_2}$, we have
	$$
	\tau^V_{j_1j_2}(\bs V)-\Delta_{j_1j_2}
	=\Ep[S_{j_1j_2}\mid\bs V].
	$$
	
	Since the maximum norm is convex, conditional Jensen's inequality gives
	\begin{align*}
	m_\delta
	&=\Ep\left[\max_{1\leq j_1,j_2\leq p}
	\left|\Ep[S_{j_1j_2}\mid\bs V]\right|\right]\\
	&\leq\Ep\left[\Ep\left[
	\max_{1\leq j_1,j_2\leq p}|S_{j_1j_2}|
	\,\bigg|\,\bs V\right]\right]
	=\Ep[\widetilde M].
	\end{align*}
	For each pair $(j_1,j_2)$, the sum inside $\widetilde M$ consists of independent centered terms and
	$$
	\frac{1}{n^2}\sum_{i=1}^na_{ij_1}^2a_{ij_2}^2\leq\frac{1}{n^2}\left(\sum_{i=1}^na_{ij_1}^4\right)^{1/2}\left(\sum_{i=1}^na_{ij_2}^4\right)^{1/2}\leq\frac{C_B^2}{n}.
	$$
	Hence, Hoeffding's inequality and a union bound over the $p^2$ pairs imply that
	$$
	\Pr(\widetilde M>s)\leq2p^2\exp\left(-\frac{c_{\alpha,\beta}ns^2}{C_B^2}\right).
	$$
	\par
	Put $a=c_{\alpha,\beta}n/C_B^2$ and
	$s_0=\{\ln(2p^2)/a\}^{1/2}$, so that
	$2p^2\exp(-as_0^2)=1$.
	Since $\widetilde M$ is nonnegative, the tail-integration formula and the trivial bound
	$\Pr(\widetilde M>s)\leq1$ give
	\begin{align*}
	\Ep[\widetilde M]
	&=\int_0^\infty\Pr(\widetilde M>s)\,\mathrm{d}s\\
	&\leq s_0+2p^2\int_{s_0}^\infty\exp(-as^2)\,\mathrm{d}s\\
	&\leq s_0+\frac{2p^2\exp(-as_0^2)}{2as_0}
	=s_0+\frac{1}{2as_0}
	\lesssim C_B\sqrt{\frac{\ln p}{n}},
	\end{align*}
	where the third line uses
	$\int_t^\infty\exp(-as^2)\,\mathrm{d}s\leq\exp(-at^2)/(2at)$, and the last inequality absorbs constants depending only on $\alpha$ and $\beta$.
	Together with the conditional Jensen bound above, this yields
	\begin{equation}\label{eq: beta kernel first moment}
		m_\delta\lesssim C_B\sqrt{\frac{\ln p}{n}}.
	\end{equation}
	The covariance matrix of $\bs Z$ satisfies $\Delta_{j_1j_2}=n^{-1}\sum_i a_{ij_1}a_{ij_2}$ and $\Delta_{jj}\geq C_b$.
	Combining \eqref{eq: beta kernel first moment} with Proposition \ref{thm: stein kernel} gives
	$$
	\sup_{x\in\mathbb R}\Big|\Pr(V_{(k)}\leq x)-\Pr(Z_{(k)}\leq x)\Big|
	\lesssim k^{3/2}\left(\frac{C_B^2\ln^5 p}{n}\right)^{1/4}
	\leq k^{3/2}\left(\frac{C_B^2\ln^5(pn)}{n}\right)^{1/4}.
	$$
	This proves \eqref{eq: beta distribution comparison}.
	\rulex
	
	\vspace{2ex}

	\vspace{2ex}
	\subsection{Proofs  of Theorem  \ref{thm: gaussian approximation main} and Theorem  \ref{cor: rejection probabilities} }
	%
	%
	The following Lemma  \ref{lem: subgaussian growth} and Lemma  \ref{lem: all conditions}, adapted from \cite{ICL}, serve as essential auxiliary results for proving Theorem \ref{thm: gaussian approximation main} and Theorem  \ref{cor: rejection probabilities} .
	\begin{lemma}\label{lem: subgaussian growth}
		Suppose that Condition (KS1) is satisfied. Then
		\begin{equation}\label{eq: subgaussian bound on x}
			\max_{1\leq i\leq n}\|\bs{X}_i\|_{\infty} \leq 5B_n\ln(p n)
		\end{equation}
		with probability at least $1 - 1/(2n^4)$. In addition,
		$$
		\max_{1\leq i\leq n}\Ep\left[\|\bs{X}_i\|_{\infty}^8\right] \lesssim B_n^8\ln^8(p n).
		$$
	\end{lemma}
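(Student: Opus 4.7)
The plan is to exploit Condition $M_1$ directly via the exponential Markov inequality, since the sub-exponentiality of each coordinate controls both the tail and the moments of $\|\bs{X}_i\|_{\infty}$. First I would observe that Condition $M_1$ combined with Markov's inequality applied to $\exp(|X_{ij}|/B_n)$ yields the sub-exponential tail
\[
\Pr(|X_{ij}| > t) \;\le\; 2\exp(-t/B_n), \quad t \ge 0,
\]
uniformly in $i = 1,\dots,n$ and $j = 1,\dots,p$.

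For the first claim, I would take $t = 5 B_n \ln(pn)$ and apply the union bound over all $np$ pairs $(i,j)$ to obtain
\[
\Pr\!\left( \max_{1\le i\le n} \|\bs{X}_i\|_{\infty} > 5 B_n \ln(pn)\right) \;\le\; 2np\cdot (pn)^{-5} \;=\; 2(pn)^{-4}.
\]
Since the standing assumption $p \ge 3$ yields $p^4 \ge 81 \ge 4$, this bound is at most $1/(2n^4)$, which is exactly \eqref{eq: subgaussian bound on x}. No sharper machinery is needed for this part; the routine calculation is that the constant $5$ in the logarithm is precisely what makes the union bound close up with the claimed probability.

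For the moment bound, I would pass through the Orlicz $\psi_1$-norm. Condition $M_1$ gives $\|X_{ij}\|_{\psi_1} \le B_n$ uniformly in $i,j$, and a standard maximal inequality for sub-exponential random variables (see, e.g., \cite{V11,HDP}) yields
\[
\bigl\| \max_{1\le j\le p} |X_{ij}| \bigr\|_{\psi_1} \;\lesssim\; B_n \ln(1+p) \;\lesssim\; B_n \ln(pn).
\]
Then the general relation $\Ep[|Y|^k] \le k!\,\|Y\|_{\psi_1}^k$ for sub-exponential $Y$, applied with $k=8$, gives $\Ep[\|\bs{X}_i\|_{\infty}^8] \lesssim B_n^8 \ln^8(pn)$, with a universal constant as claimed. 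I do not anticipate any genuine obstacle here: both claims follow mechanically from the sub-exponential definition, and the only care required is in bookkeeping the constants so that (i) the $5$ in the threshold produces the $1/(2n^4)$ probability given $p \ge 3$, and (ii) the standard sub-exponential maximal inequality is invoked with the right logarithmic factor.
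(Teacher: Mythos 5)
Your proof is correct, and it is the standard argument. Note that the paper itself does not prove this lemma; it simply restates Lemma 4.1 of \cite{ICL} as an auxiliary result, so the ``paper's proof'' is by citation. Your route (exponential Markov inequality plus a union bound over the $np$ coordinates for the tail claim; the $\psi_1$-norm maximal inequality $\|\max_{j}|X_{ij}|\|_{\psi_1}\lesssim B_n\ln(1+p)$ followed by $\Ep[|Y|^k]\leq 2\,k!\,\|Y\|_{\psi_1}^k$ for the moment claim) is exactly the argument behind the cited result. One cosmetic point: the moment inequality for a $\psi_1$ variable carries an extra factor of $2$ (from $\Ep[\exp(|Y|/c)]\leq 2$), which you dropped when writing $\Ep[|Y|^k]\leq k!\,\|Y\|_{\psi_1}^k$, but this is harmlessly absorbed into the universal constant $C$. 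The bookkeeping with the constant $5$ closes correctly: $2np\cdot(pn)^{-5}=2(pn)^{-4}\leq 1/(2n^4)$ already holds for $p\geq 2$, so the paper's standing assumption $p\geq 3$ is more than enough.
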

	
	\begin{lemma}\label{lem: all conditions}
		Suppose that Conditions  (KS1) and (KS2) are satisfied and set $\tilde{ \bs{X}}_i=(\tilde X_{i 1},\ldots,\tilde X_{i p})^{\rm T} = \bs{X}_i - \overline{\boldsymbol X}$ for  $i = 1,\ldots,n$.
		Then there exist a universal constant $C_u\in(0,1]$ and constants $C>0$ and $n_0\in\mathbb N$ depending only on $b_1$ and $b_2$ such that for  $n\geq n_0$, if the inequality
		\begin{equation}\label{eq: bn restriction once again}
			B_n^2\ln^5(p n)\leq C_u n
		\end{equation}
		holds, then the following inequalities hold jointly with probability at least $1 - 1/n-3\sqrt{\frac{B_n^2\ln^3(pn)}{n}}$,
		\begin{align}
			&\frac{b_1^2}{2}\leq \frac{1}{n}\sum_{i=1}^n \tilde X_{i j}^2 \quad \text{and}  \quad \frac{1}{n}\sum_{i=1}^n \tilde X_{i j}^4 \leq 2B_n^2 b_2^{2}, \quad \text{for  }j=1,\ldots,p
			\label{eq: upper and lower bounds for second moment}; \\
			&\max_{1\leq j_1,j_2\leq p}\left|\frac{1}{\sqrt n}\sum_{i=1}^n(\tilde X_{i j_1}\tilde X_{i j_2} - \Ep[X_{ij_1}X_{i j_2}])\right| \leq CB_n\sqrt{\ln(p n)} \label{eq: second order deviation};  \\
			&\max_{1\leq j_1,j_2,j_3\leq p}\left|\frac{1}{\sqrt n}\sum_{i=1}^n(\tilde X_{i j_1}\tilde X_{i j_2}\tilde X_{i j_3} - \Ep[X_{ij_1}X_{i j_2}X_{i j_3}])\right| \leq CB_n^2\sqrt{\ln^3(p n)}. \label{eq: third order deviation}
		\end{align}
		
	\end{lemma}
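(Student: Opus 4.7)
Since Lemma~\ref{lem: all conditions} is adapted from Lemmas~4.1 and~4.2 of~\cite{ICL}, the plan is to follow their concentration strategy: establish each of the four displayed events separately on a high-probability set and conclude by a union bound. The essential tools are Bernstein-type tail inequalities for sums of truncated sub-exponential random variables (permitted by Condition~$M_1$), the uniform truncation bound from Lemma~\ref{lem: subgaussian growth}, and a union bound over the $p$, $p^2$, or $p^3$ indices.

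I would first dispose of the centering. A coordinate-wise Bernstein inequality together with a union bound over $p$ indices yields $\max_j|\bar X_{n,j}|\lesssim B_n\sqrt{\ln(pn)/n}$ with probability at least $1-1/(2n)$ (using Condition~$M_1$ and the scaling constraint~\eqref{eq: bn restriction once again}). Expanding $\tilde X_{ij}\tilde X_{ik}$ and $\tilde X_{ij}\tilde X_{ik}\tilde X_{il}$ into cross-terms involving $\bar X_n$, each centering correction is of smaller order than the right-hand sides of~\eqref{eq: second order deviation} and~\eqref{eq: third order deviation}. For the main concentration step I would condition on the event of Lemma~\ref{lem: subgaussian growth} (probability $\geq 1-1/(2n^4)$), on which $|X_{ij}|\leq L:=5B_n\ln(pn)$, and apply Bernstein's inequality to each fixed tuple. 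For pair products $(j,k)$ the variance proxy is $nB_n^2 b_2^2$ from Cauchy--Schwarz and Condition~$M_2$, with uniform bound $L^2$; choosing the deviation $t\sim B_n\sqrt{n\ln(pn)}$ and taking a union bound over $p^2$ pairs yields~\eqref{eq: second order deviation}. The same scheme applied to triple products gives~\eqref{eq: third order deviation}, and the scalar version handles~\eqref{eq: upper and lower bounds for second moment} for $X_{ij}^2$ and $X_{ij}^4$ after Condition~$M_2$ pins the population moments.

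The most delicate step would be the variance proxy in~\eqref{eq: third order deviation}: using Condition~$M_1$ alone yields $\Ep[X_{ij}^2X_{ik}^2X_{il}^2]\lesssim B_n^6$, which would only give the too-weak rate $B_n^3\sqrt{\ln^3(pn)}$. The improvement to the claimed $B_n^2$-scaling must come from leveraging Condition~$M_2$'s much sharper fourth-moment bound $B_n^2b_2^2$ through a careful H\"older or Cauchy--Schwarz split of the sixth moment. Managing this calculation, together with controlling the truncation bias at the $1/n^4$ scale, is the only non-routine piece; otherwise the argument is a textbook application of the CCK-style high-dimensional concentration machinery, and combining the four events via a union bound produces the failure probability $1/n+3\upsilon_n$ quoted in the lemma.
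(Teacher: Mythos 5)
The paper does not actually prove this lemma: it is stated verbatim as a restatement of Lemma~4.2 of~\cite{ICL} (``we restate Lemma~4.1 and Lemma~4.2 in~\cite{ICL} below as auxiliary results'') and is used without further argument. So there is no in-paper proof to compare against, and any judgment of your proposal must be made against what the cited lemma in~\cite{ICL} actually requires.

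As a reconstruction of that argument, your sketch has the right skeleton (truncation via Lemma~\ref{lem: subgaussian growth}, Bernstein, union bound over $p$, $p^2$, $p^3$ tuples, then a bias control for the centering by $\bar{\bs X}_n$) and, importantly, you correctly flag the one genuinely delicate step: the variance proxy for the cubic products. To make that step go through you cannot simply apply H\"older to the full sixth moment (which only yields $B_n^6$, hence $B_n^3$ in the final rate). The correct move is to absorb one factor of the truncation level $L=5B_n\ln(pn)$ into the uniform bound and use Condition~$M_2$'s averaged fourth-moment bound on what is left: after truncation,
\[
\frac1n\sum_i \Ep[X_{ij}^2X_{ik}^2X_{il}^2]\;\le\; L^2\cdot\frac1n\sum_i \Ep[X_{ij}^2X_{ik}^2]\;\le\;L^2\cdot\frac1n\sum_i \tfrac12\bigl(\Ep[X_{ij}^4]+\Ep[X_{ik}^4]\bigr)\;\lesssim\;B_n^4\,b_2^2\ln^2(pn),
\]
so the variance term in Bernstein produces $\sqrt{n}\,B_n^2\ln^{3/2}(pn)$ as required. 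The Bernstein linear term $L^3\ln(pn)\asymp B_n^3\ln^4(pn)$ must then be dominated by $\sqrt n\,B_n^2\ln^{3/2}(pn)$, and this is exactly where the scaling hypothesis~\eqref{eq: bn restriction once again}, $B_n^2\ln^5(pn)\le cn$, is used; your sketch invokes~\eqref{eq: bn restriction once again} only for the centering step, so you should note that it is also the hypothesis that keeps the Bernstein linear term subdominant. The remaining vagueness is in the failure probability: you assert that the union bound ``produces $1/n+3\upsilon_n$,'' but the $3\upsilon_n$ piece does not come from the exponential Bernstein tails (those give $(pn)^{-c}$); in~\cite{ICL} it originates from a Markov/Chebyshev bound on a residual moment quantity in the second-moment step of~\eqref{eq: upper and lower bounds for second moment}, and that is the one place where your sketch would need to be made more precise to recover the stated constant.
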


	Leveraging the proof strategies of Lemma 4.3 in \cite{ICL},  we obtain the following result, which directly yields   Theorem \ref{thm: gaussian approximation main} and Theorem  \ref{cor: rejection probabilities}.
	
	\begin{lemma}\label{coro: gaussian approximation}
		Assume that Conditions  (KS1)  and  (KS2)  are satisfied. Then
		with probability at least $1-1/(2n^4)-1/n-3\sqrt{B_n^2\ln^3(pn)/n}$,
		\begin{equation}\label{eq: multiplier approximation ks metric1}
			\sup_{x\in\mathbb{R}}|\Pr(T_{(k)}\leq x)-\Pr( G_{(k)}\leq x \mid \bs{X}_1,\ldots,\bs{X}_n)|\lesssim  k^2\left(\frac{B_n^2\ln^5(pn)}{n}\right)^{\frac{1}{4}}
		\end{equation}
		
		In addition, the deterministic bound
		\begin{equation}\label{eq: gaussian approximation ks metric2}
			\sup_{x\in\mathbb{R}}|\Pr(T_{(k)}\leq x)-\Pr( Q_{(k)}\leq x)|\lesssim  k^2\left(\frac{B_n^2\ln^5(pn)}{n}\right)^{\frac{1}{4}}.
		\end{equation}
		holds.
	\end{lemma}
	
	\noindent{\bf Proof of Lemma \ref{coro: gaussian approximation}}
	Without loss of generality, we may assume that \eqref{eq: bn restriction once again} holds and that $n$ is sufficiently large so that $n\geq n_0$ for $n_0$ from Lemma \ref{lem: all conditions}, since otherwise the conclusion of the lemma is trivial by taking $C$ large enough. This will ensure the applicability of Lemma \ref{lem: all conditions} when needed. Moreover, by again taking $C$ large enough, we may assume that $1/n^4 + 2/n + 3\sqrt{\frac{B_n^2\ln^3(pn)}{n}} < 1$.
	
	Let $\mathcal{A}_n$ be the event that \eqref{eq: subgaussian bound on x} and \eqref{eq: upper and lower bounds for second moment}--\eqref{eq: third order deviation} hold simultaneously. 
	
	By Lemma 4.1 and 4.2 in \cite{ICL},
	\begin{equation}\label{eq: probability of An}
		\Pr(\mathcal A_n)\geq 1-\frac{1}{2n^4}-\frac{1}{n}-3\sqrt{\frac{B_n^2\ln^3(pn)}{n}}>0.
	\end{equation}
	
	Further, let $\varepsilon_1,\ldots,\varepsilon_n$ be independent standardized Beta$(1/2,3/2)$ random variables, standardized in such a way that they have mean zero and unit variance (cf. Corollary \ref{coro:beta-comparison}) and are independent of $\bs{X}_{1:n} = (\bs{X}_1,\ldots,\bs{X}_n)$. It is also straightforward to verify that  $\Ep[\varepsilon_i^3]=1$ for  $i=1,\ldots,n$.
	
	Define $T_{(k)}^*=\s_k\big( \frac{1}{\sqrt{n}} \sum_{i=1}^n \varepsilon_i(\bs{X}_i - \overline{\boldsymbol X}) \big)$ as the multiplier bootstrap statistic with weights $\varepsilon_1,\ldots,\varepsilon_n$.
	Condition on $\bs{X}_{1:n}$ such that $\mathcal A_n$ holds.
	Then, combining Corollary \ref{coro:beta-comparison} with the definition of $\mathcal A_n$ gives
	\begin{equation}\label{beta-to-gauss}
		\sup_{x\in\mathbb{R}}\left|\Pr\left(T_{(k)}^*  \leq x \mid \bs{X}_{1:n} \right)-\Pr(G_{(k)}\leq x\mid \bs{X}_{1:n})\right|
		\lesssim 
		k^{3/2}\left(\frac{B_n^2\ln^5(pn)}{n}\right)^{\frac{1}{4}}.
	\end{equation}
	On the other hand, by Proposition \ref{coro:g-g-comparison}, we see that
	\begin{equation*}
		\sup_{x\in\mathbb{R}}|\Pr(G_{(k)}\leq x\mid \bs{X}_{1:n})-\Pr(Q_{(k)}\leq x)|
		\lesssim 
		k^{3/2}\left(\frac{B_n^2\ln^5(pn)}{n}\right)^{\frac{1}{4}}.
	\end{equation*}
	
	Next, we apply Proposition \ref{cor: max} to  compare the distribution of $T_{(k)}$ with the conditional distribution of $T_{(k)}^*$. Formally, consider independent copies $\bs{Y}_1,\ldots,\bs{Y}_n$ of $\bs{X}_1,\ldots,\bs{X}_n$ that are also independent of $\bs{X}_{1:n}$, and define $T_{(k)}'$ by $T_{(k)}$ with $\bs{X}_i$'s replaced by $\bs{Y}_i$'s. By construction, $\Pr (T_{(k)} \le x) = \Pr (T_{(k)}' \le x \mid \bs{X}_{1:n})$. Condition on $\bs{X}_{1:n}$ such that $\mathcal A_n$ holds and apply Proposition \ref{cor: max} with $\bs{V}_i = \bs{Y}_i$ and $\bs{Z}_i = \varepsilon_i \tilde{ \bs{X}}_i$ for  $i=1,\ldots,n$.
	Note that $\Ep[\varepsilon_i]=0$ and $\Ep[\varepsilon_i^2] = \Ep[\varepsilon_i^3]=1$ for  $i=1,\ldots,n$, it follows from the definition of $\mathcal A_n$ that Conditions (L1)  and (L2)  as well as inequalities (\ref{eq: bn bounds 1}) and (\ref{eq: bn bounds 2}) of Proposition \ref{cor: max} are satisfied.
	It remains to verify Condition (L3) in Proposition \ref{cor: max}.
	Observe that for    $x\in\mathbb R$ and $t>0$, there exit   constants $C_1$ and $C_2$ such that
	\begin{equation*}
		\label{beta-anti}
		\begin{split}
			&\quad  \quad\Pr\left(T_{(k)}^*  \leq x+ t \mid \bs{X}_{1:n}\right)\\
			&\quad   \leq \Pr\left( G_{(k)} \leq x+t \mid \bs{X}_{1:n}\right) + C_1k^{3/2}\left(\frac{B_n^2\ln^5(pn)}{n}\right)^{\frac{1}{4}}  \\
			&\quad  \leq \Pr\left( G_{(k)} \leq x\mid \bs{X}_{1:n}\right) + C_2k t\sqrt{1\vee\ln p} + C_1k^{3/2}\left(\frac{B_n^2\ln^5(pn)}{n}\right)^{\frac{1}{4}}\\
			&\quad  \leq\Pr\left(T_{(k)}^* \leq x\mid \bs{X}_{1:n}\right)
			+C_2k t\sqrt{1\vee\ln p} + 2C_1k^{3/2}\left(\frac{B_n^2\ln^5(pn)}{n}\right)^{\frac{1}{4}},
		\end{split}
	\end{equation*}
	where the first and the third inequalities follow from \eqref{beta-to-gauss}, the second from Corollary \ref{cor: anticoncentration} and \eqref{eq: upper and lower bounds for second moment}. 
	
	Since $\sqrt{1\vee\ln p}\lesssim\sqrt{\ln p\vee\ln(p/t)}$, Condition (L3) holds with $\delta\asymp k^{1/2}\{B_n^2\ln^5(pn)/n\}^{1/4}$.
	Consequently,  by applying Proposition \ref{cor: max}, we conclude that
	\begin{align*}
		\sup_{x\in\mathbb{R}}|\Pr(T_{(k)}\leq x)-\Pr(T^*_{(k)}\leq x\mid \bs{X}_{1:n})| &= \sup_{x\in\mathbb{R}}|\Pr(T_{(k)}'\leq x \mid \bs{X}_{1:n})-\Pr(T^*_{(k)}\leq x\mid \bs{X}_{1:n})|\\
		&\lesssim k^2\left(\frac{B_n^2\ln^5(pn)}{n}\right)^{\frac{1}{4}}
	\end{align*}
	The desired result follows from these bounds with the triangle inequality, namely,
	\begin{equation*}
		\begin{array}{ccl}
			\left|\Pr(T_{(k)}\leq x)-\Pr\left(  G_{(k)} \leq x\mid \bs{X}_{1:n}\right)\right|
			&\le&|\Pr(T_{(k)}\leq x)-\Pr(T^*_{(k)}\leq x\mid \bs{X}_{1:n})|\\
			&&+\left| \Pr\left( T^*_{(k)}\leq x\mid \bs{X}_{1:n}\right)-\Pr\left(  G_{(k)} \leq x\mid \bs{X}_{1:n}\right)\right| \\
		\end{array}
	\end{equation*}
	and
	\begin{equation*}
		\begin{array}{ccl}
			|\Pr(T_{(k)}\leq x)-\Pr(Q_{(k)}\leq x)|
			&\le&|\Pr(T_{(k)}\leq x)-\Pr(T^*_{(k)}\leq x\mid \bs{X}_{1:n})|\\
			&&+\left| \Pr\left( T^*_{(k)}\leq x\mid \bs{X}_{1:n}\right)-\Pr\left(  G_{(k)} \leq x\mid \bs{X}_{1:n}\right)\right| \\
			&&+\left|\Pr\left(  G_{(k)} \leq x\mid \bs{X}_{1:n}\right)- \Pr(Q_{(k)}\leq x)\right|.
		\end{array}
	\end{equation*}

	The bound \eqref{eq: multiplier approximation ks metric1} holds on $\mathcal A_n$ and hence with probability at least $1-1/(2n^4)-1/n-3\sqrt{B_n^2\ln^3(pn)/n}$. In contrast, the left-hand side of \eqref{eq: gaussian approximation ks metric2} is non-stochastic. Since the preceding argument establishes \eqref{eq: gaussian approximation ks metric2} on the nonempty event $\mathcal A_n$, the bound \eqref{eq: gaussian approximation ks metric2} holds deterministically.
\rulex
	
	\vspace{2ex}

	\noindent{\bf Proof of Theorem \ref{thm: gaussian approximation main}}

	Letting $\epsilon\downarrow0$ in Corollary \ref{cor: anticoncentration} shows that $\Pr(Q_{(k)}=x)=0$ for every $x\in\mathbb R$. Therefore, $Q_{(k)}$ has a continuous distribution, and hence $\Pr(Q_{(k)}>c_{1-\alpha}^Q)=\alpha$. Applying \eqref{eq: gaussian approximation ks metric2} at the deterministic value $x=c_{1-\alpha}^Q$ yields the claimed result.
\rulex

	\noindent{\bf Proof of Theorem \ref{cor: rejection probabilities}}

	Set
	$$
	r_n=\left(\frac{B_n^2\ln^5(pn)}{n}\right)^{\frac14}\quad\text{and}\quad \eta_n=C_0k^2r_n,
	$$
	where $C_0>0$ is sufficiently large.
	As in the proof of Lemma \ref{coro: gaussian approximation}, it suffices to consider \eqref{eq: bn restriction once again}, since otherwise the conclusion follows by enlarging the implicit constant.
	By the proof of Lemma \ref{coro: gaussian approximation}, on $\mathcal A_n$,
	\begin{equation}\label{eq: gaussian quantile comparison}
		\sup_{x\in\mathbb R}\left|\Pr(G_{(k)}\leq x\mid\bs X_{1:n})-\Pr(Q_{(k)}\leq x)\right|\leq\eta_n,
	\end{equation}
	and the left-hand side of \eqref{eq: gaussian approximation ks metric2} is bounded by $\eta_n$.
	Let $u_-=(1-\alpha-\eta_n)\vee0$ and $u_+=(1-\alpha+\eta_n)\wedge1$, with the conventions $c_0^Q=-\infty$ and $c_1^Q=\infty$.
	By the definition of quantiles, \eqref{eq: gaussian quantile comparison} implies on $\mathcal A_n$ that
	\begin{equation}\label{eq: gaussian quantile sandwich}
		c_{u_-}^Q\leq c_{1-\alpha}^G\leq c_{u_+}^Q.
	\end{equation}

	Letting $\epsilon\downarrow0$ in Corollary \ref{cor: anticoncentration} shows that $\Pr(Q_{(k)}=x)=0$ for every $x\in\mathbb R$. Therefore, $Q_{(k)}$ has a continuous distribution.
	Consequently,
	$$
	\Pr(Q_{(k)}>c_{u_-}^Q)=1-u_-\leq\alpha+\eta_n
	\quad\text{and}\quad
	\Pr(Q_{(k)}>c_{u_+}^Q)=1-u_+\geq\alpha-\eta_n.
	$$
	Using the left inequality in \eqref{eq: gaussian quantile sandwich}, we obtain
	\begin{align*}
		\Pr(T_{(k)}>c_{1-\alpha}^G)
		&\leq\Pr(\{T_{(k)}>c_{1-\alpha}^G\}\cap\mathcal A_n)+\Pr(\mathcal A_n^c)\\
		&\leq\Pr(T_{(k)}>c_{u_-}^Q)+\Pr(\mathcal A_n^c)\\
		&\leq\alpha+2\eta_n+\Pr(\mathcal A_n^c).
	\end{align*}
	Similarly, the right inequality in \eqref{eq: gaussian quantile sandwich} gives
	\begin{align*}
		\Pr(T_{(k)}>c_{1-\alpha}^G)
		&\geq\Pr(\{T_{(k)}>c_{u_+}^Q\}\cap\mathcal A_n)\\
		&\geq\Pr(T_{(k)}>c_{u_+}^Q)-\Pr(\mathcal A_n^c)\\
		&\geq\alpha-2\eta_n-\Pr(\mathcal A_n^c).
	\end{align*}
	
	Finally, \eqref{eq: probability of An} and \eqref{eq: bn restriction once again} yield
	$$
	\Pr(\mathcal A_n^c)\leq\frac{1}{2n^4}+\frac{1}{n}+3\sqrt{\frac{B_n^2\ln^3(pn)}{n}}\lesssim r_n\leq\eta_n.
	$$
	Combining the preceding bounds proves the theorem.
\rulex

	\vspace{2ex}

	\subsection{Proofs of Theorem \ref{thm5.1}  and Theorem \ref{thm5.2}}
	We first give some notations. A set $ \mathcal{B}\in \mathbb{R}^p $ is a $ d $-generated polytope if it is a convex polytope with at most $ d $ facets, that is,
	$\mathcal{B} = \bigcap_{\boldsymbol v \in \mathcal{V}(\mathcal{B})} \{\boldsymbol u \in \mathbb{R}^p \mid \boldsymbol u^{\rm T} \boldsymbol v \leq S_{\mathcal{B}}(\boldsymbol v)\},	$
	in which $\mathcal{V}(\mathcal{B})$ consists of $ d $ unit vectors that are outward normal to the facets of $ \mathcal{B} $.
	For $ \epsilon > 0 $ and a $ d $-generated convex set $ \mathcal{B}^d $, define
	$\mathcal{B}^{d,\epsilon} = \bigcap_{\boldsymbol v \in \mathcal{V}(\mathcal{B}^d)} \{\boldsymbol u \in \mathbb{R}^p \mid \boldsymbol u^{\rm T} \boldsymbol v \leq S_{\mathcal{B}^d}(\boldsymbol v) + \epsilon\}.$
	It is also a $ d $-generated convex set. And recall that $\bs{X}_{1:n} = (\bs{X}_1,\ldots,\bs{X}_n)$.
	
	Without loss of generality, we may assume that $B_n^2 \leq n$ since otherwise the assertions are trivial. Let $U = \{ (x_1,\ldots,x_p) \in \mathbb{R}^p \mid \max_{1 \leq j \leq p} |x_j| \leq n\ln(2pn^{5/4})\}$. By union bound, Markov's inequality, and condition (KS1), $$
	\mathbb{P}\big( \max_{i,j} |X_{ij}| > n^{1/2}\ln(2pn^{5/4})\big)
	\leq pn \max_{i,j}\mathbb{P}\big(|X_{ij}| > n^{1/2}\ln(2pn^{5/4})\big)
	\leq pn \frac{\mathbb{E}[\exp(|X_{ij}|/B_n)]}{2pn^{5/4}} \leq \frac{1}{n^{1/4}},
	$$
	where $\max_{i,j}$ stands for $\max_{1 \leq i \leq n} \max_{1 \leq j \leq p}$ and $\sum_{i,j}$ stands for $\sum_{1 \leq i \leq n} \sum_{1 \leq j \leq p}$. Hence
	$$
	\mathbb{P}(\boldsymbol T \notin U ) \leq \mathbb{P}\big( \max_{i,j} |X_{ij}| > n^{1/2}\ln(2pn^{5/4})\big) \leq \frac{1}{n^{1/4}}.
	$$
	And under Condition (KS1) and Condition (KS2), applying Theorem 2.1 and Theorem 2.2 in \cite{ICL}, we have
	$$
	\mathbb{P}(\boldsymbol Q \notin U  )  \leq C_3 \big( \frac{B_n^2 \ln^5(pn)}{n} \big)^{\frac{1}{4}}
	$$
	and
	$$
	\mathbb{P}(\boldsymbol G \notin U \mid \bs{X}_{1:n} )  \leq C_3 \big( \frac{B_n^2 \ln^5(pn)}{n} \big)^{\frac{1}{4}},
	$$
	where $ C_3 $ is a constant depending only on $ b_1 $ and $ b_2 $.
	Hence, we get
	$$\begin{array}{rcl}
		&&\big| \mathbb{P}\big(T_\psi \leq t\big) - \mathbb{P}\big(T_\psi^{\boldsymbol Q} \leq t  \big)\big|\\
		&&\leq
		\big| \mathbb{P}\big(T_\psi \leq t ~ \text{and} ~ \boldsymbol T \in U \big) - \mathbb{P}\big(T_\psi^{\boldsymbol Q} \leq t ~ \text{and} ~ \boldsymbol Q \in U   \big) \big|
		+ C_3 \big( \frac{B_n^2 \ln^5(pn)}{n} \big)^{\frac{1}{4}}
	\end{array} $$
	and
	$$\begin{array}{rcl}
		&&\big| \mathbb{P}\big(T_\psi^{\boldsymbol Q} \leq t \big) - \mathbb{P}\big(T_\psi^{\boldsymbol G} \leq t \mid \bs{X}_{1:n}\big)\big| \\
		&&\leq
		\big| \mathbb{P}\big(T_\psi^{\boldsymbol Q} \leq t ~ \text{and} ~ \boldsymbol Q \in U  \big) - \mathbb{P}\big(T_\psi^{\boldsymbol G} \leq t ~ \text{and} ~ \boldsymbol G \in U \mid \bs{X}_{1:n} \big) \big|
		+ C_3 \big( \frac{B_n^2 \ln^5(pn)}{n} \big)^{\frac{1}{4}}.
	\end{array}$$
	Under Condition (KS4) and Condition (KS5),  we notice that $\big\{\boldsymbol x\in \mathbb{R}^p \mid \psi\big(x_{(1)},\ldots,x_{(k)}\big) \leq t\big\}=\bigcap_{\{j_1, \ldots, j_k\}\subset\{1, \ldots, p\}}\{\boldsymbol x\in \mathbb{R}^p \mid \psi\big(|x_{j_1}|, \ldots ,|x_{j_k}|\big)\leq t\}$ for $\boldsymbol x = (x_1, \ldots, x_p) \in \mathbb{R}^p$. We denote $\big\{\boldsymbol x\in \mathbb{R}^p \mid \psi\big(|x_{j_1}|, \ldots ,|x_{j_k}|\big)\leq t\big\}\cap U$ by $A_q$, where $1\leq q \leq m = \frac{p!}{(p-k)!} \leq p^k$. It suffices to bound $$\big| \mathbb{P}\big(\boldsymbol T \in \bigcap_{1\leq q \leq m}A_q\big) - \mathbb{P}\big(\boldsymbol Q \in \bigcap_{1\leq q \leq m}A_q \big) \big|$$ and
	$$\big| \mathbb{P}\big(\boldsymbol Q \in \bigcap_{1\leq q \leq m}A_q \big) - \mathbb{P}\big(\boldsymbol G \in \bigcap_{1\leq q \leq m}A_q \mid \bs{X}_{1:n}\big) \big|.$$ For brevity, we denote $n\ln(2pn^{5/4})$ by $R$. We can notice that $A_q$ are convex and compact for $q=1,\ldots,m$ by Condition (KS6).
	We can also notice that $A_q = -A_q$ for $q=1,\ldots,m$. By applying Corollary 1.2 in \cite{Barvinok2013}, we can consruct a polytope $P_q \subset \mathbb{R}^p$ with at most $(\gamma/\sqrt{\epsilon}\ln(1/\epsilon))^k$ vertices so that
	$P_q \subset A_q \subset (1+\epsilon)P_q$ for $q=1,\ldots,m$. For the sparsity of $A_q$, all vectors in $\mathcal{V}(P_q)$ can be selected to have at most k non-zero elements. Since we need at most k vertices to form a facet of $P_q$, $P_q$ has at most $(\gamma/\sqrt{\epsilon}\ln(1/\epsilon))^{k^2}$ facets for $q=1,\ldots,m$. We can deduce that $\bigcap_{1\leq q \leq m}P_q \subset \bigcap_{1\leq q \leq m}A_q \subset (1+\epsilon)\bigcap_{1\leq q \leq m}P_q$. Let $\mathcal{B}^d = \bigcap_{1\leq q \leq m}P_q$. Then $\mathcal{B}^d$ has $d \leq p^k(\gamma/\sqrt{\epsilon}\ln(1/\epsilon))^{k^2}$ facets, and all vectors in $\mathcal{V}(\mathcal{B}^d)$ have at most k non-zero elements. By Cauchy-Schwarz inequality, we have $$\mathcal{B}^d \subset \bigcap_{1\leq j \leq m}A_j \subset (1+\epsilon)\mathcal{B}^d \subset B^{d,k^{1/2}R\epsilon}.$$
	Following the proof of Proposition 3.1 in \cite{CCK17}, we let $$\begin{array}{rcl}\rho_1' = \big|\mathbb{P}(\boldsymbol T \in \mathcal{B}^d) - \mathbb{P}(\boldsymbol Q \in \mathcal{B}^d )\big| \vee \big|\mathbb{P}(\boldsymbol T \in B^{d,k^{1/2}R\epsilon}) - \mathbb{P}(\boldsymbol Q \in B^{d,k^{1/2}R\epsilon} )\big|,
	\end{array}$$
	and
	$$\begin{array}{rcl}\rho_2' = \big|\mathbb{P}(\boldsymbol G \in \mathcal{B}^d \mid \bs{X}_{1:n}) - \mathbb{P}(\boldsymbol Q \in \mathcal{B}^d )\big| \vee \big|\mathbb{P}(\boldsymbol G \in B^{d,k^{1/2}R\epsilon} \mid \bs{X}_{1:n}) - \mathbb{P}(\boldsymbol Q \in B^{d,k^{1/2}R\epsilon} )\big|.
	\end{array}$$
	We have $\mathbb{P}(\boldsymbol T \in \bigcap_{1\leq q \leq m}A_q)\leq \mathbb{P}(\boldsymbol T \in  B^{d,k^{1/2}R\epsilon})\leq \mathbb{P}(\boldsymbol Q \in  B^{d,k^{1/2}R\epsilon} )+\rho_1' $. Here notice that $(\boldsymbol v^{\rm T} \boldsymbol Q)_{\boldsymbol v\in\mathcal{V}(\mathcal{B}^d)}$ is a d-dimensional Gaussian random vector. Hence by Condition (KS3) and Lemma A.1 in \cite{CCK17}, we have
	$$
	\begin{array}{rcl}
		\mathbb{P}(\boldsymbol Q \in B^{d,k^{1/2}R\epsilon} ) & = & \mathbb{P}\{\boldsymbol v^{\rm T} \boldsymbol Q \leq S_{\mathcal{B}^d}(\boldsymbol v) + \epsilon \text{ for } \boldsymbol v \in \mathcal{V}(\mathcal{B}^d) \} \\
		& \leq & \mathbb{P}\{\boldsymbol v^{\rm T} \boldsymbol Q \leq S_{\mathcal{B}^d}(\boldsymbol v) \text{ for } \boldsymbol v \in \mathcal{V}(\mathcal{B}^d) \} + C_4k^{1/2}R\epsilon \ln^{1/2} d \\
		& = & \mathrm{P}(\boldsymbol Q \in \mathcal{B}^d ) + C_4k^{1/2}R\epsilon \ln^{1/2}d,
	\end{array}
	$$
	where $C_4$ is a constant depending on $b_1$.
	Therefore, $$\mathbb{P}(\boldsymbol T \in \bigcap_{1\leq q \leq m}A_q)\leq \mathbb{P}(\boldsymbol Q \in \bigcap_{1\leq q \leq m}A_q )+C_4k^{1/2}R\epsilon \ln^{1/2}d+\rho_1'. $$
	Likewise we have $\mathbb{P}(\boldsymbol T \in \bigcap_{1\leq q \leq m}A_q)\geq \mathbb{P}(\boldsymbol Q \in \bigcap_{1\leq q \leq m}A_q )-C_4k^{1/2}R\epsilon \ln^{1/2}d-\rho_1' $, by which we have $$\big|\mathbb{P}(\boldsymbol T \in \bigcap_{1\leq q \leq m}A_q) - \mathbb{P}(\boldsymbol Q \in \bigcap_{1\leq q \leq m}A_q) )\big|\leq C_4k^{1/2}R\epsilon \ln^{1/2}d+\rho_1'.$$
	Likewise, we get
	$$\big|\mathbb{P}(\boldsymbol G \in \bigcap_{1\leq q \leq m}A_q \mid \bs{X}_{1:n}) - \mathbb{P}(\boldsymbol Q \in \bigcap_{1\leq q \leq m}A_q )\big|\leq C_4k^{1/2}R\epsilon \ln^{1/2}d+\rho_2'.$$
	To bound $\big|\mathbb{P}(\boldsymbol T \in \mathcal{B}^d ) - \mathbb{P}(\boldsymbol Q \in \mathcal{B}^d )\big|$, we write $\mathcal{B}^d$ as $\bigcap_{\boldsymbol v \in \mathcal{V}(\mathcal{B}^d)} \{\boldsymbol x \in \mathbb{R}^p \mid \boldsymbol x^{\rm T} \boldsymbol v \leq S_{\mathcal{B}^d}(\boldsymbol v)\}$. Consider $
	\widetilde{\boldsymbol X}_{i}
	= \big( \widetilde{X}_{i1}, \ldots, \widetilde{X}_{id} \big)^{\rm T}
	= \big( \boldsymbol v^{\rm T}  \boldsymbol X_i \big)_{\boldsymbol v \in \mathcal{V}(\mathcal{B}^d)}
	$, $i = 1,\ldots, n$, where $d \leq p^k(\gamma/\sqrt{\epsilon}\ln(1/\epsilon))^{k^2}$. By replacing $\boldsymbol X_i$ by $\widetilde{\boldsymbol X}_{i}$, we can similarly define $\widetilde{\boldsymbol T}$, $\widetilde{\boldsymbol Q}$, and $\widetilde{\boldsymbol G}$.
	Then, we can find that $\mathbb{P}(\boldsymbol T \in \mathcal{B}^d)=\mathbb{P}(\widetilde{\boldsymbol T} \in B' )$, where $B' $ is in the form of $\big\{(x_1, \ldots, x_d)\in \mathbb{R}^d \mid x_s \leq a_s \text{ for } s=1,\ldots, d\big\}$ for some constants $a_s \in \mathbb{R}$. Let $N(\boldsymbol v)$ be the set consisting of positions of non-zero elements of $\boldsymbol v$. From condition (KS1), we have $\|X_{ij}\|_e \leq B_n$. Then for $\boldsymbol v \in \mathcal{V}(\mathcal{B}^d)$, $\|\boldsymbol v^{\rm T} \boldsymbol X_i\|_e \leq \sum_{j \in N(v)}\|X_{ij}\|_e \leq kB_n \leq k^2B_n$. Using Hölder' s inequality, we get $$\sum_{l=1}^{k} |x_l|
	= \sum_{l=1}^{k} |x_l| \cdot 1
	\leq \big( \sum_{l=1}^{k} |x_l|^4 \big)^{1/4}
	\big( \sum_{l=1}^{k} 1^{4/3} \big)^{3/4}
	= k^{3/4} \big( \sum_{l=1}^{k} |x_l|^4 \big)^{1/4}$$
	for $(x_1, \dots, x_k) \in \mathbb{R}^k$. From Conditions (KS2)-(KS3) and applying the inequality above, for
	$\boldsymbol v \in \mathcal{V}(\mathcal{B}^d)$, we have
	$$n^{-1} \sum_{i=1}^n\mathbb{E}[|\boldsymbol v^{\rm T} \boldsymbol X_i|^{4}] \leq n^{-1} \sum_{i=1}^n\mathbb{E}[ ( \sum_{j \in N(v)} |X_{ij}|)^{4}] \leq k^{3} n^{-1} \sum_{i=1}^n \mathbb{E}[\sum_{j \in N(v)} |X_{ij}|^{4}] \leq k^{4} B_n^{2}b_2^2.$$
	Then, applying Theorem 2.1 and Theorem 2.2 in \cite{ICL} to $\widetilde{\boldsymbol T}$, $\widetilde{\boldsymbol Q}$, and $\widetilde{\boldsymbol G}$, we get $$\big|\mathbb{P}(\boldsymbol T \in \mathcal{B}^d) - \mathbb{P}(\boldsymbol Q \in \mathcal{B}^d  )\big|\leq C_5k \big( \frac{B_n^2 \ln^5(np^k(\gamma/\sqrt{\epsilon}\ln(1/\epsilon))^{k^2})}{n} \big)^{\frac{1}{4}}, $$
	where $ C_5 $ is a constant depending only on $ b_1 $ and $ b_2 $. Likewise, we can bound $|\mathbb{P}(\boldsymbol T \in B^{d,k^{1/2}R\epsilon}) - \mathbb{P}(\boldsymbol Q \in B^{d,k^{1/2}R\epsilon} )|$, $|\mathbb{P}(\boldsymbol G \in \mathcal{B}^d \mid \bs{X}_{1:n}) - \mathbb{P}(\boldsymbol Q \in \mathcal{B}^d )|$ and $|\mathbb{P}(\boldsymbol G \in B^{d,k^{1/2}R\epsilon} \mid \bs{X}_{1:n}) - \mathbb{P}(\boldsymbol Q \in B^{d,k^{1/2}R\epsilon} )|$, so that we bound $\rho_1'$ and $\rho_2'$. As $n$ is sufficiently large, we can let $1/\epsilon=k^{1/2}n^{5/4}\ln(2pn^{5/4})$. Using triangle inequality, we give the bound of ${\rho}_1$ and ${\rho}_2$.
\rulex
	\subsection{Proof of Corollary  \ref{cor5.1}}
	(i) We consider $\|(a_1,\ldots,a_k)\|_2=1$ without loss of generality. Let $V$ be a subset of $\mathbb{R}^p$ in which for $(v_1, \ldots, v_p)\in V$, there exists $1 \leq j_1, \ldots, j_k \leq p$, such that $(v_{j_1}, \ldots, v_{j_k}) = (a_1, \ldots,a_k)$, and for $j \notin \{j_1, \ldots, j_k\}, ~ v_j = 0$. Consider $\widetilde{\boldsymbol X}_{i}
	= \big( \widetilde{X}_{i1}, \ldots, \widetilde{X}_{im} \big)^{\rm T}
	= \big( \boldsymbol v^{\rm T}  \boldsymbol X_i \big)_{\boldsymbol v \in V}
	$, for $i = 1, \ldots, n$, where $m = \frac{p!}{(p-k)!} \leq p^k$.
	Using rearrangement Inequality, we can deduce that $\psi\big(T_{(1)},\ldots,T_{(p)}\big) = \max_{1 \leq q \leq m} \frac{1}{\sqrt{n}} \sum_{i=1}^n\widetilde{X}_{iq} .$
	As is proved in the proofs of Theorem \ref{thm5.1} and Theorem \ref{thm5.2}, from Conditions (KS1)-(KS3), we get $\|\boldsymbol v^{\rm T} \boldsymbol X_i\|_e \leq k^2B_n$ and $n^{-1} \sum_{i=1}^n\mathbb{E}[|\boldsymbol v^{\rm T}  \boldsymbol X_i|^{4}] \leq k^{4} B_n^{2}b_2^2$ for $\boldsymbol v \in V$. Applying Theorem 2.1 and Theorem 2.2 in \cite{ICL}, we have
	$$\sup\limits_{t\in \mathbb{R}}\big| \mathbb{P}\big(\psi(T_{(1)},\ldots,T_{(p)}) \leq t\big) - \mathbb{P}\big(\psi(G_{(1)},\ldots,G_{(p)}) \leq t \mid \bs{X}_{1:n}\big) \big|  \lesssim k \big( \frac{B_n^2 \ln^5(p^kn)}{n} \big)^{\frac{1}{4}}, $$
	and
	$$\sup\limits_{t\in \mathbb{R}}\big| \mathbb{P}\big(\psi(T_{(1)},\ldots,T_{(p)}) \leq t\big) - \mathbb{P}\big(\psi(Q_{(1)},\ldots,Q_{(p)}) \leq t \big) \big|  \lesssim k \big( \frac{B_n^2 \ln^5(p^kn)}{n} \big)^{\frac{1}{4}}, $$
	Then, we consider $T_\psi= \psi\big(Y_{[1]},\ldots,Y_{[k]}\big)$. We only need to replace the p-dimensional vector $\boldsymbol X_i$ with the 2p-dimensional vector whose first p coordinates are equal to $\boldsymbol X_i$ and the last p coordinates are equal to $-\boldsymbol X_i$ in the scenario above. Hence, we get $$ {\rho}_1 \vee {\rho}_2 \lesssim k \big( \frac{B_n^2 \ln^5(2^kp^kn)}{n} \big)^{\frac{1}{4}}, $$ for $\psi\big(u_1, \ldots,u_k\big)=\sum_{l=1}^{k}a_lu_l$.
	(ii) We notice that $\sum_{l=1}^{k}\exp(u_l) \geq \sum_{l=1}^{k}\exp(v_l) $ when $u_l\geq v_l\geq0$ for $l = 1,\ldots,k$. We also notice that $\sum_{l=1}^{k}\exp(u_l) = \sum_{l=1}^{k}\exp(u_{(l)})$ when $u_l\geq0$ for $l = 1,\ldots,k$. Besides, we notice that $\big\{(u_1, \ldots, u_k)\in \mathbb{R}^k \mid \sum_{l=1}^{k}\exp(u_l)\leq t\big\}$ is closed and convex for $t \in \mathbb{R}$. Applying Theorem \ref{thm5.1}  and Theorem \ref{thm5.2}, we complete the proof.
	(iii) We notice that $\sum_{l=1}^{k}u_l^2 \geq \sum_{l=1}^{k}v_l^2 $ when $u_l\geq v_l\geq0$ for $l = 1,\ldots,k$. We also notice that $\sum_{l=1}^{k}u_l^2 = \sum_{l=1}^{k}u_{(l)}^2$ when $u_l\geq0$ for $l = 1,\ldots,k$. Besides, we notice that $\big\{(u_1, \ldots, u_k)\in \mathbb{R}^k \mid \sum_{l=1}^{k}u_l^2\leq t\big\}$ is closed and convex for $t \in \mathbb{R}$. Applying Theorem \ref{thm5.1}  and Theorem \ref{thm5.2}, we complete the proof.
\rulex
	\subsection{Proof of Theorem \ref{thm6.1}}
	We consider $$T_F^{\bs G}=\min\big\{1- F_{l_1}\big( T_{\psi_{l_1}}^{\bs G}\big), \ldots ,1- F_{l_s}\big(T_{\psi_{l_s}}^{\bs G}\big)\big\}, $$
	where  $ F_l(t)$  is the cumulative distribution function  of $T_{\psi_l}^{\bs G}\mid \bs{X}_{1:n}$
	for $l=l_1, \ldots, l_s$ with $1\leq l_1 < \ldots < l_s=k$. We first prove that $$ \sup\limits_{t\in \mathbb{R}}\big| \mathbb{P}\big(T_F < t\big) - \mathbb{P}\big(T_F^{\boldsymbol{G}}  < t\mid \bs{X}_{1:n} \big) \big|\lesssim k^{\frac{7}{2}} \Big( \frac{B_n^2 \ln^5(\gamma k^{\frac{1}{4}}pn^{\frac{13}{8}}\ln^{\frac{1}{2}}(2pn^{\frac{5}{4}})\ln[k^{\frac{1}{2}}n^{\frac{5}{4}}\ln(2pn^{\frac{5}{4}})])}{n} \Big)^{\frac{1}{4}}$$ when Conditions (KS1)-(KS3) are satisfied and $n$ is sufficiently large. Then we prove that $$\lim_{B\rightarrow \infty }\big|\frac{1}{B}\sum_{r=1}^B\mathds{1}{\{t_r<t\}}-\mathbb{P}(T_F^{\bs G}< t\mid \bs{X}_{1:n} )\big|=0 $$ almost surely for $t\in \mathbb{R}$. Combining these two results and applying triangle inequality completes the proof of Theorem \ref{thm6.1}.
	We still assume that $B_n^2 \leq n$ and use the same notations $U$ and $R$ as that in the proofs of Theorem \ref{thm5.1} and Theorem \ref{thm5.2}. In the first step, $$\sup\limits_{t\in \mathbb{R}}\big| \mathbb{P}\big(T_F < t\big) - \mathbb{P}\big(T_F^{\boldsymbol{G}}  < t \mid \bs{X}_{1:n} \big) \big|=\sup\limits_{t\in \mathbb{R}}\big| \mathbb{P}\big(T_F \geq t\big) - \mathbb{P}\big(T_F^{\boldsymbol{G}} \geq t \mid \bs{X}_{1:n} \big) \big|
	.$$ For $l=l_1, \ldots, l_s$, we notice that $\big\{(u_1,\ldots,u_k)^{\rm T} \in \mathbb{R}^p \mid 1-F_l\big(\psi_l(u_{[1]},\ldots,u_{[l]})\big) \geq t\big\}$ can be rewritten as  $\bigcap_{\{j_1, \ldots, j_l\}\subset\{1, \ldots, p\}}\big\{(u_1,\ldots,u_k)^{\rm T} \in \mathbb{R}^p \mid \psi_l(|u_{j_1}|, \ldots ,|u_{j_l}|)\leq F_l^{-1}(1-t)\big\}$ where $F_l^{-1}(1-t)=\inf_{x\in \mathbb{R}}\{F_l(x)\geq 1-t \}$. And we denote $\big\{\boldsymbol u\in \mathbb{R}^p \mid \psi_l(|x_{j_1}|, \ldots ,|x_{j_l}|)\leq F_l^{-1}(1-t)\big\}\cap U$ by $A_{lq}$, where $1\leq q \leq m_l = \frac{p!}{(p-l)!} \leq p^l$. Following the proofs of Theorem \ref{thm5.1} and Theorem \ref{thm5.2}, it suffices to bound $$\bigg| \mathbb{P}\big(\boldsymbol T \in\bigcap_{l=l_1,\ldots,l_s}\bigcap_{1\leq q \leq m_l}A_{lq}\big) - \mathbb{P}\big(\boldsymbol G \in \bigcap_{l=l_1, \ldots, l_s}\bigcap_{1\leq q \leq m_l}A_{lq}\mid \bs{X}_{1:n}\big) \bigg|.$$
	We can notice that $A_{lq}$ are convex and compact and $A_{lq}=-A_{lq}$.
	For $l=l_1, \ldots, l_s$ and $1\leq q\leq m_l$,  there exists $1\leq \tilde{q} \leq m_k$ such that the main dimension of $A_{k\tilde{q}}$ contains that of $A_{lq}$. For example, the main dimension of $\{\boldsymbol u \in \mathbb{R}^p \mid \psi_l(|u_{j_1}|, \ldots ,|u_{j_l}|)\leq F_i^{-1}(t)\}\cap U$ is $\{j_1, \ldots, j_l\}$. Therefore, we can find all the $A_{lq}$ with $l=l_1, \ldots, l_s$ and $1\leq q\leq m_l$ whose main dimension is contained in that of $A_{k\tilde{q}}$ with $1\leq \tilde{q} \leq m_k$. Let their intersection be $\tilde{A}_{k\tilde{q}}$ which is also convex and compact and $\tilde{A}_{k\tilde{q}}=-\tilde{A}_{k\tilde{q}}$. Then, we have $$\bigcap_{l=l_1, \ldots, l_s}\bigcap_{1\leq q \leq m_l}A_{lq}=\bigcap_{1\leq q \leq m_k}\tilde{A}_{kq}. $$
	Applying Theorem \ref{thm5.2}, we have $$\bigg| \mathbb{P}\big(\boldsymbol T \in\bigcap_{1\leq q \leq m_k}\tilde{A}_{kq}\big) - \mathbb{P}\big(\boldsymbol G \in \bigcap_{1\leq q \leq m_k}\tilde{A}_{kq}\mid \bs{X}_{1:n}\big) \bigg|\lesssim k^{\frac{7}{2}} \big( \frac{B_n^2 \ln^5(\gamma k^{\frac{1}{4}}pn^{\frac{13}{8}}\ln^{\frac{1}{2}}(2pn^{\frac{5}{4}})\ln[k^{\frac{1}{2}}n^{\frac{5}{4}}\ln(2pn^{\frac{5}{4}})])}{n} \big)^{1/4}, $$
	which completes the proof of the first step.
	In the next step, applying Glivenco-Cantelli Theorem, we have $$\lim_{B\rightarrow \infty}\frac{1}{B-1}
	\sum_{\substack{h=1 \\ h\neq r}}^B\mathds{1}\big\{T_{\psi_{l}}^{\bs G^{(h)}} > T_{\psi_{l}}^{\bs G^{(r)}}\big\}=F_l\big(T_{\psi_{l}}^{\bs G^{(r)}}\big)$$ almost surely
	for $r= 1,\ldots,B$ and $l= l_1,\ldots, l_s$. Therefore, we get $$\lim_{B\rightarrow \infty}t_r=\min\big\{1-F_{l_1}\big(T_{\psi_{l_1}}^{\bs G^{(r)}}\big), \ldots, 1-F_{l_s}\big(T_{\psi_{l_s}}^{\bs G^{(r)}}\big)\big\}$$ alomst surely for $r= 1,\ldots,B$.
	We observe that $$\bigg|\frac{1}{B-1}
	\sum_{\substack{h=1 \\ h\neq r_1}}^B\mathds{1}\big\{T_{\psi_{l}}^{\bs G^{(h)}} > t\big\}-\frac{1}{B-1}
	\sum_{\substack{h=1 \\ h\neq r_2}}^B\mathds{1}\big\{T_{\psi_{l}}^{\bs G^{(h)}} > t\big\} \bigg| \leq \frac{2}{B-1}$$ for $r_1, r_2= 1,\ldots,B$ , $l= l_1,\ldots, l_s$, and $t\in \mathbb{R}$.
	Therefore, for $\epsilon > 0$, there exists $B$ such that $$\big|t_r-\min\big\{1-F_{l_1}\big(T_{\psi_{l_1}}^{\bs G^{(r)}}\big), \ldots, 1-F_{l_s}\big(T_{\psi_{l_s}}^{\bs G^{(r)}}\big)\big\} \big| \leq \epsilon $$
	almost surely for $r= 1,\ldots,B$. And we can notice that $$\frac{1}{B}\sum_{r=1}^B\mathds{1}{\{t_r<t\}}=\frac{1}{B}\sum_{r=1}^B\mathds{1}\big\{\min\big\{1-F_{l_1}\big(T_{\psi_{l_1}}^{\bs G^{(r)}}\big), \ldots, 1-F_{l_s}\big(T_{\psi_{l_s}}^{\bs G^{(r)}}\big)\big\}<t\big\}$$
	almost surely for $t\in \mathbb{R}$ and sufficiently small $\epsilon$. Besides, applying strong law of large numbers, we have $$\lim_{B\rightarrow \infty}\frac{1}{B}\sum_{r=1}^B\mathds{1}\big\{\min\big\{1-F_{l_1}\big(T_{\psi_{l_1}}^{\bs G^{(r)}}\big), \ldots, 1-F_{l_s}\big(T_{\psi_{l_s}}^{\bs G^{(r)}}\big)\big\}<t\big\}=\mathbb{P}\big(T_F^{\bs G} < t \mid \bs{X}_{1:n} \big)$$ alomst surely for $t\in \mathbb{R}$. Thus, we completes the proof of the second step.
\rulex
	\vspace{2ex}
	\section*{Acknowledgments}
    We thank Jake Carlson of the Department of Economics at Harvard University for his helpful comments.
	The authors are in an alphabetic order. Qizhai Li is partially supported by
	National Nature Science Foundation of China (12325110,12288201) and the robotic AI-Scientist platform of Chinese Academy of Sciences.
	Liuquan Sun is supported by the National Natural Science Foundation of China (No.12171463).
	\renewcommand\bibname{References}
	
\end{document}